\documentclass[11pt]{amsart}
\usepackage{amsmath,amssymb,epsfig,color,mathrsfs}
\usepackage{wrapfig,lipsum,sidecap}
\usepackage{graphicx}
\usepackage{multirow}
\usepackage{hhline}
\usepackage{hyperref}
\usepackage{enumitem}
\setlist[enumerate]{leftmargin=.5in}
\setlist[itemize]{leftmargin=.5in}

\newtheorem{theorem}{Theorem}[section]

\newtheorem{corollary}[theorem]{Corollary}
\newtheorem{definition}[theorem]{Definition}
\newtheorem{lemma}[theorem]{Lemma}
\newtheorem{proposition}[theorem]{Proposition}
\newtheorem{remark}[theorem]{Remark}
\newtheorem{example}[theorem]{Example}

\newcommand{\vanish}[1]{}\parskip=12pt

\newcommand{\0}{\widehat{0}}

\newcommand{\ve}{\varepsilon}
\newcommand{\sign}{\operatorname{sign}}

\def\p{\prime}

\numberwithin{equation}{section}

\begin{document}
\title{New formulas for the Jones polynomial of a rational link}
\author{Yuanan Diao and G\'abor Hetyei}
\address{Department of Mathematics and Statistics\\
University of North Carolina Charlotte\\
Charlotte, NC 28223}
\email{}
\subjclass[2010]{Primary: 57M25; Secondary: 57M27}
\keywords{continued fractions, knots, links, alternating, Tait graph.}
\date{\today}

\begin{abstract}
We derive new formulas for the Jones polynomial and the Kauffman bracket
polynomial of a rational link represented by a standard diagram that is not necessarily
alternating. These formulas generalize the results of Qazaqzeh,
Yasein, and Abu-Qamar for the Tutte polynomial of the Tait graph of an
alternating diagram of a rational link, as well as the matrix formulas of
Lawrence and Rosenstein for the Jones polynomial of a rational link.
Our approach uses the colored version of Brylawski's
tensor product formula for Tutte polynomials of colored graphs, due to
Diao, Hetyei, and Hinson. Furthermore, generalizing the formulas of Qazaqzeh,
Yasein, and Abu-Qamar, we present a finite automaton that computes the
crossing signs, thereby enabling the calculation of the writhe of a
standard diagram of a rational link.

\end{abstract}  

\maketitle

\vspace{-0.1in}
\section*{Introduction}

There is a sizable literature devoted to the study of the Jones
polynomials of oriented rational links, relying on at least three
fundamentally different approaches. The first approach
uses the fact that the Jones polynomial may be obtained by substitution
into a more general invariant, such as the HOMFLY
polynomial. Results on the HOMFLY polynomials of oriented rational links
abound~\cite{DEH,Du,Li-Mi}. The second approach is to use the
skein relations of the Jones polynomial and related polynomials
directly. This method is used in~\cite{Kan} to prove that infinitely many
rational knots have the same Jones polynomial (see also
\cite{Sto}). The third
approach is to compute the Kauffman bracket polynomial~\cite{La-Ro,LZ,LLS,Q},
from which the Jones polynomial may be obtained after computing the
writhe of the oriented rational link diagram. Such computations may be
recursive~\cite{LZ,LLS} or may rely on the fact that the Kauffman bracket
polynomial of an alternating link diagram is closely related to the
Tutte polynomial of its Tait graph~\cite{Q}.

The present work generalizes the approach of Qazaqzeh,
Yasein, and Abu-Qamar~\cite{Q} to rational link
diagrams that need not be alternating and in which the number of
twist boxes is not necessarily odd. Our formulas differ even
when specialized to the case covered in~\cite[Theorem 3.4]{Q},
which concerns the Tutte polynomial of the Tait
graph of an alternating link with an odd number of twist boxes.
Computing the signed Tutte polynomial using our results
requires a summation of $F_n$ similar products, where $F_n$ denotes the
$n$th Fibonacci number, which is asymptotically equal to $\phi^n$, where
$\phi\approx 1.618$ is the golden ratio. Using our formulas is thus
comparable in efficiency to computing the HOMFLY polynomial via the
classical result of Lickorish and Millett~\cite{Li-Mi}, which, however, requires
representing a rational link so that each twist box contains an even number of crossings. As noted in the concluding remarks,
a recent translation of the Lickorish--Millett result to alternating
rational link diagrams was published in~\cite{DEH}, but that formula
cannot be generalized to arbitrary non-alternating link diagrams.

The key result enabling the derivation of our formulas is the
generalization of Brylawski's tensor product formula to colored graphs,
established in~\cite{DHH}. A tensor product of a graph $G$ with a pointed
graph $\widehat{N}$ (containing a distinguished edge $e$) is obtained by
replacing each edge of $G$ with a copy of $\widehat{N}-e$ in
such a way that the original edge of $G$ is identified with the deleted
distinguished edge $e$. In the colored setting, different
edges may be replaced with different pointed graphs. In our work we apply this construction to the
case where the substituted graphs are augmented paths and their
duals. The inverse operations
were called series-parallel reductions in the work of
Traldi~\cite{Tr}. Generalizing the observations made in~\cite{Q} from
the non-colored setting, in
Section~\ref{sec:Tait} we show that the Tait graph of any unoriented
rational link diagram in standard form is a colored tensor
product of a core graph with augmented path graphs and their
duals. Section~\ref{sec:path} contains the computation of the relevant
colored Tutte invariants of the augmented path graphs and their duals,
while the colored Tutte polynomials of the core graphs are computed in
Section~\ref{sec:core}. In the important special case where division by
our colored variables is allowed, we present two variants in
Section~\ref{sec:core2}. The first variant connects our colored
Tutte polynomial computations to ordinary continued fractions and
leads to an exact evaluation of the Jones polynomial at $t=-1$ in
Section~\ref{sec:subs}. The second reformulation generalizes the main
formula of Lawrence and Rosenstein~\cite[Theorem 4.2]{La-Ro}. This is a
matrix formula in which each matrix factor is associated with a single
twist box, and the size and twisting sign of that twist box determine
the factor. In our generalization, each matrix factor is associated with
a pointed graph replacing a single edge of the core graph, and the core
graph alone determines the corresponding matrix factor.

Our main formulas for the Kauffman bracket of an unoriented rational
link diagram appear in Section~\ref{sec:Kbracket}. These are
obtained by replacing our colored variables with the appropriate power
of the variable $A=t^{-1/4}$, where $t$ is the variable of the Jones polynomial. When presenting our variants of the
Lawrence--Rosenstein formulas in Section~\ref{sec:LR}, we use
the fact that $d=-A^2-A^{-2}$ is nonzero. This makes the substitution
$t=-1$ into the Jones polynomial a particularly interesting special case, since
this substitution is essentially equivalent to setting $d=0$. The
absolute value of the Jones polynomial evaluated at $t=-1$ is well
known. Using our results, we derive in Section~\ref{sec:subs} a formula
for the exact value of this substitution. Variants
of the formulas in~\cite{La-Ro} are derived in
Section~\ref{sec:LR}. In particular, we provide a combinatorial formula
that facilitates computation of the Jones polynomial.

Our general formulas also apply in the special case where the
rational link diagram is alternating and the ordinary Tutte polynomial
of its (unsigned) Tait graph may be used to compute its Kauffman
bracket. In Section~\ref{sec:Taitalt} we provide direct Tutte polynomial
formulas for this important case. Our results not only
generalize those in~\cite{Q}, but also suggest a different method
for computing the Tutte polynomial even in the special case of an odd
number of twist boxes considered in~\cite{Q}. We further obtain a
variant of the Lawrence--Rosenstein formulas in this setting: a matrix
product formula in which each matrix depends only on the size of the
corresponding twist box.

To compute the Jones polynomial from the
Kauffman bracket, we must also determine the writhe. One method is presented in~\cite{Q}. In Section~\ref{sec:writhe} we introduce
another approach, inspired by the finite automaton method used
in~\cite{DEH}. Our approach applies not only to
alternating links: it generalizes the key formulas stated in~\cite{Q}
and provides complete proofs in cases that were only stated
in~\cite{Q} to avoid lengthy but analogous verifications.

In the final Section~\ref{sec:concl}, we outline
a possible alternative approach to computing the Jones polynomial of
an alternating link, based on the results of~\cite{DEH}, and present
directions for future research.

\section{Preliminaries}

\subsection{Rational links}

We will use the notation and terminology of~\cite{DEH}.
An unoriented rational link may be presented by a diagram of the form shown
in Figure~\ref{fig:rationallink}; such a diagram is called a \emph{4-plat}.
See Figure~\ref{oddlinktotait} for a concrete example. An unoriented rational
link can be encoded by a continued fraction
$p/q=[0,a_1,a_2,\ldots,a_n]$ where $a_1\cdots a_n\neq 0$, namely
\[
\frac{p}{q}=\cfrac{1}{a_1+\cfrac{1}{a_{2}+\ddots+\cfrac{1}{a_{n-1}+\cfrac{1}{a_n}}}}.
\]
The integers $|a_1|,\ldots,|a_n|$ record the numbers of consecutive half-turn
twists in the twist boxes $B_1,\ldots,B_n$, following the \emph{twist sign}
convention in Figure~\ref{fig:rationallink}.

\begin{figure}[h]
\begin{center}
\scalebox{0.9}{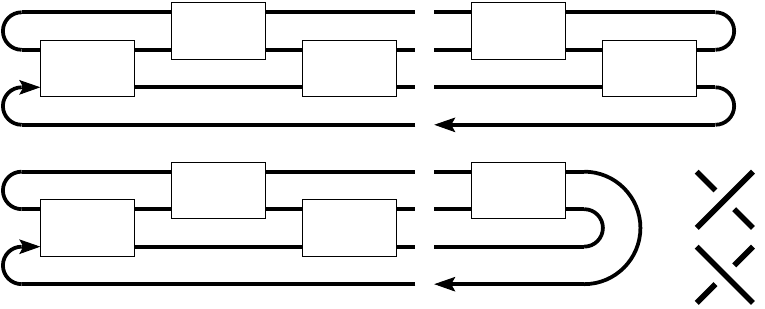}
\end{center}
\caption{The twist sign convention used to define the standard form of an
  unoriented rational link; the role of the arrow will be explained in
  Section~\ref{sec:writhe}.}
\label{fig:rationallink}
\end{figure}

It is an immediate consequence of the definitions that the twist sign
$\sign_t(a_i)$ of each of the $|a_i|$ crossings in twist box $B_i$ is
\begin{equation}
\label{eq:tsign}
\sign_t(a_i)=(-1)^{i-1}\sign(a_i).
\end{equation}
We call such a diagram a \emph{standard diagram} of the unoriented rational
link. The left end of the diagram is fixed, while the closing on the right
depends on the parity of $n$, as indicated in Figure~\ref{fig:rationallink}.
A rational link diagram in standard form is not necessarily alternating.
Indeed, $p/q=[0,a_1,a_2,\ldots,a_n]$ encodes an alternating link diagram if and
only if the continued fraction expansion is in \emph{non-alternating
denominator form}, i.e., all $a_i$ have the same sign. In this special case,
\eqref{eq:tsign} simplifies to
\begin{equation}
\label{eq:tsignalt}
\sign_t(a_i)=(-1)^{i-1}\sign_t(a_1).
\end{equation}
It is known that every rational link admits an alternating diagram;
see~\cite{BZ,Cromwell}.

\begin{remark}{\em
In some literature, the rational numbers used to classify rational links are
the reciprocals of those used in this paper. That is, one uses $q/p$ with a
continued fraction decomposition of the form
\[
\frac{q}{p}=a_1+\cfrac{1}{a_2+\cfrac{1}{a_{3}+\ddots+\cfrac{1}{a_{n-1}+\cfrac{1}{a_n}}}}.
\]
In this paper we follow the convention of Duzhin and
Shkolnikov~\cite{Du}.
}
\end{remark}

The Jones polynomial is an invariant of \emph{oriented} links. Thus we will
eventually impose an orientation on the diagram, and crossings will acquire a
\emph{crossing sign} as in the convention shown on the right-hand side of
Figure~\ref{channel} in Subsection~\ref{sec:TJ}. If the rational link has one
component, then the orientation of the strand in the top-left corner of a
standard diagram is determined by the link and the chosen diagram (and is not
free to choose). If the rational link has two components, then the top-left
strand and the bottom strand belong to different components, so there are two
choices for the orientation of the top-left strand: left-to-right or
right-to-left. These two choices typically yield two distinct oriented
rational links. 

\vanish{
For ease of reference, we record the following theorems of
Schubert~\cite{Schubert}, which classify unoriented and oriented rational links
in terms of their associated rational numbers.

\begin{theorem}{\em \cite{Schubert}}\label{SchubertTheorem1}
Let $\mathcal{L}(p/q)$ and $\mathcal{L}(p^\p/q^\p)$ be two unoriented rational
links defined by rational numbers $p/q$ and $p^\p/q^\p$. Then they are
topologically equivalent if and only if (i) $q=q^\p$ and (ii) either
$p\equiv p^\p \pmod{q}$ or $p\cdot p^\p\equiv 1 \pmod{q}$.
\end{theorem}

\begin{theorem}{\em \cite{Schubert}}\label{SchubertTheorem2}
Let $\mathcal{L}(p/q)$ and $\mathcal{L}(p^\p/q^\p)$ be two (orientation
compatible) oriented rational links defined by rational numbers $p/q$ and
$p^\p/q^\p$ such that $p$ and $p^\p$ are both odd. Then they are
topologically equivalent as oriented links if and only if (i) $q=q^\p$ and
(ii) either $p\equiv p^\p \pmod{2q}$ or $p\cdot p^\p\equiv 1 \pmod{2q}$.
\end{theorem}

We turn an unoriented link diagram in standard form into an oriented link
diagram in {\em preferred standard form} as follows:
\begin{enumerate}
\item We orient the lowest strand in the link diagram right to left
  (indicated with a solid arrowhead in Figure~\ref{fig:rationallink}).
\item If the link has a second component, we orient the strand in the
  top left corner left to right (indicated  with a hollow arrowhead in
  Figure~\ref{fig:rationallink}). 
\end{enumerate} 
It is well known that a rational link diagram has at most two 
components. }

\subsection{Colored Tutte polynomials}

A classification of Tutte invariants of colored graphs was first
developed by Zaslavsky~\cite{Z} and later generalized by Bollob\'as and
Riordan~\cite{BR}. A unified treatment of these approaches appears in
the work of Ellis-Monaghan and Traldi~\cite{ET}. 
Throughout this paper we follow the notation and terminology of~\cite{BR}.

Let $G$ be a connected graph with $n$ edges, bijectively labeled by $\{1,2,\ldots,n\}$. Assume that each edge is
assigned a color $\lambda$ from a color set $\Lambda$.
For each $\lambda\in\Lambda$ we introduce variables
$X_\lambda,\; x_\lambda,\; Y_\lambda,\; y_\lambda$
and consider the polynomial ring
\[
\mathbb Z[\Lambda]
=
\mathbb Z\!\left[
X_\lambda, x_\lambda, Y_\lambda, y_\lambda
\;:\;
\lambda\in\Lambda
\right].
\]

Let $T$ be a spanning tree of $G$.
An edge $e\in T$ is called \emph{internally active} if for every
edge $f\neq e$ in $G$ such that $(T-e)\cup f$ is a spanning tree,
the label of $e$ is smaller than the label of $f$.
Otherwise $e$ is \emph{internally inactive}.
We assign weight $X_\lambda$ (respectively $x_\lambda$) to each
internally active (respectively inactive) edge of color $\lambda$.
An edge $f\in G-T$ is \emph{externally active} if $f$ has the smallest
label among the edges in the unique cycle contained in $T\cup f$.
Otherwise $f$ is \emph{externally inactive}.
We assign weight $Y_\lambda$ (respectively $y_\lambda$) to each
externally active (respectively inactive) edge of color $\lambda$.

The weight $w(T)$ of a spanning tree $T$ is the product of the
weights of its edges.
The \emph{colored Tutte polynomial} $T(G)$ is defined as
\[
T(G)=\sum_{T} w(T),
\]
where the sum is taken over all spanning trees of $G$.

\begin{theorem}[Bollob\'as--Riordan]
\label{T_BR}
Let $I$ be an ideal of $\mathbb Z[\Lambda]$.
The image of $T(G)$ in $\mathbb Z[\Lambda]/I$ is independent of the
edge labeling of $G$ if and only if, for all
$\lambda,\mu,\nu\in\Lambda$, the differences
\[
\det\!\begin{bmatrix}
X_\lambda & y_\lambda\\
X_\mu & y_\mu
\end{bmatrix}
-
\det\!\begin{bmatrix}
x_\lambda & Y_\lambda\\
x_\mu & Y_\mu
\end{bmatrix},
\]
\[
Y_\nu
\det\!\begin{bmatrix}
x_\lambda & Y_\lambda\\
x_\mu & Y_\mu
\end{bmatrix}
-
Y_\nu
\det\!\begin{bmatrix}
x_\lambda & y_\lambda\\
x_\mu & y_\mu
\end{bmatrix},
\]
and
\[
X_\nu
\det\!\begin{bmatrix}
x_\lambda & Y_\lambda\\
x_\mu & Y_\mu
\end{bmatrix}
-
X_\nu
\det\!\begin{bmatrix}
x_\lambda & y_\lambda\\
x_\mu & y_\mu
\end{bmatrix}
\]
belong to $I$.
\end{theorem}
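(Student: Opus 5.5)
The theorem is an if-and-only-if, so I would prove the two directions separately, the first by minimal counterexamples and the second by an induction that reduces everything to swapping two adjacent labels.

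\emph{Necessity.} I would choose small graphs on which two labelings give different polynomials, and show that each difference is exactly one of the listed generators. Since only two or three edges are involved, every edge carries its own color and the computation is explicit.

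\begin{itemize}
\item For the first generator, take a triangle or a pair of parallel edges, colored $\lambda,\mu$, possibly with a pendant edge. Comparing the two labelings gives a difference equal to
\[
\det\begin{bmatrix}X_\lambda&y_\lambda\\X_\mu&y_\mu\end{bmatrix}-\det\begin{bmatrix}x_\lambda&Y_\lambda\\x_\mu&Y_\mu\end{bmatrix}.
\]
\item The generators carrying a factor $Y_\nu$ or $X_\nu$ should come from three-edge configurations. A third edge of color $\nu$ placed in parallel supplies the $Y_\nu$ factor. A third edge in series supplies the $X_\nu$ factor, playing the role of an internally active edge.
\end{itemize}

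I would match each labeling-dependence to one generator by direct enumeration of spanning trees. Once these witnesses exist, if $T(G)$ is labeling-independent modulo $I$ for every $G$, each listed difference must lie in $I$.

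\emph{Sufficiency.} Any two labelings are connected by adjacent transpositions, so it suffices to show invariance when the labels $k$ and $k+1$ of two edges $e,f$ are swapped. I would run an induction on the number of edges using deletion–contraction with respect to the largest label. Edges whose labels exceed $k+1$ can be deleted or contracted, or classified as bridges and loops. The activity of the remaining edges relative to each other does not depend on these larger edges, so $T(G)$ splits into a sum of terms of the form (weight) $\times$ $T(G')$ for minors $G'$. There $e,f$ still carry the top labels among the relevant edges, and by induction one may assume $e,f$ are the last two edges in the order.

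Then:

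\begin{enumerate}
\item Expand with respect to $e$ and $f$ in $G$. The contribution splits by the local structure of $\{e,f\}$ in the minor: whether each is a bridge or a loop after deleting or contracting the other, and whether $e,f$ are in series or in parallel.
\item The cases where swapping does not change activities cancel trivially.
\item The genuine cases are a series pair and a parallel pair. There the swapped expansions differ by
\[
\det\begin{bmatrix}X_\lambda&y_\lambda\\X_\mu&y_\mu\end{bmatrix}-\det\begin{bmatrix}x_\lambda&Y_\lambda\\x_\mu&Y_\mu\end{bmatrix}
\]
times a minor's polynomial, together with similar differences involving the $y$-determinant.
\item Show that in these cases the minor's polynomial is divisible by some $X_\nu$ or $Y_\nu$, or is a sum of such terms, whenever the minor has at least one edge. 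This is why the extra generators carry the factors $X_\nu$ and $Y_\nu$.
\end{enumerate}

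\emph{Main obstacle.} The hard part is the bookkeeping in steps 3 and 4: showing that every nonvanishing discrepancy is an ideal element times a polynomial in which each monomial contains an active-edge variable $X_\nu$ or $Y_\nu$. Every spanning tree has a first-labeled edge, which is always active, so each monomial contains some $X_\nu$ or $Y_\nu$. I would try to exploit this first to supply the needed factor.
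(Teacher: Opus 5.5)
The paper does not prove this statement; it quotes it from Bollob\'as and Riordan. Your strategy is the standard one and works: adjacent transpositions, deletion--contraction on the larger labels to reduce to the case where the swapped edges $e,f$ carry the two largest labels, a case analysis of $\{e,f\}$, and small witness graphs for necessity. Several claims need correcting, though.

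Write $D_{Xy}=X_\lambda y_\mu-X_\mu y_\lambda$, $D_{xY}=x_\lambda Y_\mu-x_\mu Y_\lambda$ and $D_{xy}=x_\lambda y_\mu-x_\mu y_\lambda$. The listed generators are then $D_{Xy}-D_{xY}$, $Y_\nu(D_{xY}-D_{xy})$ and $X_\nu(D_{xY}-D_{xy})$.

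\emph{Necessity.}
\begin{itemize}
\item For the first generator only the bare pair of parallel edges works. It gives $X_\lambda y_\mu+x_\mu Y_\lambda$ versus $X_\mu y_\lambda+x_\lambda Y_\mu$, whose difference is exactly $D_{Xy}-D_{xY}$.
\item Every spanning-tree weight of an $m$-edge graph has degree $m$. So a pendant bridge (which just multiplies by $X_\nu$) or a triangle produces only degree-$3$ differences. Neither can give this degree-$2$ generator.
\item The triangle, with its third edge of colour $\nu$ labelled first, gives $X_\nu(D_{Xy}-D_{xy})$. This is the third generator plus $X_\nu(D_{Xy}-D_{xY})$. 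So the third generator follows only once the first is known to lie in $I$, not from a single witness.
\item Three parallel edges, with the third edge labelled first, give exactly $Y_\nu(D_{xy}-D_{xY})$.
\end{itemize}

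\emph{Sufficiency.} The factorization you assert in step 3 silently uses $T(G/e-f)=T(G/f-e)$ for the inherited labelling. This holds because exchanging a parallel pair, or a series pair, is an automorphism of the cycle matroid fixing all other edges. The activity polynomial depends only on the labelled coloured matroid.

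Call this common value $A$. There are three genuine cases, not two:
\begin{itemize}
\item $e,f$ parallel but not in series: the difference is $(D_{xy}-D_{xY})A$.
\item $e,f$ in series but not parallel: the difference is $(D_{Xy}-D_{xy})A$.
\item $\{e,f\}$ both a $2$-cycle and a $2$-edge cut: the difference is exactly $(D_{Xy}-D_{xY})A$.
\end{itemize}

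Your smallest-label observation is the right way to supply a factor $X_\nu$ or $Y_\nu$ in the first two cases. But it requires $G/e-f$ to have an edge, and you must verify this:
\begin{itemize}
\item In the first case, $G-e-f$ contains a path joining the ends of $e$.
\item In the second case, every cycle through $e$ passes through $f$ and has length at least $3$.
\end{itemize}
In the third case $A$ may equal $1$. This is harmless precisely because no unmultiplied $D_{xY}-D_{xy}$ occurs there.

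With these points made explicit, the argument is complete.
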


As shown in~\cite[Lemma 5]{BR}, if $I$ satisfies the conditions of
Theorem~\ref{T_BR}, then $T(G)$ may be computed in
$\mathbb Z[\Lambda]/I$ using the following deletion--contraction recurrence.
Let $e$ be an edge of $G$ of color $\lambda$. Then
\begin{equation}
\label{eq:TGrec}
T(G)=
\begin{cases}
X_\lambda\, T(G/e) & \text{if $e$ is a bridge},\\[4pt]
Y_\lambda\, T(G-e) & \text{if $e$ is a loop},\\[4pt]
x_\lambda\, T(G/e)+y_\lambda\, T(G-e)
& \text{otherwise}.
\end{cases}
\end{equation}

Following~\cite{DHH2}, we regard the colored Tutte polynomial as an
element of $\mathbb Z[\Lambda]/P$, where $P$ is a prime ideal
containing the ideal $I_1$ generated by the polynomials
\[
\det\!\begin{bmatrix}
X_\lambda & y_\lambda\\
X_\mu & y_\mu
\end{bmatrix}
-
\det\!\begin{bmatrix}
x_\lambda & y_\lambda\\
x_\mu & y_\mu
\end{bmatrix}
\ {\rm{and}}\ 
\det\!\begin{bmatrix}
x_\lambda & y_\lambda\\
x_\mu & y_\mu
\end{bmatrix}
-
\det\!\begin{bmatrix}
x_\lambda & Y_\lambda\\
x_\mu & Y_\mu
\end{bmatrix}.
\]

As observed in~\cite{DHH2}, $I_1$ is a prime homogeneous ideal
containing the ideal $I_0$ generated by the differences in
Theorem~\ref{T_BR}.
Since $I_1$ is generated by homogeneous polynomials of degree~$2$,
the variables $X_\lambda,x_\lambda,Y_\lambda,y_\lambda$
represent nonzero congruence classes modulo $I_1$.
Assuming the same holds for $P$, then in the field of fractions
$K$ of $\mathbb Z[\Lambda]/P$, the relations 
\[
\det\!\begin{bmatrix}
X_\lambda & y_\lambda\\
X_\mu & y_\mu
\end{bmatrix}
-
\det\!\begin{bmatrix}
x_\lambda & y_\lambda\\
x_\mu & y_\mu
\end{bmatrix}=0,
\ {\rm{and}}\ 
\det\!\begin{bmatrix}
x_\lambda & y_\lambda\\
x_\mu & y_\mu
\end{bmatrix}
-
\det\!\begin{bmatrix}
x_\lambda & Y_\lambda\\
x_\mu & Y_\mu
\end{bmatrix}=0
\]
are equivalent to
\begin{align}
\frac{X_\lambda-x_\lambda}{y_\lambda}
&=
\frac{X_\mu-x_\mu}{y_\mu}
\label{eq:yconds}\ {\rm{and}}\\
\frac{Y_\lambda-y_\lambda}{x_\lambda}
&=
\frac{Y_\mu-y_\mu}{x_\mu},
\label{eq:xconds}
\end{align}
for all $\lambda,\mu\in\Lambda$.
Hence there exist $u,v\in K$ such that
\begin{align}
\frac{X_\lambda-x_\lambda}{y_\lambda}&=u,
\label{eq:ycondsu}\\
\frac{Y_\lambda-y_\lambda}{x_\lambda}&=v,
\label{eq:xcondsv}
\end{align}
for all $\lambda\in\Lambda$.
Equivalently,
\begin{align}
X_\lambda&=x_\lambda+y_\lambda u,
\label{eq:ycondu}\\
Y_\lambda&=x_\lambda v+y_\lambda.
\label{eq:xcondv}
\end{align}

Thus the parameters $X_\lambda$ and $Y_\lambda$
may be expressed in terms of
$x_\lambda$, $y_\lambda$, $u$, and $v$,
reducing the number of independent parameters from
$4|\Lambda|$ to $2|\Lambda|+2$.
In Zaslavsky’s terminology~\cite{Z},
such Tutte polynomials are called \emph{normal functions}.
His notation is recovered by replacing
$X_\lambda$ with $x_e$,
$Y_\lambda$ with $y_e$,
$x_\lambda$ with $b_e$,
and $y_\lambda$ with $a_e$.

The main tool in our proofs will be the colored generalization of
Brylawski’s tensor product formula~\cite{DHH2}.

\begin{definition}
A \emph{pointed colored graph} $\widehat N$
is an undirected graph together with a distinguished edge
$e$ that is neither a loop nor a bridge.
All edges of $\widehat N-e$ are colored by elements of $\Lambda$.
\end{definition}

Next we recall the definition of the
colored versions of the {\em pointed Tutte polynomials
  $T_C(\widehat{N},e)$ and $T_L(\widehat{N},e)$}. The
original non-colored version of these polynomials was first
introduced by Brylawski~\cite{B}.

\begin{definition}\label{def:tctl}
Let $\widehat N$ be a pointed colored graph with distinguished edge $e$.
The polynomial $T_L(\widehat N,e)$ is obtained from the usual
colored Tutte polynomial of $\widehat N-e$, except that internally
active edges in cycles closed by $e$ are treated as internally inactive.
Similarly, $T_C(\widehat N,e)$ is obtained from the usual colored
Tutte polynomial of $\widehat N/e$, except that externally active edges
that would close a cycle containing $e$ are treated as externally inactive.
\end{definition}

It is shown in~\cite[Theorem~2]{DHH2} that
$T_C(\widehat N,e)$ and $T_L(\widehat N,e)$ satisfy
\begin{align}
x_\lambda\bigl(T(\widehat N/e)-T_C(\widehat N,e)\bigr)
&=(Y_\lambda-y_\lambda)T_L(\widehat N,e),
\label{TLC1}\\
y_\lambda\bigl(T(\widehat N-e)-T_L(\widehat N,e)\bigr)
&=(X_\lambda-x_\lambda)T_C(\widehat N,e).
\label{TLC2}
\end{align}

\begin{definition}
Let $M$ be a colored graph and $\widehat N$ a pointed colored graph
with distinguished edge $e$.
Fix $\lambda\in\Lambda$.
The \emph{$\lambda$-tensor product}
$M\otimes_\lambda \widehat N$
is the colored graph obtained by replacing each $\lambda$-colored edge of $M$ with a copy of $\widehat N-e$ and
identifying the pointed edge $e$ with the replaced edge.
\end{definition}

\begin{theorem}[Diao--Hetyei--Hinson~\cite{DHH2}]
\label{Ttensor}
The polynomial $T(M\otimes_\lambda \widehat N)$
is obtained from $T(M)$ by the substitutions
\[
X_\lambda\mapsto T(\widehat N-e),\quad
x_\lambda\mapsto T_L(\widehat N,e),\quad
Y_\lambda\mapsto T(\widehat N/e),\quad
y_\lambda\mapsto T_C(\widehat N,e),
\]
while leaving all variables of color $\mu\neq\lambda$ unchanged.
\end{theorem}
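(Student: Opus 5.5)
We prove Theorem~\ref{Ttensor} by replacing the $\lambda$-colored edges of $M$ one at a time, inducting on their number. Since the substitution acts only on the variables of color $\lambda$, it is enough to prove one statement. Let $f$ be a single $\lambda$-colored edge of a colored graph $G$, and let $G_f$ be the graph obtained by replacing $f$ with a copy of $\widehat N-e$. Then $T(G_f)$ should arise from the \emph{one-edge} expansion of $T(G)$ along $f$ given by \eqref{eq:TGrec}, after substituting for the variables attached to $f$. Since Theorem~\ref{T_BR} and \cite[Lemma 5]{BR} let us expand along any edge first, the single-edge claim takes the following form:
\[
T(G_f)=\begin{cases}
T(\widehat N-e)\,T(G/f) & f \text{ a bridge of } G,\\
T(\widehat N/e)\,T(G-f) & f \text{ a loop of } G,\\
T_L(\widehat N,e)\,T(G/f)+T_C(\widehat N,e)\,T(G-f) & \text{otherwise}.
\end{cases}
\]
Iterating this over all $\lambda$-edges, with the remaining copies untouched in $G/f$ and $G-f$, gives the theorem. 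Ordering the $\lambda$-edges and using multilinearity of the spanning-tree expansion in each edge's four variables makes the iteration routine.

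\emph{Bridge and loop cases.} If $f$ is a bridge, $G_f$ is a one-point join of $G/f$ (split at the merged vertex) with $\widehat N-e$. The colored Tutte polynomial is multiplicative over one-point joins, since the spanning trees and activities factor. So $T(G_f)=T(\widehat N-e)\,T(G/f)$. If $f$ is a loop, the two attachment vertices coincide, so the copy of $\widehat N-e$ becomes $\widehat N/e$ glued at a cut vertex. Multiplicativity again gives $T(\widehat N/e)\,T(G-f)$.

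\emph{Generic case.} We use the spanning-tree definition with a labeling in which the edges of the copy of $\widehat N-e$ receive the \emph{largest} labels. A spanning tree $T$ of $G_f$ restricts to the copy in one of two ways.
\begin{itemize}
\item The restriction is a spanning tree of $\widehat N-e$, connecting the two endpoints of $f$ inside the copy. Then $T$ corresponds to a spanning tree of $G/f$ together with a spanning tree of $\widehat N-e$.
\item The restriction is a spanning forest with exactly two components, separating the endpoints. Then it is a spanning tree of $\widehat N/e$, and $T$ corresponds to a spanning tree of $G-f$ containing a path between the endpoints.
\end{itemize}
Because the copy's labels are largest, the activities of edges outside the copy are computed exactly as in $G/f$, respectively $G-f$. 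Inside the copy, activity is lost precisely for edges whose fundamental cycle or cocycle passes through the outside path or cut. These are exactly the edges that Definition~\ref{def:tctl} declares inactive. So the first type contributes $T_L(\widehat N,e)\,T(G/f)$ and the second contributes $T_C(\widehat N,e)\,T(G-f)$.

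\emph{Main obstacle.} The delicate step is the activity bookkeeping in the generic case. An internally active edge $g$ of a spanning tree of $\widehat N-e$ may fail to be active in $G_f$ because it can be exchanged with an outside edge lying on a path closing the cycle through $e$. Dually, an externally active edge of $\widehat N/e$ may close a cycle through the outside. The difficulty is that the spanning trees of $G-f$ split further, according to whether the endpoints are connected in the \emph{outside} part as well. In that event the corrections are not literally the ones in Definition~\ref{def:tctl}.

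To handle this we invoke \eqref{TLC1} and \eqref{TLC2}, together with the normal-function relations \eqref{eq:ycondu} and \eqref{eq:xcondv} holding in $\mathbb Z[\Lambda]/P$. These relations let us trade the terms $T(\widehat N/e)$ and $T(\widehat N-e)$ for $T_C$ and $T_L$ plus multiples of $Y_\lambda-y_\lambda$ and $X_\lambda-x_\lambda$. We then regroup, using that $T(G/f)$ and $T(G-f)$ themselves satisfy \eqref{eq:TGrec}. If the direct regrouping becomes messy, the fallback is a second induction on $|E(\widehat N-e)|$: apply \eqref{eq:TGrec} to an edge $g$ of the copy, and check that $T_L$ and $T_C$ obey the same deletion–contraction recurrences along $g$. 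Those recurrences are immediate from Definition~\ref{def:tctl}. The bridge and loop subcases for $g$, including those where $g$ is a bridge in $G_f$ but not in $\widehat N$, are reconciled using \eqref{TLC1} and \eqref{TLC2}.
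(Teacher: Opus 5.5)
The paper does not prove this theorem; it quotes it from \cite{DHH2}, so your argument has to stand on its own. Its skeleton is sound. Your generic-case computation, with the copy of $\widehat N-e$ carrying the largest labels, is in fact already complete. The problem is your ``Main obstacle'' paragraph, which asserts that this computation fails, and that assertion is false.

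Write $a,b$ for the ends of $f$. For a spanning tree of $G_f$, let $F$ be its part inside the copy and $F'$ its part outside. No further splitting occurs:
\begin{itemize}
\item If $F$ is a spanning tree of $\widehat N-e$, then $F'$ is a forest of $G-f$ with exactly two components, one containing $a$ and one containing $b$.
\item If $F$ has two components, then $F'$ is a spanning tree of $G-f$, so it joins $a$ to $b$.
\end{itemize}
The corrections then agree exactly with Definition~\ref{def:tctl}, and this is precisely where the hypothesis that $f$ is neither a bridge nor a loop enters. You never used that hypothesis, which is probably why you doubted the argument.

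In the first case, take $h\in F$ on the $a$--$b$ path of $F$, i.e.\ on the cycle closed by $e$. Its fundamental cocycle in $G_f$ contains an edge of $G-f$ joining the two components of $F'$. Such an edge exists because $G-f$ is connected, $f$ not being a bridge. Outside labels are smaller, so $h$ is inactive. Every other cocycle or cycle of a copy edge lies inside the copy and agrees with the one in $\widehat N-e$.

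In the second case, take a copy edge $h\notin F$ whose ends lie in different components of $F$. Equivalently, its fundamental cycle in $\widehat N$ with respect to $F\cup e$ contains $e$. Its fundamental cycle in $G_f$ contains the $a$--$b$ path of $F'$, which is nonempty because $f$ is not a loop, so $h$ is externally inactive. All other cycles and cocycles of copy edges stay inside the copy and agree with those in $\widehat N/e$.

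So you should delete the appeal to \eqref{TLC1}, \eqref{TLC2} and the normal-function relations, and also the fallback induction on $|E(\widehat N-e)|$. As written, that fallback is only a sketch: it would also have to handle $\widehat N-g$ or $\widehat N/g$ ceasing to be a pointed graph. It is not needed.

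A smaller point concerns the reduction to one edge. In the colored setting all $\lambda$-edges share $X_\lambda,x_\lambda,Y_\lambda,y_\lambda$. So ``substituting for the variables attached to $f$'' and ``multilinearity in each edge's four variables'' presuppose a refinement of colors that you never set up. It is cleaner to stay in $\mathbb Z[\Lambda]$ until the end:
\begin{itemize}
\item Label $M$ so that its $\lambda$-edges $f_1,\dots,f_k$ come last.
\item Label $M\otimes_\lambda\widehat N$ so that the copy replacing $f_j$ occupies the $j$-th of the last $k$ blocks, each copy with the same internal order.
\end{itemize}
For a fixed labeling, deletion--contraction along the largest-labelled edge holds exactly. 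Your single-edge identity is also exact for this labeling, including the bridge and loop cases, where activities factor over one-point joins for every labeling. (For a bridge, $G_f$ is really a chain of three blocks $G_a$, copy, $G_b$, but multiplicativity still applies.) Now peel off $f_k,f_{k-1},\dots,f_1$ on both sides. Replacing the remaining edges by copies does not change whether $f_j$ is a bridge or a loop. This yields the substitution formula as a polynomial identity, and labeling independence modulo the ideal (Theorem~\ref{T_BR}) then gives the statement.
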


\subsection{Tait graph and the Jones polynomial}
\label{sec:TJ}

For a given link diagram $D$ with a checkerboard shading, a crossing can be assigned a $+$ or a $-$ sign relative to this shading as shown on the left side of Figure \ref{channel}. We shall call this sign the {\em shading sign} of the crossing as it is relative to the chosen checkerboard shading. The shading sign is not to be confused with the crossing sign with respect to the orientation of the link which is used in the definition of the writhe of $D$ as shown on the right side of Figure \ref{channel}. 

\begin{figure}[!hbt]
\begin{center}
\includegraphics[scale=0.8]{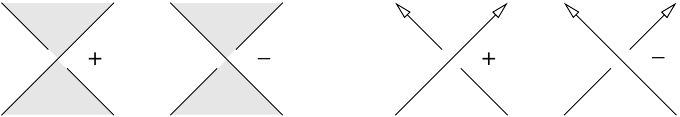}
\end{center}
\caption{Left: the shading sign with respect to the checkerboard shading; Right: the crossing sign with respect to the orientation of the link. 
\label{channel}}
\end{figure}

The Tait graph of a link diagram is obtained from a checkerboard shading of the diagram such that each dark region corresponds to a vertex and a crossing between two adjacent dark regions corresponds to an edge connecting the two corresponding vertices. A Tait graph is a signed graph with the sign of each
edge being the shading sign of the crossing corresponding to the edge. We shall denote the sign of an edge in the Tait graph by $\varepsilon$. We will use
Kauffman's results~\cite{K1,K2} expressing the Jones polynomial of a
link diagram in terms of the signed Tutte polynomial. Note that Kauffman
uses the weights $x_{\varepsilon}$ and $y_{\varepsilon}$ respectively instead
of $X_{\varepsilon}$ and $Y_{\varepsilon}$ respectively; and
he uses $A_{\varepsilon}$ and $B_{\varepsilon}$ respectively, instead 
of $x_{\varepsilon}$ and $y_{\varepsilon}$ respectively. Kauffman's
result may be rephrased as follows.

\begin{theorem}\cite{K2}
\label{T41}
Let $G$ be the signed Tait graph of a regular link projection
$D$ of $K$ as described above, then $T(G)$ equals the Kauffman
bracket polynomial $\langle D\rangle$ under the following variable
substitutions:
$$
X_{\varepsilon}\mapsto -A^{-3\varepsilon},\
Y_{\varepsilon}\mapsto -A^{3\varepsilon},\
x_{\varepsilon}\mapsto A^{\varepsilon},\ y_{\varepsilon}\mapsto
A^{-\varepsilon}.
$$
Furthermore, the Jones polynomial $V_K(t)$ of $K$ can be obtained
from
\begin{equation}\label{eq51}
V_K(t)=(-A^{-3})^{w(D)}\langle D\rangle
\end{equation}
by setting $A=t^{-\frac{1}{4}}$, where $w(D)$ is the writhe of the
projection $D$.
\end{theorem}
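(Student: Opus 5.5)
We plan to prove the first assertion by comparing two state-sum expansions. The second assertion, the Jones polynomial formula \eqref{eq51}, is Kauffman's normalization of the bracket, and we will only recall why it holds.

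\textbf{Step 1: the substitution is a normal function.} First we check that the substitution lands in a quotient where Theorem~\ref{T_BR} applies. Plugging in the given values, for either sign $\varepsilon=\pm1$ we get
\[
\frac{X_\varepsilon-x_\varepsilon}{y_\varepsilon}=\frac{-A^{-3\varepsilon}-A^{\varepsilon}}{A^{-\varepsilon}}=-A^{-2\varepsilon}-A^{2\varepsilon}=d,
\qquad
\frac{Y_\varepsilon-y_\varepsilon}{x_\varepsilon}=\frac{-A^{3\varepsilon}-A^{-\varepsilon}}{A^{\varepsilon}}=d .
\]
So \eqref{eq:ycondsu} and \eqref{eq:xcondsv} hold with $u=v=d$. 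Hence the image of $T(G)$ is independent of the edge labelling, and the recurrence \eqref{eq:TGrec} is available.

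\textbf{Step 2: the Tutte side as a subgraph expansion.} Iterating \eqref{eq:TGrec} edge by edge, with $X=x+yu$ and $Y=xv+y$, gives Zaslavsky's subgraph expansion of a normal function:
\[
T(G)=\sum_{S\subseteq E}\prod_{e\in S}x_{\varepsilon(e)}\prod_{e\notin S}y_{\varepsilon(e)}\;u^{k(S)-1}v^{|S|-|V|+k(S)} .
\]
Here $k(S)$ is the number of components of the spanning subgraph $(V,S)$. The easiest way to confirm this is that the right-hand side satisfies the same deletion–contraction recurrence: bridges and loops collapse to $X$ and $Y$ precisely because of \eqref{eq:ycondu} and \eqref{eq:xcondv}. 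With $u=v=d$ the monomial becomes $d^{\,2k(S)+|S|-|V|-1}$.

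\textbf{Step 3: the bracket side and the matching.} We expand $\langle D\rangle=\sum_s A^{a(s)}A^{-b(s)}d^{|s|-1}$ over Kauffman states. Each crossing is an edge of $G$, and we match a state to a subset $S$ as follows.
\begin{itemize}
\item For a crossing of shading sign $+$, the $A$-smoothing merges the two adjacent dark regions (contract, i.e.\ $e\in S$, weight $x_+=A$). The $B$-smoothing separates them ($e\notin S$, weight $y_+=A^{-1}$).
\item For shading sign $-$ the roles are exchanged, which matches $x_-=A^{-1}$ and $y_-=A$.
\end{itemize}
This is the step requiring care with the convention of Figure~\ref{channel}. It may force swapping $\varepsilon\leftrightarrow-\varepsilon$, and we fix the convention so that the stated substitution comes out.

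It remains to count loops in the state corresponding to $S$. The state loops are the boundaries of a ribbon neighbourhood of $(V,S)$ embedded in the sphere. Their number equals $k(S)$ plus the number of faces of $S$ not containing dark vertices, which by Euler's formula is $k(S)+\bigl(|S|-|V|+k(S)\bigr)$. This matches the exponent $d^{\,2k(S)+|S|-|V|-1}$ of Step 2 term by term. We expect this loop count to be the main obstacle, and would verify it by induction on $|S|$: adding an edge either joins two components or closes a cycle, and in each case it changes the loop count by $\mp1$ in agreement with the formula.

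\textbf{Step 4: the Jones polynomial.} For \eqref{eq:51} we recall the standard facts. The bracket is invariant under Reidemeister II and III moves, and is multiplied by $-A^{\pm3}$ under a Reidemeister I move. The writhe changes by $\pm1$ under a Reidemeister I move and is unchanged by the other two. Hence $(-A^{-3})^{w(D)}\langle D\rangle$ is an invariant of the oriented link, and with $A=t^{-1/4}$ it satisfies the Jones skein relation and normalization. Therefore it equals $V_K(t)$, as in~\cite{K1,K2}.
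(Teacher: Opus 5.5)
Your proposal is correct. The paper gives no proof of this statement: it quotes it from Kauffman~\cite{K2}. The only argument it records is the remark right after the theorem, that the kernel of the substitution is prime and that \eqref{eq:ycondu} and \eqref{eq:xcondv} hold with $u=v=d$, which is exactly your Step~1.

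Your Steps~2--3 supply the standard argument behind the citation. You take Zaslavsky's subgraph expansion of a normal function, specialize it at $u=v=d$, and match it term by term with the state sum using $|s|=|S|-|V|+2k(S)$. Your verification via \eqref{eq:TGrec} is right: for a bridge $e\notin S$ one has $k_G(S)=k_{G/e}(S)+1$, which produces the factor $yu$. Compared with citing~\cite{K2}, this gives a self-contained proof and explains why the same $d$ serves as both $u$ and $v$.

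A few points to tighten.

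\begin{itemize}
\item As literally stated, ``faces of $S$ not containing dark vertices'' is off by one. Every face of the plane graph $(V,S)$ avoids the vertices, and there are $|S|-|V|+k(S)+1$ of them. The clean count is that the $|s|$ state circles cut the sphere into $|s|+1$ regions. These are the $k(S)$ components of the dark ribbon surface together with the $|S|-|V|+k(S)+1$ faces of $(V,S)$, which gives your formula at once.
\item The ribbon-surface picture requires every dark region to be a disk, which holds for connected diagrams. The statement leaves this implicit as well. For split diagrams the identity can fail by a factor of $d$: for two nested circles with the annulus shaded, $T(G)=1$ while $\langle D\rangle=d$. The hypothesis does hold for the rational diagrams used in the paper.
\item In Step~3 the convention is not yours to choose. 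What must be checked is that Figure~\ref{channel} calls a crossing $+$ exactly when its $A$-smoothing joins the two dark corners. Under the opposite convention your argument yields the substitution with $\varepsilon$ replaced by $-\varepsilon$.
\end{itemize}

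Step~4 is the usual recollection of Kauffman's normalization, which is all the paper relies on too.
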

The kernel of the map given in Theorem~\ref{T41} is a prime ideal,
  not containing any of the variables
  $X_{\ve},x_{\ve},Y_{\ve},y_{\ve}$. Hence, in the quotient ring, the
  equations~\eqref{eq:ycondu}  and \eqref{eq:xcondv} hold, where both
  $u$ and $v$ equal to
\begin{equation}
\label{eq:d}    
d=\frac{-A^{-3\varepsilon}-A^{\varepsilon}}{A^{-\varepsilon}}=
\frac{-A^{3\varepsilon}-A^{-\varepsilon}}{A^{\varepsilon}}=-A^{-2}-A^2.
\end{equation}
In the special case of an alternating link diagram all edges of the Tait graph
have the same sign, w.l.o.g. we may assume all signs are positive. The
substitution rules stated in Theorem~\ref{T41} may be rewritten as
$$
A^{-1}X_{+}\mapsto -A^{-4},\
AY_{+}\mapsto -A^{4},\
A^{-1}x_{+}\mapsto 1,\ Ay_{+}\mapsto 1.
$$
The number of internal edges (active or inactive) is the same for each
spanning tree, and the same observation holds for the external edges. After
taking out an appropriate factor of $A$ we obtain the following consequence.
\begin{corollary}
\label{cor:T41}  
Let $K$ be an alternating link and $D$ an alternating diagram of $K$.
Let $G$ be the Tait graph of  $D$ as described above, having $v$ vertices
and $e$ edges. Then the Kauffman bracket of $K$ is
$$
\langle D\rangle = A^{2v-e-2} T(G;-A^{-4},-A^{4}).
$$
Here $T(G;X,Y)$ is the Tutte polynomial of the graph $G$.
\end{corollary}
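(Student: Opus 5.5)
We derive Corollary~\ref{cor:T41} from Theorem~\ref{T41} by pulling out a common power of $A$ from every spanning-tree weight.

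\emph{Setup.} Since $D$ is alternating, all edges of $G$ carry the same shading sign. Replacing the shading by its complement if necessary (or equivalently, replacing $D$ by its mirror image and tracking the sign change), we may assume every edge has sign $\varepsilon=+$. Since $G$ has a single color, $T(G)=\sum_T w(T)$ is a sum over spanning trees $T$ of monomials
\[
w(T)=X_+^{i(T)}\,x_+^{\,v-1-i(T)}\,Y_+^{j(T)}\,y_+^{\,e-v+1-j(T)} .
\]
Here $i(T)$ counts internally active edges and $j(T)$ counts externally active edges. Every spanning tree has exactly $v-1$ internal edges and $e-v+1$ external edges, because $G$ is connected (the diagram is connected, so its Tait graph is too).

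\emph{Substitution.} Under the substitution of Theorem~\ref{T41} with $\varepsilon=+$, we have $X_+=A\cdot(-A^{-4})$, $x_+=A\cdot 1$, $Y_+=A^{-1}(-A^{4})$ and $y_+=A^{-1}\cdot 1$. Each monomial therefore becomes
\[
A^{v-1}A^{-(e-v+1)}(-A^{-4})^{i(T)}(-A^{4})^{j(T)} = A^{2v-e-2}\,(-A^{-4})^{i(T)}(-A^{4})^{j(T)} .
\]
Summing over $T$ and using the spanning-tree definition of the ordinary Tutte polynomial $T(G;X,Y)=\sum_T X^{i(T)}Y^{j(T)}$ gives
\[
\langle D\rangle=A^{2v-e-2}\,T(G;-A^{-4},-A^{4}).
\]
Theorem~\ref{T_BR} guarantees that the spanning-tree expansion does not depend on the edge labeling. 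In the one-color case this is classical.

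\emph{Main obstacle.} The only delicate point is the sign normalization. Theorem~\ref{T41} is stated for the signed Tait graph attached to a chosen checkerboard shading. We must check that choosing the shading for which all edges are positive is legitimate, i.e.\ that Theorem~\ref{T41} holds for either shading. If this fails, the all-negative case is handled symmetrically: the same computation produces $A^{-(2v-e-2)}T(G;-A^{4},-A^{-4})$. This equals the stated formula once one passes to the dual Tait graph $G^*$, which has $v^*=e-v+2$ vertices and the same $e$ edges, using the standard duality $T(G^*;X,Y)=T(G;Y,X)$. Indeed $2v^*-e-2=e-2v+2=-(2v-e-2)$, so the exponent matches. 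This duality check is the step I would verify carefully. Everything else is routine bookkeeping of exponents.
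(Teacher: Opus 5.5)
Your proposal is correct and is the paper's argument: take all edges positive, write the substitutions of Theorem~\ref{T41} as $A^{-1}X_+\mapsto -A^{-4}$, $A^{-1}x_+\mapsto 1$, $AY_+\mapsto -A^{4}$, $Ay_+\mapsto 1$, and pull out $A^{v-1}A^{-(e-v+1)}=A^{2v-e-2}$ using that every spanning tree has $v-1$ internal and $e-v+1$ external edges. Your discussion of the sign normalization spells out the paper's ``without loss of generality all signs are positive'': the formula as stated is for the all-positive shading, and for the other shading it holds with $G$ replaced by its dual; your duality check of this is correct.
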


\section{The Tait graph of an unoriented rational link diagram}
\label{sec:Tait}

In this section we describe the structure of the Tait graph associated
to an unoriented rational link diagram. 
For alternating link diagrams, essentially the same description appears
in~\cite{Q}.

\begin{figure}[htbp]
\centering
\includegraphics[scale=0.8]{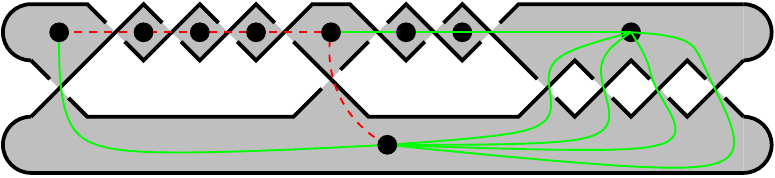}
\caption{An unoriented rational link diagram with an odd number of
twist boxes and its associated Tait graph.
\label{oddlinktotait}}
\end{figure}

\begin{figure}[htbp]
\centering
\includegraphics[scale=0.8]{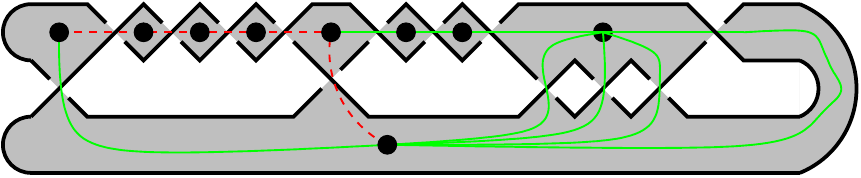}
\caption{An unoriented rational link diagram with an even number of
twist boxes and its associated Tait graph.
\label{evenlinktotait}}
\end{figure}

Consider an unoriented rational link diagram $D$ in standard form,
as in Figures~\ref{oddlinktotait} and~\ref{evenlinktotait}.
(The continued fractions corresponding to these diagrams are
$[0,1,-4,-1,3,4]$ and $[0,1,-4,-1,3,3,1]$, respectively.
Both evaluate to $49/40$, and hence the two diagrams represent the same
unoriented link.)

Choose a checkerboard coloring so that the bounded region
adjacent to the lowest strand is black, and let $G$ be the
corresponding Tait graph.
We call the vertex of $G$ corresponding to this region the
\emph{central vertex}.

The structure of $G$ may be described as follows.

First, the remaining vertices of $G$ fall naturally into two classes:
those adjacent to the central vertex, which we call
\emph{major vertices}, and those not adjacent to it, which we call
\emph{minor vertices}.

Second, the edges joining a major vertex to the central vertex
correspond precisely to the crossings in the odd-indexed twist boxes.

Third, each minor vertex has degree two and lies on a path joining two
major vertices. The edges along such a path correspond to the crossings
in an even-indexed twist box.

Finally, the shading sign of a crossing in $D$ agrees with its twist sign
when the crossing lies in an odd-indexed twist box, and is opposite to
its twist sign when it lies in an even-indexed twist box.
Consequently, all edges incident to the central vertex have the same sign,
and all edges along a path joining two major vertices also have the same sign.

These observations lead to the following lemma.

\begin{lemma}
\label{lem:tsign}
Let $D$ be the standard diagram corresponding to the continued fraction
$[0,a_1,a_2,\ldots,a_n]$, and let $G$ be its Tait graph.
Then the sign of an edge of $G$ corresponding to a crossing in the
$i$-th twist box is equal to $\sign(a_i)$.
\end{lemma}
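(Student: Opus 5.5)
The plan is to combine the twist-sign formula \eqref{eq:tsign} with the last structural observation stated just before the lemma. That observation says the shading sign of a crossing agrees with its twist sign in odd-indexed twist boxes and is opposite to it in even-indexed ones. Since the sign of an edge of $G$ is by definition the shading sign of the corresponding crossing, I will write the sign of an edge coming from a crossing in $B_i$ as
\[
\ve=(-1)^{i-1}\sign_t(a_i).
\]
Substituting $\sign_t(a_i)=(-1)^{i-1}\sign(a_i)$ from \eqref{eq:tsign} then gives $\ve=(-1)^{2(i-1)}\sign(a_i)=\sign(a_i)$, which is the claim.

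The real content is therefore the observation itself, and I would justify it by local pictures rather than take it for granted. I will fix the checkerboard shading so that the bounded region adjacent to the lowest strand is black. In the standard diagram, an odd-indexed box is twisted with its black regions above and below it, the central region being one of them. Its crossings therefore join the central vertex to a major vertex, which matches the second structural observation. An even-indexed box is rotated by a quarter turn relative to the odd ones, with its black regions to its left and right. Its crossings therefore form a path of minor vertices between two major vertices.

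For a single crossing, rotating the local picture by $90^\circ$ while keeping the crossing itself fixed swaps which opposite pair of quadrants is shaded. By the convention on the left of Figure~\ref{channel}, swapping the shaded pair reverses the shading sign. The twist-sign convention of Figure~\ref{fig:rationallink} is defined relative to the box orientation, and the factor $(-1)^{i-1}$ in \eqref{eq:tsign} records exactly this alternation. So I will check directly, for one positively twisted crossing in $B_1$, that its shading sign is $+$. For $B_2$ I will check that a crossing with $a_2>0$, which has twist sign $-$, has shading sign $+$ as well.

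I also need the shading to be consistent along each box, i.e. that all $|a_i|$ crossings in a box get the same shading sign. Along a twist box consecutive crossings share the same shaded pair of sides, so this propagates immediately, and the general case then follows from the two base checks. The main obstacle is purely the bookkeeping of the pictures: making sure the shading near each box really is of the claimed type, independent of $i$ and of the parity of $n$. The right-hand closing changes only which regions merge at the end, not the local shading pattern at the boxes. I would verify this against the examples in Figures~\ref{oddlinktotait} and~\ref{evenlinktotait}.
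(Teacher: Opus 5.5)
Your proposal is correct and is essentially the paper's argument: the paper obtains the lemma by combining the stated observation (the shading sign equals the twist sign in odd-indexed boxes and is opposite to it in even-indexed ones) with \eqref{eq:tsign}, giving $\ve=(-1)^{i-1}\sign_t(a_i)=\sign(a_i)$. The paper simply reads that observation off the figures, so your local-picture check is an optional supplement; two small corrections to it: the even boxes are not literally rotated (they twist strands $1,2$ instead of $2,3$, and that is what swaps the shaded pair of quadrants), and the factor $(-1)^{i-1}$ in \eqref{eq:tsign} is a separate convention of the standard form that cancels this swap rather than recording it.
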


See Figures~\ref{oddlinktotait} and~\ref{evenlinktotait}
for illustrations of this correspondence.
Lemma~\ref{lem:tsign} immediately implies the following.

\begin{corollary}
\label{cor:alternating}
If $D$ is an alternating rational link diagram, then all edges of its
Tait graph have the same sign $\varepsilon\in\{+,-\}$.
\end{corollary}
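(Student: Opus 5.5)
The statement to prove is Corollary~\ref{cor:alternating}. The plan is to combine Lemma~\ref{lem:tsign} with the characterization, recalled in the preliminaries, of when a standard diagram is alternating.

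First I would fix the setting. Let $D$ be the standard diagram of $[0,a_1,\ldots,a_n]$ with $a_1\cdots a_n\neq 0$, and let $G$ be the Tait graph obtained from the checkerboard shading chosen in Section~\ref{sec:Tait}.

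Next I would use the fact that $D$ is alternating if and only if the continued fraction is in non-alternating denominator form. Hence, when $D$ is alternating, all the $a_i$ share a common sign; call it $\varepsilon=\sign(a_1)\in\{+,-\}$. This equivalence was stated as known in the preliminaries, and I would take it as given.

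Every edge of $G$ corresponds to a crossing in some twist box $B_i$. By Lemma~\ref{lem:tsign}, its sign is $\sign(a_i)=\varepsilon$. So all edges of $G$ carry the same sign $\varepsilon$, which is the claim.

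The only point needing care is the phrase ``$D$ is an alternating rational link diagram.'' It should be read as meaning that the standard diagram $D$ itself alternates, so that the equivalence with non-alternating denominator form applies to $D$.

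If one prefers to avoid relying on that equivalence, I would argue directly from the shading instead. In an alternating diagram with a checkerboard shading, every crossing has the same shading sign, since traversing a strand alternates over and under consistently relative to the shaded regions. This is the standard fact that the Tait graph of an alternating diagram is monochromatic.

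The main obstacle in either route is justifying the equivalence between being alternating and having same-signed $a_i$. Since the paper quotes it as a known fact, I expect to cite it rather than reprove it.
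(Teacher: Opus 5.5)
Your proposal is correct and matches the paper's argument: the paper says Lemma~\ref{lem:tsign} immediately implies the corollary, using the fact from the preliminaries that a standard diagram is alternating exactly when all $a_i$ have the same sign, so every edge gets sign $\sign(a_i)=\varepsilon$. Your fallback route, that a connected alternating diagram has a monochromatic Tait graph, is also valid but is not needed here.
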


These observations motivate the following definition.

\begin{definition}
Let $G$ be the Tait graph of an unoriented rational link diagram in
standard form.
The \emph{underlying core graph} $G_n$ is obtained from $G$ by
replacing each collection of parallel edges between the central vertex
and a major vertex with a single edge, and by replacing each path
connecting two consecutive major vertices with a single edge.
\end{definition}

By construction, the number of edges of $G_n$ equals the number of
twist boxes of $D$.
We label each edge of $G_n$ by the index of the corresponding twist
box. $G_{2n+1}$ and $G_{2n}$ are shown in
Figure~\ref{BaseGraph}.

\begin{figure}[htbp]
\centering
\includegraphics[scale=0.9]{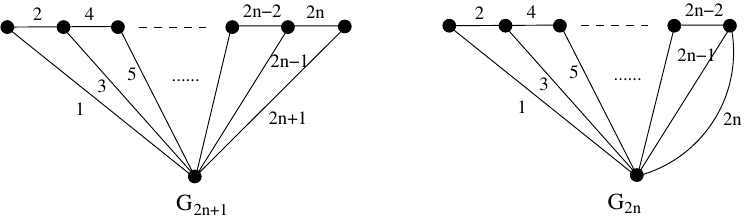}
\caption{The core graphs $G_{2n+1}$ and $G_{2n}$.}
\label{BaseGraph}
\end{figure}

\begin{remark}
Disregarding edge colors, the operations reducing the Tait graph to its
core graph are precisely the \emph{series--parallel reductions}
in the sense of Traldi~\cite{Tr}.
\end{remark}

We now observe that the Tait graph of an unoriented rational link
diagram may be reconstructed from its core graph via colored tensor
product operations.

\begin{definition}
The \emph{augmented path graph} $\widehat{P}_m$ is the pointed graph
obtained from a path $S_m$ of length $m$ by joining its endpoints with
a distinguished edge $e$.
Dually, the \emph{augmented dual path graph}
$\widehat{P}_m^*$ consists of $m+1$ parallel edges between two distinct
vertices, one of which is designated as the distinguished edge $e$.
\end{definition}

Equivalently, $\widehat{P}_m$ is a cycle of length $m+1$
with one distinguished edge.
It is planar, and its dual is $\widehat{P}_m^*$.

The following structure theorem is an immediate consequence of the
preceding description.

\begin{theorem}
\label{structure}
Let $G$ be the Tait graph of the unoriented rational link diagram
corresponding to the continued fraction $[0,a_1,a_2,\ldots,a_n]$.
Then $G$ admits the decomposition
\[
G
=
G_n
\otimes_1 \widehat{Q}_1
\otimes_2 \widehat{Q}_2
\cdots
\otimes_n \widehat{Q}_n,
\]
where
\[
\widehat{Q}_i=
\begin{cases}
\widehat{P}_{a_i}, & \text{if $i$ is even},\\[4pt]
\widehat{P}_{a_i}^*, & \text{if $i$ is odd},
\end{cases}
\]
and each non-distinguished edge of $\widehat{Q}_i$ is assigned the
color $\sign(a_i)$.
\end{theorem}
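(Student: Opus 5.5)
The plan is to verify the decomposition edge by edge, using the four structural observations listed before Lemma~\ref{lem:tsign} together with the definition of the core graph $G_n$. Since the $i$-th edge of $G_n$ carries its own color $i$, each tensor product $\otimes_i \widehat{Q}_i$ replaces exactly one edge. The operations for different $i$ therefore act on disjoint edges and commute, so it suffices to identify, for each $i$, the subgraph of $G$ that replaces edge $i$ of $G_n$. One remark on notation: $\widehat{P}_{a_i}$ is to be read as $\widehat{P}_{|a_i|}$, since the size of the gadget is the number $|a_i|$ of crossings in $B_i$. The sign information is carried entirely by the colors.

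First take $i$ odd. By the second observation, the $|a_i|$ crossings of twist box $B_i$ are exactly the parallel edges joining the central vertex to one major vertex. The core graph collapses this bundle to the single edge $i$. Replacing edge $i$ by $\widehat{P}^*_{|a_i|}-e$ amounts to taking $|a_i|$ parallel edges between the same two endpoints, with $e$ identified with the old edge. This reverses the collapse exactly.

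Next take $i$ even. By the third observation, the crossings of $B_i$ form a path of length $|a_i|$ through degree-two minor vertices between two consecutive major vertices. Here $G_n$ has the single edge $i$. Substituting $\widehat{P}_{|a_i|}-e=S_{|a_i|}$, attached at the endpoints of $e$, restores this path.

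The colors come last. Lemma~\ref{lem:tsign} states that every edge coming from $B_i$ has sign $\sign(a_i)$, and this is exactly the color assigned to the non-distinguished edges of $\widehat{Q}_i$.

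The only point needing care is the bookkeeping that makes the correspondence a bijection. First, every edge of $G$ lies in exactly one twist box, so the pieces replacing the edges of $G_n$ partition $E(G)$. Second, the attachment vertices are correct. For odd $i$ they are the central vertex and the relevant major vertex. For even $i$ they are the two consecutive major vertices, and the minor vertices introduced are new and distinct for different $i$. Third, the edge cases in which a twist box has exactly one crossing are handled: then $\widehat{P}_1$ and $\widehat{P}^*_1$ are both a pair of parallel edges, and the substitution is the identity. I expect this matching of endpoints to be the main (if mild) obstacle. I would settle it by reading off Figure~\ref{BaseGraph} against Figures~\ref{oddlinktotait} and~\ref{evenlinktotait}, treating the parities of $n$ separately, because the right-hand closure changes which major vertices the last box joins.
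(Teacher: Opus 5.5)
Your proposal is correct and follows the paper's route. The paper simply calls the decomposition an immediate consequence of the structural description of $G$ and the definition of $G_n$; your box-by-box reversal of the collapse, with colors from Lemma~\ref{lem:tsign} and the correct reading $\widehat{P}_{|a_i|}$, is that argument written out.

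One correction to the parity check you flag: for even $n$ the last box $B_n$ joins the last major vertex to the central vertex, not to another major vertex. This is why, in deriving \eqref{R_T2n}, contracting the last edge turns the preceding odd edge into a loop. So the collapse defining $G_n$ must be done twist box by twist box rather than by parallel classes, since when $|a_n|=1$ the edge of $B_n$ is parallel to those of $B_{n-1}$. The reversal by $\otimes_n\widehat{Q}_n$ is unaffected.
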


As a consequence of Theorem~\ref{structure}, Theorem~\ref{Ttensor}
provides a method to compute the Kauffman bracket of a rational link
from any of its standard unoriented diagrams.
It suffices to compute the colored Tutte polynomials of the core graphs
in Figure~\ref{BaseGraph}, together with the pointed signed Tutte
polynomials
\[
T_C(\widehat{P}_{a_i},e),\quad
T_L(\widehat{P}_{a_i},e),\quad
T(\widehat{P}_{a_i}-e),\quad
T(\widehat{P}_{a_i}/e),
\]
and their dual counterparts
\[
T_C(\widehat{P}_{a_i}^*,e),\quad
T_L(\widehat{P}_{a_i}^*,e),\quad
T(\widehat{P}_{a_i}^*-e),\quad
T(\widehat{P}_{a_i}^*/e).
\]

The pointed Tutte polynomials of the signed augmented path graphs and
their duals are computed in Section~\ref{sec:path}, while the signed
Tutte polynomials of the core graphs are computed in
Section~\ref{sec:core}.

\section{The Tutte invariants related to the augmented path graphs and their duals}
\label{sec:path}

In this section we extend the results of Traldi~\cite{Tr}
to the setting of colored Tutte polynomials and derive the
corresponding consequences for the Kauffman bracket.

We first compute the (pointed) Tutte polynomials associated to the
augmented path graph $\widehat{P}_m$ directly from the activity-based
definitions.
Label the edges of the path $S_m$ consecutively from $1$ to $m$.
Since $\widehat{P}_m-e$ is the path $S_m$, it has a unique spanning tree,
namely $S_m$ itself.
All its edges are internally active in $T(\widehat{P}_m-e)$
and internally inactive in $T_L(\widehat{P}_m,e)$.
Hence
\begin{equation}
\label{eq:Pm-e}
T(\widehat{P}_m-e)=X_\lambda^m,
\end{equation}
and
\begin{equation}
T_L(\widehat{P}_m,e)=x_\lambda^m.
\end{equation}

The graph $\widehat{P}_m/e$ is a cycle of length $m$.
Its spanning trees are obtained by deleting one edge.
If the deleted edge has label $i\ge2$, then the edges with labels
less than $i$ are internally active, those with larger labels are
internally inactive, and the deleted edge is externally inactive.
If the deleted edge is $e_1$, then all remaining edges are internally
inactive while $e_1$ is externally active.
Therefore
\[
T(\widehat{P}_m/e)
=
\sum_{i=2}^m
X_\lambda^{\,i-1}
y_\lambda
x_\lambda^{\,m-i}
+
Y_\lambda x_\lambda^{\,m-1}.
\]

Using~\eqref{eq:xcondv}, this may be rewritten as
\[
T(\widehat{P}_m/e)
=
\sum_{i=1}^m
X_\lambda^{\,i-1}
y_\lambda
x_\lambda^{\,m-i}
+
x_\lambda^m v,
\]
that is,
\begin{equation}
\label{eq:Pm/e}
T(\widehat{P}_m/e)
=
y_\lambda x_\lambda^{m-1}
[m]_{\frac{X_\lambda}{x_\lambda}}
+
x_\lambda^m v,
\end{equation}
where we use the $q$-notation
\[
[m]_q
=
1+q+q^2+\cdots+q^{m-1}.
\]

The pointed polynomial $T_C(\widehat{P}_m,e)$
is obtained by replacing $Y_\lambda$ with $y_\lambda$ in the preceding
argument. Hence
\begin{equation}
\label{eq:PmTC}
T_C(\widehat{P}_m,e)
=
y_\lambda x_\lambda^{m-1}
[m]_{\frac{X_\lambda}{x_\lambda}}.
\end{equation}

We now compute the (pointed) Tutte polynomials associated to the
pointed graph $N=\widehat{P}^*_m$, which consists of
the distinguished edge $e$ together with $m$ parallel edges.
Then $\widehat{P}_m^*/e$ consists of $m$ loops, and hence
\begin{align}
T(\widehat{P}_m^*/e)
&=
Y_\lambda^m,
\label{eq:Pm*/e}
\\
T_C(\widehat{P}_m^*,e)
&=
y_\lambda^m.
\label{eq:Pm*TC}
\end{align}

To compute $T(\widehat{P}_m^*-e)$ and
$T_L(\widehat{P}_m^*,e)$,
label the $m$ parallel edges of $\widehat{P}_m^*-e$
from $1$ to $m$.
A spanning tree consists of a single edge, say the edge of label $i$;
all edges with larger labels are externally inactive.

If $i\ge2$, then the chosen edge is internally inactive,
and the edges with smaller labels are externally active.
If $i=1$, then that edge is internally active.

Proceeding as in the computation of
$T(\widehat{P}_m/e)$ and using~\eqref{eq:ycondu}, we obtain
\begin{equation}
\label{eq:Pm*-e}
T(\widehat{P}_m^*-e)
=
x_\lambda y_\lambda^{m-1}
[m]_{\frac{Y_\lambda}{y_\lambda}}
+
y_\lambda^m u.
\end{equation}

The computation of $T_L(\widehat{P}_m^*,e)$ is dual to that of
$T_L(\widehat{P}_m,e)$, yielding
\begin{equation}
\label{eq:Pm*TL}
T_L(\widehat{P}_m^*,e)
=
x_\lambda y_\lambda^{m-1}
[m]_{\frac{Y_\lambda}{y_\lambda}}.
\end{equation}

Formulas~\eqref{eq:Pm/e}, \eqref{eq:PmTC}, \eqref{eq:Pm*-e}, and
\eqref{eq:Pm*TL} may be simplified using the following identities.

\begin{lemma}
\label{lem:qxy}
For each $\lambda\in\Lambda$ and integer $m\ge1$, the following equalities hold:
\begin{equation}
\label{eq:qx}
y_\lambda x_\lambda^{m-1}
[m]_{\frac{X_\lambda}{x_\lambda}}
=
\begin{cases}
m\, y_\lambda x_\lambda^{m-1}, & \text{if } u=0,\\[6pt]
\dfrac{X_\lambda^{m}-x_\lambda^{m}}{u}, & \text{if } u\neq0,
\end{cases}
\end{equation}
and
\begin{equation}
\label{eq:qy}
x_\lambda y_\lambda^{m-1}
[m]_{\frac{Y_\lambda}{y_\lambda}}
=
\begin{cases}
m\, x_\lambda y_\lambda^{m-1}, & \text{if } v=0,\\[6pt]
\dfrac{Y_\lambda^{m}-y_\lambda^{m}}{v}, & \text{if } v\neq0.
\end{cases}
\end{equation}
\end{lemma}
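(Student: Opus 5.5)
The plan is to expand the $q$-integer and then use the relations \eqref{eq:ycondu} and \eqref{eq:xcondv}, which hold in the field of fractions $K$ of $\mathbb Z[\Lambda]/P$. We have
\[
y_\lambda x_\lambda^{m-1}[m]_{\frac{X_\lambda}{x_\lambda}}
=y_\lambda\sum_{i=1}^{m}X_\lambda^{\,i-1}x_\lambda^{\,m-i}.
\]
This is a genuine polynomial identity in $\mathbb Z[\Lambda]$, so no division by $x_\lambda$ is actually needed. The same expansion applies to the second expression, with $x_\lambda\sum_{i=1}^m Y_\lambda^{\,i-1}y_\lambda^{\,m-i}$.

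\textbf{Case $u=0$.} By \eqref{eq:ycondu}, $u=0$ gives $X_\lambda=x_\lambda$ in $K$. Every summand then becomes $x_\lambda^{m-1}$, and the sum equals $m\,y_\lambda x_\lambda^{m-1}$. Symmetrically, if $v=0$ then \eqref{eq:xcondv} gives $Y_\lambda=y_\lambda$, and the second expression equals $m\,x_\lambda y_\lambda^{m-1}$.

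\textbf{Case $u\neq 0$.} Use the telescoping factorization
\[
X_\lambda^m-x_\lambda^m=(X_\lambda-x_\lambda)\sum_{i=1}^m X_\lambda^{\,i-1}x_\lambda^{\,m-i},
\]
valid in any commutative ring. By \eqref{eq:ycondu}, $X_\lambda-x_\lambda=y_\lambda u$, so
\[
X_\lambda^m-x_\lambda^m=u\cdot y_\lambda\sum_{i=1}^m X_\lambda^{\,i-1}x_\lambda^{\,m-i}.
\]
Since $K$ is a field and $u\neq0$, dividing by $u$ gives \eqref{eq:qx}. The identity \eqref{eq:qy} follows in the same way from $Y_\lambda-y_\lambda=x_\lambda v$, which is \eqref{eq:xcondv}.

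There is no real obstacle here. The only point needing care is the setting: the identities are equalities in $K$, where $u$ and $v$ live and the relations \eqref{eq:ycondu}--\eqref{eq:xcondv} hold. One should also note that the $q$-notation with quotient $X_\lambda/x_\lambda$ is legitimate, since $x_\lambda$ is nonzero modulo $P$. Alternatively, one can avoid it altogether by working with the expanded sum throughout.
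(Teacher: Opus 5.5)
Your proof is correct and follows essentially the paper's argument: the case $u=0$ reduces to $X_\lambda=x_\lambda$ and $[m]_1=m$, and the case $u\neq0$ combines the geometric-sum identity with $X_\lambda-x_\lambda=y_\lambda u$ (dually, $Y_\lambda-y_\lambda=x_\lambda v$). Writing it as the multiplicative telescoping factorization, rather than dividing by $X_\lambda/x_\lambda-1$ as the paper does, is slightly cleaner, because it never needs $X_\lambda\neq x_\lambda$ to be checked separately.
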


\begin{proof}
We prove~\eqref{eq:qx}; the proof of~\eqref{eq:qy} is analogous.

By~\eqref{eq:ycondu} we have $X_\lambda=x_\lambda+y_\lambda u$. If $u=0$, then $\frac{X_\lambda}{x_\lambda}=1$ and
$[m]_1=m$, giving the first case.

Assume now that $u\neq0$.
From the geometric series expansion,
\[
y_\lambda x_\lambda^{m-1}
[m]_{\frac{X_\lambda}{x_\lambda}}
=
y_\lambda x_\lambda^{m-1}
\frac{(\frac{X_\lambda}{x_\lambda})^m-1}{\frac{X_\lambda}{x_\lambda}-1}
=
y_\lambda
\frac{X_\lambda^{m}-x_\lambda^{m}}
{X_\lambda-x_\lambda}.
\]
Substituting
$X_\lambda-x_\lambda$ by $y_\lambda u$ then yields the desired result.
\end{proof}

\section{The colored Tutte polynomial of the core graphs}
\label{sec:core}

In this section we derive closed formulas for the colored Tutte
polynomials
\[
T_n = T(G_n)
\]
of the core graphs $G_n$.
Theorem~\ref{thm:tilings} provides both our simplest closed form
and the most efficient method for computing these polynomials.

In Section~\ref{sec:core2} we present alternative reformulations under
the additional assumption that $T_n$ lies in the field of fractions $K$
of $\mathbb Z[\Lambda]/P$, where $P$ is a prime ideal containing $I_1$
but not containing any of the variables
$X_\lambda, x_\lambda, Y_\lambda,$ or $y_\lambda$.

Applying the deletion--contraction recurrence~\eqref{eq:TGrec}
to the edge $e_{2n+1}$ of $G_{2n+1}$, and observing that
$e_{2n}$ becomes a bridge once $e_{2n+1}$ is deleted (and hence must be
contracted), we obtain
\begin{equation}
\label{R_T2n+1}
T_{2n+1}
=
y_{2n+1} X_{2n} T_{2n-1}
+
x_{2n+1} T_{2n}.
\end{equation}

Similarly, applying~\eqref{eq:TGrec} to $e_{2n}$ in $G_{2n}$,
and observing that $e_{2n-1}$ becomes a loop after $e_{2n}$ is
contracted (and hence must be deleted), yields
\begin{equation}
\label{R_T2n}
T_{2n}
=
Y_{2n-1} x_{2n} T_{2n-2}
+
y_{2n} T_{2n-1}.
\end{equation}

For the initial condition we set
\[
T_0 = T(G_0) = 1,
\qquad
T_1 = T(G_1)=T(\{e_1\}) = X_1,
\]
where $G_0$ is the graph consisting of a single vertex.
From the recurrences we obtain
\[
T_2 = Y_1 x_2 + X_1 y_2,
\]
\[
T_3 = X_1 X_2 y_3 + Y_1 x_2 x_3 + X_1 y_2 x_3,
\]
and so forth.

\medskip
Repeated application of~\eqref{R_T2n+1} and~\eqref{R_T2n}
shows that $T_n$ is a sum of $F_n$ monomials,
where $(F_n)$ is the Fibonacci sequence defined by
$F_0=F_1=1$ and $F_{n+1}=F_n+F_{n-1}$ for $n\ge1$.
Equivalently, $F_n$ is the number of tilings of a $1\times n$
rectangle by tiles of sizes $1\times1$ and $2\times1$.

This combinatorial interpretation leads to the following
description of $T_n$.

\begin{theorem}
\label{thm:tilings}
For $n\ge1$, the polynomial $T_n$ is the total weight of all tilings
of a $1\times n$ rectangle by tiles of size $1\times1$ and $2\times1$,
with weights assigned as follows:

\begin{itemize}
\item[(i)] A $1\times1$ tile in position $1$ has weight $X_1$;
\item[(ii)] A $1\times1$ tile in any other odd position $2k+1$
has weight $x_{2k+1}$;
\item[(iii)] A $1\times1$ tile in any even position $2k$
has weight $y_{2k}$;
\item[(iv)] A $2\times1$ tile covering positions $(2k-1,2k)$
has weight $Y_{2k-1} x_{2k}$;
\item[(v)] A $2\times1$ tile covering positions $(2k,2k+1)$
has weight $X_{2k} y_{2k+1}$.
\end{itemize}
\end{theorem}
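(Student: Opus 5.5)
We argue by strong induction on $n$, using the recurrences \eqref{R_T2n+1} and \eqref{R_T2n} together with the initial values $T_0=1$ and $T_1=X_1$. Let $W_n$ denote the total weight of the weighted tilings of a $1\times n$ rectangle described in the statement, and set $W_0=1$ (the empty tiling). The goal is to show that $W_n$ satisfies the same initial conditions and the same recurrences as $T_n$.

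\emph{Base cases.} For $n=1$ there is a single tiling, consisting of a $1\times1$ tile in position $1$, so $W_1=X_1=T_1$ by rule (i). For $n=2$ there are two tilings. The $2\times1$ tile on $(1,2)$ has weight $Y_1x_2$ by rule (iv). Two $1\times1$ tiles have weight $X_1y_2$ by rules (i) and (iii). Hence $W_2=Y_1x_2+X_1y_2=T_2$. We also check that \eqref{R_T2n} with $n=1$ is consistent with $W_0=1$: it gives $Y_1x_2\cdot 1+y_2X_1$, as required.

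\emph{Inductive step.} Classify the tilings of a $1\times m$ rectangle, $m\ge2$, by the tile covering the last position $m$.

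\begin{itemize}
\item[(a)] If $m=2n+1$ is odd and the last tile is a $1\times1$ tile, then $m\ge3$, so the tile is not in position $1$ and has weight $x_{2n+1}$ by rule (ii). The rest is an arbitrary tiling of length $2n$, contributing $x_{2n+1}W_{2n}$.
\item[(b)] If $m=2n+1$ and the last tile is a $2\times1$ tile, it covers $(2n,2n+1)$ and has weight $X_{2n}y_{2n+1}$ by rule (v). This contributes $y_{2n+1}X_{2n}W_{2n-1}$.
\end{itemize}

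Together these match \eqref{R_T2n+1}.

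\begin{itemize}
\item[(c)] If $m=2n$ is even and the last tile is a $1\times1$ tile, it has weight $y_{2n}$, contributing $y_{2n}W_{2n-1}$.
\item[(d)] If $m=2n$ and the last tile is a domino on $(2n-1,2n)$, it has weight $Y_{2n-1}x_{2n}$, contributing $Y_{2n-1}x_{2n}W_{2n-2}$.
\end{itemize}

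Together these match \eqref{R_T2n}.

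The key point is that each tile's weight depends only on its own position, not on the rest of the tiling. The only exception is the special weight $X_1$ for a $1\times1$ tile in position $1$, and that tile lies inside the remaining shorter rectangle, where the induction hypothesis already accounts for it. So the weight is multiplicative over the decomposition (last tile, remaining tiling), and strong induction gives $W_n=T_n$ for all $n\ge1$.

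The main point needing care is the boundary: checking that the first position always receives the special weight, that $W_0=1$ makes the recurrences valid at $n=1$, and that the recurrences themselves, already derived in the text from \eqref{eq:TGrec}, apply for all $n\ge1$. None of these steps raises a real obstacle.
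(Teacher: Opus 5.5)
Your proof is correct and follows the paper's argument. The paper likewise reads each term of the recurrences \eqref{R_T2n+1} and \eqref{R_T2n} as appending a $1\times1$ or $2\times1$ tile at the right end of a tiling; your last-tile decomposition, with the base cases and the strong induction written out, is that same argument made explicit.
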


\begin{proof}
The statement follows directly from the recurrences
\eqref{R_T2n+1} and~\eqref{R_T2n} by interpreting
each term of the recurrence as the addition of either a
$1\times1$ tile or a $2\times1$ tile at the end of a tiling.
\end{proof}

\begin{example}{\em 
For $n=4$,  $F_4=5$ and the five possible tilings are
$(1,2,3,4)$,
$(1\text{--}2,3,4)$,
$(1,2\text{--}3,4)$,
$(1,2,3\text{--}4)$ and
$(1\text{--}2,3\text{--}4)$.
Their corresponding monomials are
$X_1 y_2 x_3 y_4$,
$Y_1 x_2 x_3 y_4$,
$X_1 X_2 y_3 y_4$,
$X_1 y_2 Y_3 x_4$ and 
$Y_1 x_2 Y_3 x_4$.
Therefore
\[
T_4
=
X_1 y_2 x_3 y_4
+
Y_1 x_2 x_3 y_4
+
X_1 X_2 y_3 y_4
+
X_1 y_2 Y_3 x_4
+
Y_1 x_2 Y_3 x_4.
\]
}
\end{example}

\begin{example}{\em 
For $n=11$ we have $F_{11}=144$, so the polynomial $T_{11}$ contains
$144$ monomials.
For example, the tiling
$
(1,2,3,4\text{--}5,6\text{--}7,8,9\text{--}10,11)
$
has weight
$
X_1 y_2 x_3 X_4 y_5 X_6 y_7 y_8 Y_9 x_{10} x_{11},
$
and hence contributes this monomial to $T_{11}$.}
\end{example}

We now derive a closed form expression for the sequence
$(T_n)_{n\ge0}$.
From~\eqref{R_T2n+1} and~\eqref{R_T2n}, we may write, for $k\ge2$,
\begin{equation}
\label{eq:Trecgen}
T_k
=
u_k T_{k-2}
+
v_k T_{k-1},
\end{equation}
with initial conditions
\[
T_0=1,
\qquad
T_1=X_1,
\]
where
\begin{equation}
\label{u_n}
u_k=
\begin{cases}
X_{k-1} y_k, & \text{if $k$ is odd},\\[4pt]
Y_{k-1} x_k, & \text{if $k$ is even},
\end{cases}
\end{equation}
and
\begin{equation}
\label{v_n}
v_k=
\begin{cases}
x_k, & \text{if $k$ is odd},\\[4pt]
y_k, & \text{if $k$ is even}.
\end{cases}
\end{equation}

Equation~\eqref{eq:Trecgen} may be rewritten in matrix form as
\begin{equation}
\label{eq:Tmatrix}
\begin{bmatrix}
T_n\\
T_{n-1}
\end{bmatrix}
=
\begin{bmatrix}
v_n & u_n\\
1 & 0
\end{bmatrix}
\begin{bmatrix}
v_{n-1} & u_{n-1}\\
1 & 0
\end{bmatrix}
\cdots
\begin{bmatrix}
v_2 & u_2\\
1 & 0
\end{bmatrix}
\begin{bmatrix}
X_1\\
1
\end{bmatrix}
\end{equation}
for all $n\ge 2$.

In many applications, including the computation of the Kauffman
bracket, the Tutte invariants are considered in the field of fractions
$K$ of $\mathbb Z[\Lambda]/P$, where $P$ is a prime ideal containing
$I_1$ but none of the variables
$X_\lambda,x_\lambda,Y_\lambda,y_\lambda$.
Under this assumption,
equation~\eqref{eq:ycondu} gives
\[
X_1 = x_1 + y_1 u,
\]
and hence
\[
\begin{bmatrix}
X_1\\
1
\end{bmatrix}
=
\begin{bmatrix}
x_1 & y_1\\
1 & 0
\end{bmatrix}
\begin{bmatrix}
1\\
u
\end{bmatrix}.
\]

Setting
\begin{equation}
\label{eq:uv0}
u_1=y_1,
\qquad
v_1=x_1,
\end{equation}
we may rewrite~\eqref{eq:Tmatrix} in the uniform form
\begin{equation}
\label{eq:Tmatrixu}
\begin{bmatrix}
T_n\\
T_{n-1}
\end{bmatrix}
=
\begin{bmatrix}
v_n & u_n\\
1 & 0
\end{bmatrix}
\begin{bmatrix}
v_{n-1} & u_{n-1}\\
1 & 0
\end{bmatrix}
\cdots
\begin{bmatrix}
v_1 & u_1\\
1 & 0
\end{bmatrix}
\begin{bmatrix}
1\\
u
\end{bmatrix},
\end{equation}
 which holds for all $n\ge 1$.
 
\begin{remark}
\label{rem:T-1}
The choices in~\eqref{eq:uv0} are consistent with
\eqref{v_n} and, after formally setting $X_0=1$,
also with~\eqref{u_n}.
With the convention $T_{-1}=u$, the recurrence
\eqref{eq:Trecgen} extends to $k=1$,
yielding
\[
T_1 = y_1 T_{-1} + x_1 T_0=u_1 T_{-1} + v_1 T_0.
\]
\end{remark}

\medskip
Taking ratios in~\eqref{eq:Tmatrixu} yields the generalized continued
fraction representation
\begin{equation}
\label{eq:Tgencfrac}
\frac{T_n}{T_{n-1}}
=
v_n
+
\cfrac{u_n}{
v_{n-1}
+
\cfrac{u_{n-1}}{
\ddots
+
\cfrac{u_2}{
v_1 + u_1 u
}
}
}.
\end{equation}

\section{Two reformulations of our results on the core graphs}
\label{sec:core2}

In this section we assume that our Tutte invariants lie in the field
of fractions $K$ of $\mathbb Z[\Lambda]/P$, where $P$ is a prime ideal
containing $I_1$ but none of the variables
$X_\lambda,x_\lambda,Y_\lambda,y_\lambda$.
Under this assumption we present two reformulations of the results
of Section~\ref{sec:core}.
The first reformulation converts~\eqref{eq:Tgencfrac}
into an ordinary continued fraction.

\begin{definition}
Let $(p_i)_{i\ge0}$ be a sequence of parameters.
For $m\ge0$, the \emph{generalized semifactorials induced by $(p_i)$}
are defined by
\[
p_{2m}!!
=
\prod_{j=0}^{m} p_{2j},
\qquad
p_{2m+1}!!
=
\prod_{j=0}^{m} p_{2j+1}.
\]
\end{definition}

If $p_0=1$ and $p_i=i$ for $i>0$, this reduces to the usual
definition of semifactorials.

By setting $u_0=1$ and $u_{-1}=1$, we can extend the definition of $u_n!!$ to include the case $n=0$ and $n=-1$, namely $u_0!!=u_{-1}!!=1$. This allows us to define
\begin{equation}
\label{eq:Pndef}
S_n=\dfrac{T_n}{u_n!!}
\end{equation}
for $n\ge -1$.

Using~\eqref{eq:Trecgen} and the definition of $u_0$ and $u_{-1}$, for any $j\ge 0$, we compute
\begin{align*}
S_{2j+1}
&=
\frac{T_{2j+1}}{u_{2j+1}!!}
=
\frac{u_{2j+1}T_{2j-1}+v_{2j+1}T_{2j}}
{u_{2j+1}!!}  \\
&=
S_{2j-1}
+
\frac{v_{2j+1}u_{2j}!!}{u_{2j+1}!!}
S_{2j},
\end{align*}
and
\begin{align*}
S_{2j+2}
&=
\frac{T_{2j+2}}{u_{2j+2}!!}
=
\frac{u_{2j+2}T_{2j}+v_{2j+2}T_{2j+1}}
{u_{2j+2}!!}  \\
&=
S_{2j}
+
\frac{v_{2j+2}u_{2j+1}!!}{u_{2j+2}!!}
S_{2j+1}.
\end{align*}

These relations may be summarized as follows.

\begin{proposition}
\label{prop:Prec}
The sequence $(S_n)$ satisfies
\[
S_n
=
S_{n-2}
+
\frac{v_n\, u_{n-1}!!}{u_n!!}
\, S_{n-1},
\qquad n\ge 1,
\]
with initial conditions
\[
S_{-1}=u,
\qquad
S_0=1.
\]
\end{proposition}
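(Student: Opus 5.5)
The proof is a direct computation from the recurrence~\eqref{eq:Trecgen}, extended to $k=1$ via Remark~\ref{rem:T-1}, together with the definition $S_n=T_n/u_n!!$ and the conventions $u_0=u_{-1}=1$. The one structural fact I need is the step relation of the generalized semifactorials: for every $n\ge 1$,
\[
u_n!!=u_n\cdot u_{n-2}!!.
\]
For $n\ge 2$ this is immediate from the definition, since $u_n!!$ and $u_{n-2}!!$ run over indices of the same parity. For $n=1$ it reads $u_1!!=u_1\cdot u_{-1}!!$, and this holds because $u_{-1}!!=1$.

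\textbf{Initial conditions.} These are immediate: $S_0=T_0/u_0!!=1$, and $S_{-1}=T_{-1}/u_{-1}!!=u$ by the convention $T_{-1}=u$ of Remark~\ref{rem:T-1}.

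\textbf{Recurrence.} Fix $n\ge 1$ and write $T_n=u_nT_{n-2}+v_nT_{n-1}$. For $n\ge 2$ this is~\eqref{eq:Trecgen}; for $n=1$ it is the extended form $T_1=u_1T_{-1}+v_1T_0$, which is $X_1=y_1u+x_1$ and holds by~\eqref{eq:ycondu}. Divide both sides by $u_n!!$. The first term becomes
\[
\frac{u_nT_{n-2}}{u_n\,u_{n-2}!!}=S_{n-2}.
\]
The second term becomes
\[
\frac{v_n\,u_{n-1}!!}{u_n!!}\cdot\frac{T_{n-1}}{u_{n-1}!!}=\frac{v_n\,u_{n-1}!!}{u_n!!}\,S_{n-1}.
\]
Adding the two gives the stated recurrence. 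This matches the two parity cases $n=2j+1$ and $n=2j+2$ displayed just before the proposition, so I would present the argument uniformly in $n$ rather than splitting by parity.

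\textbf{Main point of care.} The only real issue is the boundary case $n=1$. There one must check two things: that the convention $u_{-1}!!=1$ is compatible with the semifactorial step relation above, and that the extension of~\eqref{eq:Trecgen} to $k=1$ is valid, which rests on~\eqref{eq:ycondu}.

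Division by $u_n!!$ is legitimate because we work in the field $K$ and each $u_k$ is a nonzero product of variables. Those variables are nonzero modulo $P$, and $P$ is prime, so their product is nonzero as well.
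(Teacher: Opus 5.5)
Your proposal is correct and follows the paper's own argument: divide $T_n=u_nT_{n-2}+v_nT_{n-1}$ (extended to $n=1$ via $T_{-1}=u$) by $u_n!!=u_n\,u_{n-2}!!$, and read off the initial values from the conventions $u_0!!=u_{-1}!!=1$. The only differences are that the paper writes out the cases $n=2j+1$ and $n=2j+2$ separately while you treat them uniformly, and that you state explicitly the nonvanishing of $u_n!!$ in $K$, which the paper leaves implicit.
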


\medskip
We now obtain the first reformulation of the closed form for $T_n$.

\begin{theorem}
\label{thm:coremain}
Under the assumption that the Tutte invariants lie in the field
of fractions $K$ of $\mathbb Z[\Lambda]/P$, where $P$ is a prime ideal
containing $I_1$ but none of the variables
$X_\lambda,x_\lambda,Y_\lambda,y_\lambda$, we have
\[
T_n = u_n!! \, S_n,
\]
where
\[
\begin{bmatrix}
S_n\\
S_{n-1}
\end{bmatrix}
=
\begin{bmatrix}
\frac{v_n\, u_{n-1}!!}{u_n!!} & 1\\
1 & 0
\end{bmatrix}
\begin{bmatrix}
\frac{v_{n-1}\, u_{n-2}!!}{u_{n-1}!!} & 1\\
1 & 0
\end{bmatrix}
\cdots
\begin{bmatrix}
\frac{v_1u_0!!}{u_1!!} & 1\\
1 & 0
\end{bmatrix}
\begin{bmatrix}
1\\
u
\end{bmatrix},
\qquad n\ge0.
\]
Here $(u_k)$ and $(v_k)$ are given by
\eqref{u_n}, \eqref{v_n}, and~\eqref{eq:uv0}.
\end{theorem}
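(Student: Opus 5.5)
The plan is to obtain the theorem by turning Proposition~\ref{prop:Prec} into matrix form and then undoing the normalization~\eqref{eq:Pndef}. First I check that every quantity in the statement lies in $K$. Each $u_k$ is, by~\eqref{u_n} and~\eqref{eq:uv0}, a product of variables. None of these variables lies in $P$, so each $u_k$ is nonzero in the domain $\mathbb Z[\Lambda]/P$. The same holds for $u_n!!$, hence the fractions $v_k u_{k-1}!!/u_k!!$ are well defined in $K$. In particular $S_n=T_n/u_n!!$ is a legitimate element of $K$, and $T_n=u_n!!\,S_n$ is simply~\eqref{eq:Pndef} rearranged. So the content of the theorem is the matrix formula for $S_n$.

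Second, I rewrite Proposition~\ref{prop:Prec} as
\[
\begin{bmatrix} S_k\\ S_{k-1}\end{bmatrix}
=
\begin{bmatrix} \frac{v_k u_{k-1}!!}{u_k!!} & 1\\ 1 & 0\end{bmatrix}
\begin{bmatrix} S_{k-1}\\ S_{k-2}\end{bmatrix},
\qquad k\ge1.
\]
The top row is the recurrence of the proposition and the bottom row is trivial. Iterating this from $k=n$ down to $k=1$ and using the initial vector $(S_0,S_{-1})^T=(1,u)^T$ gives the claimed product for all $n\ge1$. For $n=0$ the product is empty, and the identity reads $(S_0,S_{-1})^T=(1,u)^T$, which holds by definition since $T_0=1$ and $u_0!!=u_{-1}!!=1$.

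The only step that needs real care is the validity of Proposition~\ref{prop:Prec} at $k=1$, since the derivation preceding it used the recurrence~\eqref{eq:Trecgen}, which was stated for $k\ge2$. Here I invoke Remark~\ref{rem:T-1}. With $T_{-1}=u$, the recurrence extends to $k=1$ because $T_1=X_1=x_1+y_1u=v_1T_0+u_1T_{-1}$ by~\eqref{eq:ycondu}. Dividing by $u_1!!=u_1$ then gives $S_1=S_{-1}+\frac{v_1u_0!!}{u_1!!}S_0$, as required. I also need to confirm that the semifactorial bookkeeping is consistent for small indices, namely that $u_{-1}!!=1$ correctly makes $u_1!!=u_1$. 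With this in place, the displayed computation of $S_{2j+1}$ and $S_{2j+2}$ in the text covers all $j\ge0$, and the induction is complete. Alternatively, one can bypass $S_n$ entirely: conjugate each factor in~\eqref{eq:Tmatrixu} by $\operatorname{diag}(1/u_k!!,\,1/u_{k-1}!!)$ and observe that the products telescope to the stated matrices. I would mention this as a cross-check.
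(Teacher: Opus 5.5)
Your proposal is correct and follows the paper's route: the theorem is just Proposition~\ref{prop:Prec} written in matrix form and iterated down to the initial vector $(S_0,S_{-1})^T=(1,u)^T$. As you note, the $k=1$ step rests on the convention $T_{-1}=u$ of Remark~\ref{rem:T-1}, and your diagonal-rescaling cross-check of \eqref{eq:Tmatrixu} also works, since $u_k!!=u_k\,u_{k-2}!!$ turns each factor into the stated one.
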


Taking ratios yields the ordinary continued fraction
\begin{equation}
\label{eq:Pcfrac}
\frac{S_n}{S_{n-1}}
=
\frac{v_n\, u_{n-1}!!}{u_n!!}
+
\cfrac{1}{
\frac{v_{n-1}\, u_{n-2}!!}{u_{n-1}!!}
+
\cfrac{1}{
\ddots
+
\cfrac{1}{
\frac{v_1u_0!!}{u_1!!}
+
\cfrac{1}{u}
}
}
}.
\end{equation}

Our second reformulation is inspired by~\cite[Theorem 4.2]{La-Ro}
and generalizes it to the present setting.
The goal is to replace~\eqref{eq:Tmatrixu} by a matrix product
in which the $k$-th matrix (counting from the right) involves only
the variables indexed by $k$, namely
$X_k,x_k,Y_k,$ and $y_k$.

Extracting the common factor
\[
\prod_{1\le j\le \lfloor n/2\rfloor} X_{2j}
\cdot
\prod_{1\le j\le \lfloor (n+1)/2\rfloor} Y_{2j-1}
\]
from the expression for $T_n$ in Theorem~\ref{thm:tilings}
yields the following reformulation.

\begin{corollary}
\label{cor:tilings}
The polynomial $T_n$ equals
\[
\prod_{1\le j\le \lfloor n/2\rfloor} X_{2j}
\cdot
\prod_{1\le j\le \lfloor (n+1)/2\rfloor} Y_{2j-1}
\]
times the total weight of all tilings of a $1\times n$ rectangle by
$1\times1$ and $2\times1$ tiles, with weights assigned as follows:
\begin{itemize}
\item[(i)] A $1\times1$ tile in position $1$ has weight $\frac{X_1}{Y_1}$;
\item[(ii)] A $1\times1$ tile in any other odd position $2k+1$
has weight $\frac{x_{2k+1}}{Y_{2k+1}}$;
\item[(iii)] A $1\times1$ tile in any even position $2k$
has weight $\frac{y_{2k}}{X_{2k}}$;
\item[(iv)] A $2\times1$ tile covering $(2k-1,2k)$
has weight $\frac{x_{2k}}{X_{2k}}$;
\item[(v)] A $2\times1$ tile covering $(2k,2k+1)$
has weight $\frac{y_{2k+1}}{Y_{2k+1}}$.
\end{itemize}
\end{corollary}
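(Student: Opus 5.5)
The plan is to derive Corollary~\ref{cor:tilings} directly from Theorem~\ref{thm:tilings}. The idea is to show that every tiling monomial there, after dividing by the common factor
\[
C_n=\prod_{1\le j\le \lfloor n/2\rfloor} X_{2j}\cdot\prod_{1\le j\le \lfloor (n+1)/2\rfloor} Y_{2j-1},
\]
equals the product of the modified tile weights (i)--(v) listed in the corollary. We work in the field of fractions $K$, where none of $X_\lambda, Y_\lambda$ vanish, so the division makes sense.

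The key observation is that $C_n$ contains exactly one factor for each position $k\in\{1,\ldots,n\}$: $Y_k$ if $k$ is odd and $X_k$ if $k$ is even. So we can distribute $C_n$ over the tiles of a given tiling. A tile covering a set of positions receives the product of the factors attached to those positions. Since every tiling partitions $\{1,\ldots,n\}$, the product over all tiles of these assigned factors is exactly $C_n$, whatever the tiling.

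It then suffices to check tile by tile that the original weight divided by its assigned factor matches the new weight.
\begin{itemize}
\item For a $1\times1$ tile at position $1$: $X_1/Y_1$.
\item At an odd position $2k+1>1$: $x_{2k+1}/Y_{2k+1}$.
\item At an even position $2k$: $y_{2k}/X_{2k}$.
\item For a domino $(2k-1,2k)$ the assigned factor is $Y_{2k-1}X_{2k}$, giving $Y_{2k-1}x_{2k}/(Y_{2k-1}X_{2k})=x_{2k}/X_{2k}$.
\item For a domino $(2k,2k+1)$ the assigned factor is $X_{2k}Y_{2k+1}$, giving $X_{2k}y_{2k+1}/(X_{2k}Y_{2k+1})=y_{2k+1}/Y_{2k+1}$.
\end{itemize}
These are exactly items (i)--(v). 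Summing over all tilings and pulling out $C_n$ then gives the statement.

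The only point needing care is the bookkeeping of $C_n$. The ranges $1\le j\le\lfloor n/2\rfloor$ and $1\le j\le\lfloor(n+1)/2\rfloor$ should be checked to enumerate precisely the even and the odd positions in $\{1,\ldots,n\}$, so that no factor is left over or missing at the right end for either parity of $n$. This is immediate, so no real obstacle arises.
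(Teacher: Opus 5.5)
Your proposal is correct and follows essentially the same argument as the paper, which notes that the prefactor contains exactly one variable for each position $1\le k\le n$ and divides each tile weight from Theorem~\ref{thm:tilings} by the variables of the positions it covers. Your tile-by-tile check and the parity bookkeeping of the two products make that one-line justification explicit.
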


Indeed, the prefactor
\[
\prod_{1\le j\le \lfloor n/2\rfloor} X_{2j}
\cdot
\prod_{1\le j\le \lfloor (n+1)/2\rfloor} Y_{2j-1}
\]
contains exactly one variable indexed by $k$ for each
$1\le k\le n$.
Thus the contribution of each tile in Theorem~\ref{thm:tilings}
may be divided by the variable whose index matches the
position covered by that tile.

In analogy with~\eqref{eq:Tmatrix} and~\eqref{eq:Tmatrixu},
we obtain the following matrix formulation.

\begin{theorem}
\label{thm:lr}
Define
\[
R_n
=
\frac{T_n}{
\prod_{1\le j\le \lfloor n/2\rfloor} X_{2j}
\cdot
\prod_{1\le j\le \lfloor (n+1)/2\rfloor} Y_{2j-1}
}.
\]
Then
\[
R_n
=
\begin{bmatrix}
1 & 0
\end{bmatrix}
M_n M_{n-1}\cdots M_1
\begin{bmatrix}
1\\
u
\end{bmatrix},
\]
where
\[
M_k
=
\begin{cases}
\begin{bmatrix}
\frac{y_k}{X_k} & \frac{x_k}{X_k}\\[6pt]
1 & 0
\end{bmatrix},
& \text{if $k$ is even},\\[14pt]
\begin{bmatrix}
\frac{x_k}{Y_k} & \frac{y_k}{Y_k}\\[6pt]
1 & 0
\end{bmatrix},
& \text{if $k$ is odd}.
\end{cases}
\]
\end{theorem}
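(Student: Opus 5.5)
The plan is to derive a three-term recurrence for $R_n$ directly from \eqref{eq:Trecgen} and then convert it into matrix form exactly as \eqref{eq:Tmatrix} was converted into \eqref{eq:Tmatrixu}. Write $D_n=\prod_{1\le j\le \lfloor n/2\rfloor} X_{2j}\cdot\prod_{1\le j\le \lfloor (n+1)/2\rfloor} Y_{2j-1}$, so $R_n=T_n/D_n$. Note that $D_k/D_{k-1}$ equals $X_k$ for $k$ even and $Y_k$ for $k$ odd; call this $Z_k$. Then $D_k/D_{k-2}=Z_kZ_{k-1}$. Set $D_0=1$ and $D_{-1}=1$, so that $R_0=1$ and $R_{-1}=T_{-1}=u$, using the convention of Remark~\ref{rem:T-1}.

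The first step is to divide \eqref{eq:Trecgen} by $D_k$, which gives
\[
R_k=\frac{v_k}{Z_k}R_{k-1}+\frac{u_k}{Z_kZ_{k-1}}R_{k-2}.
\]
Now check the coefficients against \eqref{u_n}, \eqref{v_n}.
\begin{itemize}
\item For $k$ even, $v_k/Z_k=y_k/X_k$ and $u_k/(Z_kZ_{k-1})=Y_{k-1}x_k/(X_kY_{k-1})=x_k/X_k$.
\item For $k\ge3$ odd, $v_k/Z_k=x_k/Y_k$ and $u_k/(Z_kZ_{k-1})=X_{k-1}y_k/(Y_kX_{k-1})=y_k/Y_k$.
\end{itemize}
These are exactly the entries of $M_k$. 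For $k=1$, use Remark~\ref{rem:T-1}: $T_1=y_1u+x_1$. Dividing by $D_1=Y_1$ gives $R_1=\frac{x_1}{Y_1}R_0+\frac{y_1}{Y_1}R_{-1}$, which matches the odd-$k$ form of $M_1$ with $Z_0:=1$.

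Hence for all $k\ge1$ we have $\begin{bmatrix}R_k\\R_{k-1}\end{bmatrix}=M_k\begin{bmatrix}R_{k-1}\\R_{k-2}\end{bmatrix}$. Iterating from $k=n$ down to $k=1$ with $\begin{bmatrix}R_0\\R_{-1}\end{bmatrix}=\begin{bmatrix}1\\u\end{bmatrix}$, and taking the first coordinate, gives the claim.

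There is no real obstacle here. The only points needing care are the bookkeeping of the parity-dependent factor $Z_k$ (including the $k=1$ boundary case with its conventions) and the standing assumption that we work in the fraction field $K$, so the divisions by $X_k,Y_k$ are legitimate. Alternatively, one could read the recurrence off Corollary~\ref{cor:tilings} by conditioning on the last tile, which yields the same coefficients.
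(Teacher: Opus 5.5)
Your proof is correct and is essentially the paper's argument. The paper divides the tile weights of Theorem~\ref{thm:tilings} by the same normalizing product to obtain Corollary~\ref{cor:tilings}, then reads off the transfer matrices as in \eqref{eq:Tmatrixu}, treating $k=1$ via $X_1=x_1+y_1u$; this is the same computation as your division of \eqref{eq:Trecgen} by $D_k$, and your bookkeeping with $Z_k=D_k/D_{k-1}$, $R_0=1$, $R_{-1}=u$ spells out what the paper leaves to the phrase ``in analogy.''
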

In particular, each matrix $M_k$ depends only on the variables indexed by $k$, reflecting the local contribution of the $k$-th twist box.

\section{Formulas for the Kauffman bracket}
\label{sec:Kbracket}

In this section we combine Theorems~\ref{Ttensor} and~\ref{structure}
with the results of Sections~\ref{sec:path}, \ref{sec:core}, and
\ref{sec:core2} to obtain explicit formulas for the Kauffman bracket
of a rational link diagram in standard form.

For brevity, we write
\[
K_n = K\bigl([0,a_1,a_2,\ldots,a_n]\bigr)
\]
for the Kauffman bracket of the unoriented rational link diagram
encoded by the continued fraction $[0,a_1,a_2,\ldots,a_n]$.
Let
\[
\varepsilon_i = \sign(a_i).
\]
By Lemma~\ref{lem:tsign}, $\varepsilon_i$ is precisely the sign of the
edge in the Tait graph corresponding to crossings in the $i$-th twist
box.

Setting $\lambda=\varepsilon\in\{+,-\}$ and applying
Theorem~\ref{T41} to
\eqref{eq:Pm-e}--\eqref{eq:Pm*TL}, we obtain the following
correspondence rules for computing the Kauffman bracket:
\begin{align}
\label{eq:KB1}
T(\widehat{P}_m-e)
&\longmapsto
(-A)^{-3m\varepsilon},
\\
T_L(\widehat{P}_m,e)
&\longmapsto
A^{m\varepsilon},
\\
T(\widehat{P}_m/e)
&\longmapsto
A^{(m-2)\varepsilon}[m]_{-A^{-4\varepsilon}}
+
A^{m\varepsilon} d,
\\
T_C(\widehat{P}_m,e)
&\longmapsto
A^{(m-2)\varepsilon}[m]_{-A^{-4\varepsilon}},
\\
T(\widehat{P}_m^*/e)
&\longmapsto
(-A)^{3m\varepsilon},
\\
T_C(\widehat{P}_m^*,e)
&\longmapsto
A^{-m\varepsilon},
\\
T(\widehat{P}_m^*-e)
&\longmapsto
A^{-(m-2)\varepsilon}[m]_{-A^{4\varepsilon}}
+
A^{-m\varepsilon} d,
\\
T_L(\widehat{P}_m^*,e)
&\longmapsto
A^{-(m-2)\varepsilon}[m]_{-A^{4\varepsilon}}.
\end{align}

Here
$
d=-A^{-2}-A^2
$
as in~\eqref{eq:d}.

Since replacing $\varepsilon$ by $-\varepsilon$ simply interchanges
the two possible edge signs, the two cases in
Lemma~\ref{lem:qxy} lead to the same structural consequence. Thus by Theorem~\ref{T41}, we obtain the following specialization.

\begin{corollary}
\label{cor:qA}
\[
A^{(2-m)\varepsilon}
[m]_{-A^{4\varepsilon}}
=
\begin{cases}
m\, A^{(2-m)\varepsilon},
& \text{if } d=0,\\[6pt]
\dfrac{(-A)^{3m\varepsilon}-A^{-m\varepsilon}}{d},
& \text{otherwise}.
\end{cases}
\]
\end{corollary}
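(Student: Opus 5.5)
Corollary~\ref{cor:qA} is the specialization of~\eqref{eq:qy} in Lemma~\ref{lem:qxy} under the substitution of Theorem~\ref{T41}, with $\lambda=\varepsilon$. By~\eqref{eq:d}, both $u$ and $v$ become $d$, and the substitution gives
\[
x_\varepsilon\mapsto A^{\varepsilon},\qquad
y_\varepsilon\mapsto A^{-\varepsilon},\qquad
Y_\varepsilon\mapsto -A^{3\varepsilon}.
\]
So I will evaluate both sides of~\eqref{eq:qy} under this substitution.

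\textbf{Left-hand side.} First, $x_\varepsilon y_\varepsilon^{m-1}\mapsto A^{\varepsilon}A^{-(m-1)\varepsilon}=A^{(2-m)\varepsilon}$. Next, $Y_\varepsilon/y_\varepsilon\mapsto -A^{3\varepsilon}/A^{-\varepsilon}=-A^{4\varepsilon}$. The left side therefore becomes $A^{(2-m)\varepsilon}[m]_{-A^{4\varepsilon}}$, which is exactly the left side of the corollary.

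\textbf{Right-hand side, case $d=0$.} This corresponds to $v=0$, and the formula gives $m\,x_\varepsilon y_\varepsilon^{m-1}\mapsto mA^{(2-m)\varepsilon}$. As a sanity check: $d=0$ means $A^4=-1$, so $-A^{4\varepsilon}=1$ and $[m]_1=m$, which agrees.

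\textbf{Right-hand side, case $d\neq0$.} Here the formula gives $(Y_\varepsilon^m-y_\varepsilon^m)/v$. Substituting yields $\bigl((-A^{3\varepsilon})^m-A^{-m\varepsilon}\bigr)/d$. The only care needed is the sign bookkeeping:
\[
(-A^{3\varepsilon})^m=(-1)^mA^{3m\varepsilon}.
\]
The paper writes this as $(-A)^{3m\varepsilon}$. That equals $(-1)^{3m\varepsilon}A^{3m\varepsilon}=(-1)^mA^{3m\varepsilon}$, since $3m\varepsilon\equiv m \pmod 2$ for $\varepsilon=\pm1$. So the two forms agree.

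\textbf{Justification of the specialization.} Lemma~\ref{lem:qxy} was proved in the fraction field of $\mathbb Z[\Lambda]/P$, where $P$ is the kernel of the Theorem~\ref{T41} map. This kernel is prime and contains none of the variables. The $d\neq0$ case divides by $v$, which is legitimate there; alternatively, one can check directly that
\[
(-A^{3\varepsilon}-A^{-\varepsilon})\,[m]_{-A^{4\varepsilon}}A^{(1-m)\varepsilon}=(-A^{3\varepsilon})^m-A^{-m\varepsilon}
\]
by the telescoping geometric sum, and then divide by $d=A^{-\varepsilon}(-A^{3\varepsilon}-A^{-\varepsilon})$.

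The main obstacle is only this sign/exponent bookkeeping, including checking that it holds symmetrically for $\varepsilon=-1$. I expect no conceptual difficulty.
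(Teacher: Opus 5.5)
Your proposal is correct and follows the paper's route: the corollary is the specialization of \eqref{eq:qy} in Lemma~\ref{lem:qxy} under the substitution of Theorem~\ref{T41} (so that $v\mapsto d$). Your identification $(-A^{3\varepsilon})^m=(-A)^{3m\varepsilon}$ and your direct telescoping check are exactly the bookkeeping the paper leaves implicit; the paper also remarks that using \eqref{eq:qx} instead yields the same formula with $\varepsilon$ replaced by $-\varepsilon$.
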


Note that $d=0$ is equivalent to $A^{-4}=-1$.
In the computation of the Jones polynomial, this occurs precisely
when $t=-1$.

For even indices $2j$, we have
\[
\widehat{Q}_{2j}=\widehat{P}_{|a_{2j}|},
\qquad
\varepsilon_{2j}=\sign(a_{2j}),
\]
so the substitutions required to obtain $K_n$ from $T_n$ are
\begin{eqnarray} \label{eq:Xe} X_{2j}&\mapsto &T(\widehat{Q}_{2j}- e)\mapsto (-A)^{-3a_{2j}},\\ \label{eq:Ye} Y_{2j}&\mapsto &T(\widehat{Q}_{2j}/e)\mapsto A^{a_{2j} -2\ve_{2j}}[|a_{2j}|]_{-A^{-4\ve_{2j}}}+ A^{a_{2j}}\cdot d,\\ \label{eq:xe} x_{2j}&\mapsto &T_L(\widehat{Q}_{2j}, e)\mapsto A^{a_{2j}},\\ \label{eq:ye} y_{2j}&\mapsto &T_C(\widehat{Q}_{2j}, e)\mapsto A^{a_{2j} -2\ve_{2j}}[|a_{2j}|]_{-A^{-4\ve_{2j}}}, \end{eqnarray}
(Observe that $Y_{2j}$ does not appear explicitly in the final
expression for $K_n$.)

Similarly, for odd indices $2j+1$,
\[
\widehat{Q}_{2j+1}
=
\widehat{P}_{|a_{2j+1}|}^*,
\qquad
\varepsilon_{2j+1}=\sign(a_{2j+1}),
\]
and we obtain
\begin{align}
\label{eq:Xo}
X_{2j+1}
&\longmapsto
A^{-a_{2j+1}+2\varepsilon_{2j+1}}
[|a_{2j+1}|]_{-A^{4\varepsilon_{2j+1}}}
+
A^{-a_{2j+1}} d,
\\
\label{eq:Yo}
Y_{2j+1}
&\longmapsto
(-A)^{3a_{2j+1}},
\\
\label{eq:xo}
x_{2j+1}
&\longmapsto
A^{-a_{2j+1}+2\varepsilon_{2j+1}}
[|a_{2j+1}|]_{-A^{4\varepsilon_{2j+1}}},
\\
\label{eq:yo}
y_{2j+1}
&\longmapsto
A^{-a_{2j+1}}.
\end{align}

Finally, $X_{2j+1}$ appears in the formulation of $K_n$ only for $j=0$.
In that case,
\begin{equation}
\label{eq:X1rule}
X_1
\longmapsto
A^{-a_1+2\varepsilon_1}
[|a_1|]_{-A^{4\varepsilon_1}}
+
A^{-a_1}(-A^2-A^{-2}).
\end{equation}

Applying the substitution rules above to~\eqref{u_n}, we obtain
\begin{align*}
u_{2j+1}
&\longmapsto
X_{2j} y_{2j+1}
\longmapsto
(-A)^{-3a_{2j}} A^{-a_{2j+1}}
=
(-1)^{a_{2j}}
A^{-3a_{2j}-a_{2j+1}},
\\
u_{2j}
&\longmapsto
Y_{2j-1} x_{2j}
\longmapsto
(-A)^{3a_{2j-1}} A^{a_{2j}}
=
(-1)^{a_{2j-1}}
A^{3a_{2j-1}+a_{2j}}.
\end{align*}

These two cases may be summarized as
\begin{equation}
\label{utoA}
u_k
\longmapsto
(-1)^{a_{k-1}}
A^{(-1)^k(3a_{k-1}+a_k)}.
\end{equation}

Similarly, applying the substitution rules to~\eqref{v_n} yields
\begin{align*}
v_{2j+1}
&\longmapsto
x_{2j+1}
\longmapsto
A^{-a_{2j+1}+2\varepsilon_{2j+1}}
[|a_{2j+1}|]_{-A^{4\varepsilon_{2j+1}}},
\\
v_{2j}
&\longmapsto
y_{2j}
\longmapsto
A^{a_{2j}-2\varepsilon_{2j}}
[|a_{2j}|]_{-A^{-4\varepsilon_{2j}}}.
\end{align*}

These may be combined into the uniform expression
\begin{equation}
\label{vtoA}
v_k
\longmapsto
A^{(-1)^{k-1}(2\varepsilon_k-a_k)}
[|a_k|]_{-A^{(-1)^{k-1}4\varepsilon_k}}.
\end{equation}

\medskip
We are now ready to compute $K_n$.
Combining Theorem~\ref{thm:tilings}
with~\eqref{eq:X1rule}, \eqref{utoA}, and~\eqref{vtoA},
we obtain the following explicit description.

\begin{corollary}\label{Cor_K_n}
The Kauffman bracket
\[
K_n=K([0,a_1,a_2,\ldots,a_n])
\]
is the total weight of all tilings of a path of length $n$
by $1\times1$ and $2\times1$ tiles, with weights assigned as follows:
\begin{enumerate}
\item A $1\times1$ tile in position $1$ has weight
\[
A^{-a_1+2\varepsilon_1}
[|a_1|]_{-A^{4\varepsilon_1}}
+
A^{-a_1} d.
\]

\item A $1\times1$ tile in position $k>1$ has weight
\[
A^{(-1)^{k-1}(2\varepsilon_k-a_k)}
[|a_k|]_{-A^{(-1)^{k-1}4\varepsilon_k}}.
\]

\item A $2\times1$ tile covering $(k,k+1)$ has weight
\[
(-1)^{a_k}
A^{(-1)^{k+1}(3a_k+a_{k+1})}.
\]
\end{enumerate}
\end{corollary}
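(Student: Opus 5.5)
The plan is to compose three earlier results. By Theorem~\ref{structure}, the Tait graph $G$ of the standard diagram of $[0,a_1,\ldots,a_n]$ is the colored tensor product $G_n\otimes_1\widehat{Q}_1\cdots\otimes_n\widehat{Q}_n$, where each edge $e_k$ of the core graph carries its own color. By Theorem~\ref{Ttensor}, applied once per color, $T(G)$ is obtained from $T_n=T(G_n)$ by substituting for $X_k,x_k,Y_k,y_k$ the four pointed invariants of $\widehat{Q}_k$. We then apply the specialization of Theorem~\ref{T41}, which sends $T(G)$ to $\langle D\rangle=K_n$. Since all these maps are ring homomorphisms, it suffices to push the tiling expansion of Theorem~\ref{thm:tilings} through them tile by tile.

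Next I read off the image of each tile weight. A $1\times1$ tile at position $1$ has weight $X_1$, and \eqref{eq:X1rule} gives exactly item (1). A $1\times1$ tile at position $k>1$ has weight $v_k$: this is $x_k$ for odd $k$ and $y_k$ for even $k$, matching items (ii)--(iii) of Theorem~\ref{thm:tilings}. Its image is \eqref{vtoA}, which is item (2). A $2\times1$ tile covering $(k,k+1)$ has weight $u_{k+1}$: this is $Y_kx_{k+1}$ for $k$ odd and $X_ky_{k+1}$ for $k$ even. By \eqref{utoA} with index $k+1$, its image is
\[
(-1)^{a_k}A^{(-1)^{k+1}(3a_k+a_{k+1})},
\]
which is item (3). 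Summing over tilings then gives $K_n$.

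The main obstacle is checking that the substitutions \eqref{eq:Xe}--\eqref{eq:yo} are correct when $a_i<0$. In that case the box has $|a_i|$ crossings and color $\varepsilon_i=-1$, so the rules \eqref{eq:KB1} must be applied with $m=|a_i|$. I would verify that, for example, $(-A)^{-3m\varepsilon}$ with $m\varepsilon=a_i$ equals $(-A)^{-3a_i}$, and that the sign factor $(-1)^{3a_i}=(-1)^{a_i}$ is unaffected by the sign of $a_i$. Similarly, $A^{(m-2)\varepsilon}=A^{a_i-2\varepsilon_i}$. The second check is the parity-dependent choice of $\widehat{P}$ versus $\widehat{P}^*$. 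Even boxes use $\widehat{P}_{|a|}$, with $X\mapsto T(\widehat{P}-e)$ and $x\mapsto T_L$. Odd boxes use the dual graph, with $Y\mapsto(-A)^{3a}$ and $y\mapsto A^{-a}$. Once this bookkeeping is confirmed, the exponent $(-1)^{k-1}$ in \eqref{vtoA} is simply the uniform encoding of the two cases.

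I would also note that no division is needed anywhere. Theorem~\ref{thm:tilings} is a polynomial identity, so the argument remains valid at $d=0$.
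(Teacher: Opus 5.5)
Your proposal is correct and follows the paper's own argument: the corollary is obtained exactly by pushing the tiling expansion of Theorem~\ref{thm:tilings} through the tensor-product and Kauffman substitutions tile by tile, using \eqref{eq:X1rule}, \eqref{vtoA}, and \eqref{utoA} (with index $k+1$ for a $2\times1$ tile on $(k,k+1)$). Your bookkeeping for negative $a_i$ (taking $m=|a_i|$, so that $m\varepsilon_i=a_i$) is accurate, and it makes explicit what the paper leaves implicit in deriving \eqref{eq:Xe}--\eqref{eq:yo}.
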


An equivalent expression is obtained by substituting
\eqref{utoA} and~\eqref{vtoA}
directly into the matrix formulation~\eqref{eq:Tmatrixu}.
The verification is straightforward and is omitted.

\begin{example}{\em 
Let $D=[0,2,1,3,3]$ be the two-component link,
with the components oriented so that $w(D)=-5$.
There are five tilings of the $1\times4$ strip:
\[
\begin{aligned}
&(1\times1,1\times1,1\times1,1\times1), \\
&(1\times1,1\times1,2\times1), \\
&(1\times1,2\times1,1\times1), \\
&(2\times1,1\times1,1\times1), \\
&(2\times1,2\times1).
\end{aligned}
\]
Their combined weights are
\[
\begin{aligned}
&-A^{-5}(1+A^8)(1-A^{4}+A^8)(1-A^{-4}+A^{-8}), \\
&A^7(1+A^8), \\
&A^{-9}(1+A^{8})(1-A^{-4}+A^{-8}), \\
&A^7(1-A^{4}+A^8)(1-A^{-4}+A^{-8}), \\
&-A^{-19},
\end{aligned}
\]
respectively.
Summing these contributions,
multiplying by $(-A^3)^{-w(D)}=-A^{15}$,
and substituting $A^{-4}=t$, we obtain
\[
J(D)
=
t^{-\frac12}
\bigl(
- t+2-4t^{-1}+5t^{-2}-6t^{-3}
+6t^{-4}-6t^{-5}+3t^{-6}-2t^{-7}+t^{-8}
\bigr).
\]
}
\end{example}

We next compute the substitutions into $u_k!!$ in order to apply
Theorem~\ref{thm:coremain}.
Recall that $u_1=y_1\mapsto A^{-a_1}$.
Using~\eqref{utoA}, we obtain
\begin{align}
u_{2j+1}!!
&\longmapsto
\prod_{k=0}^{j}
(-1)^{a_{2k}}
A^{-(3a_{2k}+a_{2k+1})}
\nonumber\\
&=
(-A^{-2})^{a_2+a_4+\cdots+a_{2j}}
A^{-(a_1+\cdots+a_{2j+1})},
\label{ueven}
\\[6pt]
u_{2j+2}!!
&\longmapsto
\prod_{k=0}^{j}
(-1)^{a_{2k+1}}
A^{3a_{2k+1}+a_{2k+2}}
\nonumber\\
&=
(-A^{2})^{a_1+a_3+\cdots+a_{2j+1}}
A^{a_1+\cdots+a_{2j+2}}.
\label{uodd}
\end{align}

Taking quotients gives
\[
\frac{u_{2j+1}!!}{u_{2j+2}!!}
\longmapsto
(-A^{-4})^{a_1+\cdots+a_{2j+1}}
A^{-a_{2j+2}},
\]
and, similarly,
\[
\frac{u_{2j}!!}{u_{2j+1}!!}
\longmapsto
(-A^{4})^{a_1+\cdots+a_{2j}}
A^{a_{2j+1}}.
\]

Using~\eqref{vtoA}, we obtain
\begin{align}
\label{eq:vodd}
\frac{v_{2j+2}u_{2j+1}!!}{u_{2j+2}!!}
&\longmapsto
A^{-2\varepsilon_{2j+2}}
[|a_{2j+2}|]_{-A^{-4\varepsilon_{2j+2}}}
(-A^{-4})^{a_1+\cdots+a_{2j+1}},
\\[6pt]
\label{eq:veven}
\frac{v_{2j+1}u_{2j}!!}{u_{2j+1}!!}
&\longmapsto
A^{2\varepsilon_{2j+1}}
[|a_{2j+1}|]_{-A^{4\varepsilon_{2j+1}}}
(-A^{4})^{a_1+\cdots+a_{2j}}.
\end{align}

These may be summarized in the uniform form
\begin{equation}
\label{eq:vrule}
\frac{v_{k+1}u_k!!}{u_{k+1}!!}
\longmapsto
A^{2(-1)^k\varepsilon_{k+1}}
[|a_{k+1}|]_{-A^{4(-1)^k\varepsilon_{k+1}}}
(-A^{4(-1)^k})^{a_1+\cdots+a_k}.
\end{equation}

Similarly,
\eqref{ueven} and \eqref{uodd} may be summarized as
\begin{equation}
\label{eq:urule}
u_{k+1}!!
\longmapsto
(-A^{2(-1)^{k+1}})^{a_k+a_{k-2}+\cdots}
A^{(-1)^{k+1}(a_1+\cdots+a_{k+1})}.
\end{equation}

\begin{definition}
\label{def:go}
Define
\[
\omega_k(a_1,\ldots,a_k)
=
(-A^{2(-1)^k})^{a_{k-1}+a_{k-3}+\cdots}
A^{(-1)^k(a_1+\cdots+a_k)},
\]
and
\[
\gamma_k(a_1,\ldots,a_k)
=
A^{2(-1)^{k-1}\varepsilon_k}
[|a_k|]_{-A^{4(-1)^{k-1}\varepsilon_k}}
(-A^{4(-1)^{k-1}})^{a_1+\cdots+a_{k-1}}.
\]
\end{definition}

Substituting into Theorem~\ref{thm:coremain} yields the following.

\begin{corollary}
\label{cor:Kbfinal}
The Kauffman bracket
\[
K_n=K([0,a_1,\ldots,a_n])
\]
is given by
\[
K_n
=
\omega_n(a_1,\ldots,a_n)\,\kappa_n,
\]
where $\kappa_n$ satisfies
\[
\begin{bmatrix}
\kappa_n\\
\kappa_{n-1}
\end{bmatrix}
=
\begin{bmatrix}
\gamma_n & 1\\
1 & 0
\end{bmatrix}
\cdots
\begin{bmatrix}
\gamma_1 & 1\\
1 & 0
\end{bmatrix}
\begin{bmatrix}
1\\
d
\end{bmatrix}.
\]
\end{corollary}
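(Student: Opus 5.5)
The plan is to take Theorem~\ref{thm:coremain} as the starting point and push it through the Kauffman bracket specialization of Theorem~\ref{T41}, composed with the tensor substitutions of Theorem~\ref{Ttensor}. By Theorem~\ref{structure}, $K_n$ is the image of $T_n=T(G_n)$ under the substitution that sends the variables of color $k$ to the pointed Tutte invariants of $\widehat{Q}_k$. These, after the Kauffman specialization, are given by \eqref{eq:Xe}--\eqref{eq:yo}. The composite map is a ring homomorphism into $\mathbb{Z}[A,A^{-1}]$, so it commutes with the matrix recursion.

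First I will check that Theorem~\ref{thm:coremain} applies. It is stated in the fraction field $K$ of $\mathbb Z[\Lambda]/P$, with $P$ prime, containing $I_1$, and avoiding the variables. The natural choice of $P$ is the kernel of the composite map to $\mathbb Z[A^{\pm1}]$. This kernel is prime because the target is a domain.

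\begin{itemize}
\item It contains $I_1$. Each substituted quadruple $(X_k,x_k,Y_k,y_k)$ satisfies \eqref{eq:ycondu} and \eqref{eq:xcondv} with $u=v=d$. For the path graphs this can be read off directly from \eqref{eq:Pm-e}--\eqref{eq:Pm*TL} together with Lemma~\ref{lem:qxy}. For example, $T(\widehat P_m-e)-T_L(\widehat P_m,e)=X^m-x^m=u\,T_C$ when $u\neq 0$, and the $u=0$ case is handled the same way. This is also what \eqref{TLC1}--\eqref{TLC2} express.
\item No variable maps to $0$, since every substituted value is a nonzero Laurent polynomial (a monomial, or a monomial times $[m]_q$ plus a possible $d$-term). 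The single exception is checking that $X_1$ and $Y_{2j}$ are nonzero, which is clear for generic $A$.
\end{itemize}

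Hence the identity $T_n=u_n!!\,S_n$ with the stated matrix product holds in $K$, and we may apply the map entrywise, sending $u\mapsto d$.

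Next I will compute the images of the matrix entries and of the prefactor. The entry $v_{k}u_{k-1}!!/u_k!!$ maps to $\gamma_k$ by \eqref{eq:vrule} with $k$ replaced by $k-1$, and this agrees with Definition~\ref{def:go}. The prefactor $u_n!!$ maps to $\omega_n$ by \eqref{eq:urule} with index shifted. I will double-check the parity bookkeeping in \eqref{ueven}--\eqref{uodd}, including the convention $u_1=y_1\mapsto A^{-a_1}$ and $u_0!!=1$, since this is where sign exponents such as $(-A^{\pm2})^{a_{k-1}+a_{k-3}+\cdots}$ can easily be misaligned. The boundary vector $\begin{bmatrix}1\\ u\end{bmatrix}$ maps to $\begin{bmatrix}1\\ d\end{bmatrix}$. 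Defining $\kappa_k$ as the image of $S_k$ then gives the matrix identity, and $K_n=\omega_n\kappa_n$ follows.

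The main obstacle is legitimacy rather than computation. $S_n$ involves divisions by $u_k!!$, so one must ensure the specialization extends to the relevant elements of $K$. The denominators $u_k!!$ map to signed monomials in $A$, which are units in $\mathbb Z[A^{\pm1}]$. So the homomorphism extends to the localization at the multiplicative set generated by the $u_k$, and every quantity in Theorem~\ref{thm:coremain} lies in that localization. With this observation the remaining steps are exactly the substitutions already tabulated before the statement.
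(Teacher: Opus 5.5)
Your proposal is correct and follows the paper's route: the paper obtains Corollary~\ref{cor:Kbfinal} by substituting \eqref{eq:vrule}, \eqref{eq:urule} and $u\mapsto d$ into Theorem~\ref{thm:coremain}, identifying the images with $\gamma_k$ and $\omega_n$, exactly as you do. Your explicit check that the kernel $P$ of the composite specialization is a prime ideal containing $I_1$ and avoiding the variables makes precise a step the paper leaves implicit. Since $\mathbb{Z}[\Lambda]/P$ then embeds in $\mathbb{Z}[A^{\pm1}]$, the whole fraction field $K$ embeds in $\mathbb{Q}(A)$, so your separate localization argument is valid but not actually needed.
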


Here $d=-A^{2}-A^{-2}$ as in~\eqref{eq:d}.

\begin{remark}\label{rem:gamma}
{\em 
Using \eqref{eq:tsign} we may express $\gamma_k(a_1,\ldots,a_k)$ as $$ \gamma_k(a_1,\ldots,a_k)=A^{2\sign_t(a_k)}[|a_{k}|]_{-A^{4\sign_{t}(a_k)}} (-A^{4(-1)^{k-1}})^{a_1+a_2+\cdots+a_{k-1}}. $$ }
\end{remark}

Finally, substituting~\eqref{eq:Xe}--\eqref{eq:yo}
into Theorem~\ref{thm:lr} gives a fully explicit matrix product.

\begin{theorem}
\label{thm:lrk}
The Kauffman bracket satisfies
\[
K_n
=
(-A)^{3\sum_{k=1}^n (-1)^{k-1}a_k}
\;\rho_n,
\]
where
\[
\rho_n
=
\begin{bmatrix}
1 & 0
\end{bmatrix}
M_n\cdots M_1
\begin{bmatrix}
1\\
d
\end{bmatrix},
\]
with
\begin{equation}
\label{eq:Msubs}
M_k
=
\begin{bmatrix}
- A^{2(-1)^k\varepsilon_k}
[|a_k|]_{-A^{4(-1)^k\varepsilon_k}}
&
(-A^{4(-1)^k})^{a_k}
\\[6pt]
1 & 0
\end{bmatrix}.
\end{equation}
\end{theorem}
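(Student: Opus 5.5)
The plan is to specialize Theorem~\ref{thm:lr} under the substitution rules \eqref{eq:Xe}--\eqref{eq:yo}, together with $u\mapsto d$. By Theorem~\ref{T41} and \eqref{eq:d}, this is the value of $u$ in the quotient ring. By Theorems~\ref{Ttensor} and~\ref{structure}, the image of $T_n$ under these rules is $K_n$. So it suffices to compute the images of two things: the prefactor $\prod_j X_{2j}\prod_j Y_{2j-1}$, and each matrix $M_k$.

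First, the prefactor. By \eqref{eq:Xe}, $X_{2j}\mapsto(-A)^{-3a_{2j}}$, and by \eqref{eq:Yo}, $Y_{2j-1}\mapsto(-A)^{3a_{2j-1}}$. These are monomials, so the product is $(-A)^{3\sum_{k}(-1)^{k-1}a_k}$, as claimed. The same formulas show that these variables map to nonzero elements, so the divisions in Theorem~\ref{thm:lr} are legitimate in the field of fractions.

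Second, the matrix entries. For $k$ even, \eqref{eq:xe} gives
$$\frac{x_k}{X_k}\mapsto \frac{A^{a_k}}{(-A)^{-3a_k}}=(-1)^{a_k}A^{4a_k}=(-A^{4})^{a_k},$$
which is the upper-right entry of \eqref{eq:Msubs} when $(-1)^k=1$. For the upper-left entry, \eqref{eq:ye} gives
$$\frac{y_k}{X_k}\mapsto(-1)^{a_k}A^{4a_k-2\varepsilon_k}[|a_k|]_{-A^{-4\varepsilon_k}}.$$
To bring this to the stated form, I will use the reversal identity $[m]_{q^{-1}}=q^{-(m-1)}[m]_q$ with $q=-A^{4\varepsilon_k}$ and $m=|a_k|$. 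Since $\varepsilon_k m=a_k$, this yields
$$[|a_k|]_{-A^{-4\varepsilon_k}}=(-1)^{a_k-1}A^{-4a_k+4\varepsilon_k}[|a_k|]_{-A^{4\varepsilon_k}}.$$
Multiplying out, the signs combine to $(-1)^{2a_k-1}=-1$ and the powers of $A$ reduce to $A^{2\varepsilon_k}$. The entry is therefore $-A^{2\varepsilon_k}[|a_k|]_{-A^{4\varepsilon_k}}$, as required.

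For $k$ odd, the computation is the mirror image. We divide by $Y_k\mapsto(-A)^{3a_k}$ using \eqref{eq:xo} and \eqref{eq:yo}. This gives $y_k/Y_k\mapsto(-A^{-4})^{a_k}$. The entry $x_k/Y_k$ becomes $(-1)^{a_k}A^{-4a_k+2\varepsilon_k}[|a_k|]_{-A^{4\varepsilon_k}}$, and the same reversal identity (now with $q=-A^{-4\varepsilon_k}$) turns it into $-A^{-2\varepsilon_k}[|a_k|]_{-A^{-4\varepsilon_k}}$. Both parities are thus captured by the exponent $(-1)^k$ in \eqref{eq:Msubs}.

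The only real obstacle is the sign and exponent bookkeeping in the reversal identity, which must hold for both signs of $\varepsilon_k$. It does, because $\varepsilon_k|a_k|=a_k$ and $(-1)^{|a_k|}=(-1)^{a_k}$. No case analysis on $d=0$ is needed, since only the polynomial forms \eqref{eq:Xe}--\eqref{eq:yo} are used and not Corollary~\ref{cor:qA}.
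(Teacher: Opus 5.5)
Your proposal is correct and takes the same route as the paper, which obtains Theorem~\ref{thm:lrk} simply by substituting \eqref{eq:Xe}--\eqref{eq:yo} (with $u\mapsto d$) into Theorem~\ref{thm:lr}. Your computation of the prefactor and of the entries of each $M_k$ fills in the bookkeeping the paper leaves implicit. This includes the reversal identity $[m]_{q^{-1}}=q^{-(m-1)}[m]_q$, which is needed to put the upper-left entries in the stated form.
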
 

\section{Substitutions into the Kauffman bracket and the Jones polynomial}
\label{sec:subs}

Corollary~\ref{cor:qA} shows that the formulas of
Section~\ref{sec:Kbracket} simplify considerably once a specific
substitution is fixed and we determine whether $d=0$ holds.
We treat the two possible cases separately.

We first consider the special case $d=0$.
When evaluating the Jones polynomial, this corresponds to the substitution
$t=-1$.
In this situation it is convenient to assume that the rational link is
represented by an alternating diagram.

\begin{theorem}
\label{thm:t=-1}
Let $D$ be a rational link diagram encoded by the continued fraction
\[
[0,a_1,a_2,\ldots,a_n]=\frac{p}{q},
\]
where $a_k>0$ for all $k$, and $p,q>0$ are relatively prime.
Then
\[
V_L(-1)
=
(-1)^{\binom{n}{2}} i^n
\left(\frac{1+i}{\sqrt{2}}\right)^{w(D)+\sum_{k=1}^n (-1)^k a_k}
\, q.
\]
\end{theorem}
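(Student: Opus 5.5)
The plan is to specialize the Lawrence--Rosenstein type formula of Theorem~\ref{thm:lrk} to $d=0$, i.e.\ to $A^4=-1$, and then do root-of-unity bookkeeping. The diagram is alternating with all $a_k>0$, so every $\varepsilon_k=+1$. When $d=0$ we have $-A^{\pm4}=1$, hence $[|a_k|]_{-A^{4(-1)^k}}=[a_k]_1=a_k$ and $(-A^{4(-1)^k})^{a_k}=1$. Put $c=A^2$, so that $c^2=-1$ and $A^{-2}=-c$. The matrices in~\eqref{eq:Msubs} then become
\[
M_k=\begin{bmatrix}\alpha_k a_k & 1\\ 1 & 0\end{bmatrix},\qquad \alpha_k=-A^{2(-1)^k}=\begin{cases} c, & k \text{ odd},\\ -c, & k\text{ even},\end{cases}
\]
and the initial vector is $[1,0]^T$. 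Consequently $\rho_n$ is the continuant $K(\alpha_1a_1,\ldots,\alpha_na_n)$.

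The key combinatorial step is to compare this continuant with the ordinary one $K(a_1,\ldots,a_n)$. For the latter, the standard identity for $[0,a_1,\ldots,a_n]=p/q$ in lowest terms gives $K(a_1,\ldots,a_n)=q$; this can be read off from the matrix form of Theorem~\ref{thm:coremain}, or from the classical recurrence. I will use the tiling expansion of a continuant, analogous to Theorem~\ref{thm:tilings}: each monomial is the product of the entries on the $1\times1$ tiles, and each $2\times1$ tile contributes $1$.

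For any two adjacent positions, $\alpha_k\alpha_{k+1}=-c^2=1$. So in every tiling the product of the $\alpha$'s on the $1\times 1$ tiles equals $\prod_{k=1}^n\alpha_k$, because the positions removed in pairs by $2\times1$ tiles contribute a factor $1$. Every monomial therefore carries the same factor, and
\[
\rho_n=\Bigl(\prod_{k=1}^n\alpha_k\Bigr)q=(-1)^{\lfloor n/2\rfloor}A^{2n}q.
\]
Moreover $(-1)^{\lfloor n/2\rfloor}=(-1)^{\binom n2}$.

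Finally, Theorem~\ref{thm:lrk} gives $K_n=(-A)^{3\sum(-1)^{k-1}a_k}\rho_n$, and I substitute this into~\eqref{eq51}:
\[
V_L(-1)=(-A^{-3})^{w(D)}(-A)^{3\sum_k(-1)^{k-1}a_k}(-1)^{\binom n2}A^{2n}q.
\]
The remaining work, and the main obstacle, is to fix the branch $A=t^{-1/4}$ at $t=-1$ consistently with the paper's conventions, presumably $A=e^{-i\pi/4}$ or its conjugate. With that branch fixed, I must show that $A^{2n}=\pm i^n$ and that $(-A^{-3})^{w}(-A)^{-3\sum(-1)^k a_k}$ collapses to $\bigl(\frac{1+i}{\sqrt2}\bigr)^{w+\sum(-1)^k a_k}$. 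Here $-A^{-3}$ and $(-A)^{-3}$ should both equal $e^{i\pi/4}$ when $A=e^{-i\pi/4}$: indeed $-A^{-3}=-e^{3i\pi/4}=e^{-i\pi/4}$, so a sign or parity check is needed.

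I expect to handle the residual discrepancies with parity facts. First, for a two-component or one-component rational link, the parities of $w(D)$ and of $\sum(-1)^k a_k$ are related. Second, the signs $(-1)^{a_k}$ hidden in $(-A)^{3a_k}$ versus $A^{3a_k}$ combine into $(-1)^{\sum a_k}$. I will reduce all exponents modulo $8$ and verify this against the example $[0,2,1,3,3]$ before finalizing.
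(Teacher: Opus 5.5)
Everything through $\rho_n=(-1)^{\binom n2}A^{2n}q$ is correct, and it is essentially the paper's proof. The paper evaluates Corollary~\ref{cor:Kbfinal} rather than Theorem~\ref{thm:lrk}, but at $A^4=-1$ the two coincide term by term. Its entries $\gamma_k=(-1)^{k-1}ia_k$ are exactly your $\alpha_ka_k$. Its prefactor $\omega_n=A^{\sum_k(-1)^ka_k}$ equals your $(-A)^{3\sum_k(-1)^{k-1}a_k}$. Its Euler--Minding step uses the same fact that adjacent coefficients multiply to $1$.

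The gap is the final step, which you leave open and misdiagnose. There is no branch to guess: the constants $\frac{1+i}{\sqrt2}$ and $i^n$ in the statement force $A=\frac{1+i}{\sqrt2}$, i.e.\ $A^2=i$ (so $t^{1/2}=A^{-2}=-i$). That is the substitution the paper makes. Your expectation that $-A^{-3}$ and $(-A)^{-3}$ equal $e^{i\pi/4}$ when $A=e^{-i\pi/4}$ is also off. Once $A^4=-1$, one has identically $(-A)^{-3}=-A^{-3}=A$ and $(-A)^{3}=A^{-1}$. With $A=e^{i\pi/4}$ this gives $A^{2n}=i^n$ and
\[
V_L(-1)=A^{w(D)}\,A^{\sum_{k}(-1)^ka_k}\,(-1)^{\binom n2}i^n q,
\]
which is the statement verbatim. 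No parity argument is needed.

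Your fallback plan, taking $A=e^{-i\pi/4}$ and absorbing the ``residual discrepancies'' by parity facts, would fail. With that choice your formula produces exactly the complex conjugate of the stated right-hand side. Set $E=w(D)+\sum_k(-1)^ka_k$; it is even, since both summands are congruent mod $2$ to the number of crossings. The two values then differ by the factor $(-1)^{n+E/2}$. This factor is $1$ for knots but $-1$ for two-component links. For such links $V_L\in t^{1/2}\mathbb{Z}[t^{\pm1}]$, so $V_L(-1)$ is purely imaginary and its sign genuinely depends on the choice of $\sqrt{t}$.

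Concretely, take $[0,2,1,3,3]$ with $w(D)=-5$, so $n=4$, $E=-6$, $q=36$. The paper's Jones polynomial gives $36t^{-1/2}$ at $t=-1$. This is $36i$ for $A^2=i$, matching the statement, and $-36i$ for $A^2=-i$. No reduction of exponents modulo $8$ can remove this sign; the fix is simply to commit to $A^2=i$.
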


\begin{proof}
We evaluate the Kauffman bracket using
Corollary~\ref{cor:Kbfinal} at the primitive eighth root of unity
\[
A=\frac{1+i}{\sqrt{2}}=e^{i\frac{\pi}{4}}.
\]
For this substitution we have
\[
A^2=i, \qquad A^4=-1, \qquad -A^2-A^{-2}=0.
\]

For an alternating rational diagram with $a_k>0$,
the twisting sign of the $k$th twist box is
\[
\sign_t(a_k)=(-1)^{k-1}.
\]
By Remark~\ref{rem:gamma},
\[
\gamma_k(a_1,\ldots,a_k)
=
i^{(-1)^{k-1}} a_k
=
(-1)^{k-1} i\, a_k.
\]

Hence Corollary~\ref{cor:Kbfinal} becomes
\[
\begin{bmatrix}
\kappa_n\\
\kappa_{n-1}
\end{bmatrix}
=
\begin{bmatrix}
(-1)^{n-1} i\, a_n & 1\\
1 & 0
\end{bmatrix}
\cdots
\begin{bmatrix}
i\, a_1 & 1\\
1 & 0
\end{bmatrix}
\begin{bmatrix}
1\\
0
\end{bmatrix}.
\]

Therefore $\kappa_n/\kappa_{n-1}$ equals the continued fraction
\[
[(-1)^{n-1} i\, a_n,
(-1)^{n-2} i\, a_{n-1},
\ldots,
-i\, a_2,
i\, a_1].
\]

By the Euler–Minding formulas
(see~\cite[formula~(1.3)]{He}),
the numerator $\kappa_n$ is given by
\[
\kappa_n
=
\sum_{S\subseteq_e \{1,\ldots,n\}}
\prod_{k\notin S} (-1)^{k-1} i\, a_k=\sum_{S\subseteq_e \{1,\ldots,n\}}\frac{(-1)^{\binom{n}{2}}i^na_1\cdots a_n}{\prod_{k\in S} ((-1)^{k-1}ia_k)},
\]
where $S\subseteq_e$ denotes the even subsets,
i.e.\ unions of disjoint intervals of even cardinality.
Since each subset $S$ can be partitioned into disjoint subsets of the form $\{j,j+1\}$, and the terms in $\prod_{k\in S} ((-1)^{k-1}ia_k$ corresponding to $j$ and $j+1$ can be simplified as
$$
(-1)^{j-1}ia_j\cdot (-1)^jia_{j+1}=a_ja_{j+1},
$$
we have
$$
\sum_{S\subseteq_e \{1,\ldots,n\}}\frac{(-1)^{\binom{n}{2}}i^na_1\cdots a_n}{\prod_{k\in S} ((-1)^{k-1}ia_k)}=
(-1)^{\binom{n}{2}}i^n\sum_{S\subseteq_e \{1,\ldots,n\}}\frac{a_1\cdots a_n}{\prod_{k\in S} a_k}.
$$
The final summation equals the numerator of the continued fraction
$[a_1,a_2,\ldots,a_n]$,
which is the denominator $q$ of
$[0,a_1,\ldots,a_n]$.
Thus
\[
\kappa_n
=
(-1)^{\binom{n}{2}} i^n q.
\]

It remains to evaluate $\omega_n$ at $A=\frac{1+i}{\sqrt{2}}$.
Using $A^2=i$ and $-A^2-A^{-2}=0$,
a direct calculation gives
\[
\omega_n(a_1,\ldots,a_n)
=
\left(\frac{1+i}{\sqrt{2}}\right)^{\sum_{k=1}^n (-1)^k a_k}.
\]

Therefore
\[
K_n
=
(-1)^{\binom{n}{2}} i^n
\left(\frac{1+i}{\sqrt{2}}\right)^{\sum_{k=1}^n (-1)^k a_k}
q.
\]

Finally, since $A^4=-1$, we have $-A^{-3}=A$, hence
\[
(-A^{-3})^{w(D)}=A^{w(D)}
=
\left(\frac{1+i}{\sqrt{2}}\right)^{w(D)}.
\]
Multiplying by this factor yields the stated formula for $V_L(-1)$.
\end{proof}

\begin{remark}{\em 
According to Murasugi~\cite[Proposition~11.2.7]{Mu},
every link $L$ satisfies
\[
V_L(-1)=(-1)^{\alpha(L)-1}\Delta_L(-1),
\]
where $\alpha(L)$ is the number of components and $\Delta_L$
is the Alexander polynomial.
By~\cite[Proposition~6.1.5]{Mu}, for a knot $K$,
$|\Delta_K(-1)|$ equals the determinant of $K$.
Cromwell~\cite[Theorem~8.7.7]{Cromwell} shows that a rational link
encoded by $p/q$ has determinant $|q|$
(with the convention that Cromwell's numerator and denominator are interchanged relative to ours).
Theorem~\ref{thm:t=-1} is therefore consistent with these classical results
and determines the precise phase factor relating $V_L(-1)$ and $q$.
}
\end{remark}

Henceforth we assume that $d\neq 0$.
In this case, Corollary~\ref{cor:qA} may be restated as
\begin{equation}
\label{eq:qA}
A^{2\varepsilon}[m]_{-A^{4\varepsilon}}
=
\frac{(-1)^m A^{4m\varepsilon}-1}{d}.
\end{equation}

Using~\eqref{eq:qA}, Corollary~\ref{Cor_K_n} takes the following form.

\begin{corollary}
\label{cor:K_nnz}
Assume $d\neq 0$.
Then the Kauffman bracket
\[
K_n = K([0,a_1,a_2,\ldots,a_n])
\]
is the total weight of all tilings of a unit width path of length $n$ by
$1\times1$ and $2\times1$ tiles, with weights assigned as follows:
\begin{enumerate}
\item A $1\times1$ tile in position $1$ has weight
\[
\frac{(-A)^{3a_1}-A^{-a_1}}{d} + A^{-a_1}d;
\]
\item A $1\times1$ tile in position $k>1$ has weight
\[
\frac{(-A)^{3(-1)^{k-1}a_k}-A^{(-1)^k a_k}}{d};
\]
\item A $2\times1$ tile covering $(k,k+1)$ has weight
\[
(-1)^{a_k}A^{(-1)^{k+1}(3a_k+a_{k+1})}.
\]
\end{enumerate}
\end{corollary}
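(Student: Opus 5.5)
This is a direct substitution of \eqref{eq:qA} into the three tile weights of Corollary~\ref{Cor_K_n}. The $2\times1$ tile weights, item (3), do not involve any $q$-integer, so they carry over unchanged. Only the $1\times1$ weights need rewriting.

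For a $1\times1$ tile in position $k>1$, the weight in Corollary~\ref{Cor_K_n} is
\[
A^{(-1)^{k-1}(2\varepsilon_k-a_k)}[|a_k|]_{-A^{(-1)^{k-1}4\varepsilon_k}}
= A^{(-1)^k a_k}\cdot A^{2\varepsilon'}[m]_{-A^{4\varepsilon'}},
\]
where $\varepsilon'=(-1)^{k-1}\varepsilon_k$ and $m=|a_k|$.

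1. Apply \eqref{eq:qA} with $\varepsilon$ replaced by $\varepsilon'$, which is legitimate because \eqref{eq:qA} holds for either sign. This gives
\[
A^{2\varepsilon'}[m]_{-A^{4\varepsilon'}}=\frac{(-1)^{m}A^{4m\varepsilon'}-1}{d}.
\]
2. Use $m\varepsilon_k=a_k$ and $(-1)^{|a_k|}=(-1)^{a_k}$ to get
\[
(-1)^mA^{4m\varepsilon'}=(-1)^{a_k}A^{4(-1)^{k-1}a_k}.
\]
3. Multiply by $A^{(-1)^k a_k}=A^{-(-1)^{k-1}a_k}$. The product is
\[
\frac{(-1)^{a_k}A^{3(-1)^{k-1}a_k}-A^{(-1)^k a_k}}{d}.
\]
4. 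Finally, since $(-1)^{(-1)^{k-1}a_k}=(-1)^{a_k}$, we have $(-1)^{a_k}A^{3(-1)^{k-1}a_k}=(-A)^{3(-1)^{k-1}a_k}$, which is item (2).

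Item (1) is the case $k=1$ of the same computation: $\varepsilon'=\varepsilon_1$ and the prefactor is $A^{-a_1}$, giving $\frac{(-A)^{3a_1}-A^{-a_1}}{d}$. The additional summand $A^{-a_1}d$ from \eqref{eq:X1rule} is simply carried along unchanged.

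The only step needing care is the sign bookkeeping in step 4. When $\varepsilon_k=-1$ one has $m=-a_k$, so I will check $A^{4m\varepsilon'}=A^{4(-1)^{k-1}a_k}$ explicitly. I will also check that the parity sign $(-1)^m$ combines correctly into the power of $-A$ for negative exponents. Apart from that, the argument is routine exponent arithmetic.
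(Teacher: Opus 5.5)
Your proposal is correct and follows the paper's own argument. The paper justifies the corollary solely by the identity $A^{(-1)^{k-1}(2\varepsilon_k-a_k)}[|a_k|]_{-A^{(-1)^{k-1}4\varepsilon_k}}=A^{(-1)^k a_k}\,\frac{(-1)^{a_k}A^{4(-1)^{k-1}a_k}-1}{d}$ derived from \eqref{eq:qA}, which is exactly your steps 1--3 (with the $2\times1$ weights and the extra summand $A^{-a_1}d$ carried over unchanged), and your step 4, absorbing $(-1)^{a_k}$ into the power of $-A$, makes explicit a bookkeeping step the paper leaves implicit.
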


Indeed, for $k\ge1$, equation~\eqref{eq:qA} implies
\[
A^{(-1)^{k-1}(2\varepsilon_k-a_k)}
[|a_k|]_{-A^{(-1)^{k-1}4\varepsilon_k}}
=
A^{(-1)^k a_k}\,
\frac{(-1)^{a_k}A^{4(-1)^{k-1}a_k}-1}{d},
\]
which is exactly the replacement used for the $1\times1$ tile weights.

\medskip
Using~\eqref{eq:qA}, the parameters $\gamma_k(a_1,\ldots,a_k)$ may be simplified as
\begin{align}
\gamma_k(a_1,\ldots,a_k)
&=
A^{2(-1)^{k-1}\varepsilon_k}
[|a_k|]_{-A^{4(-1)^{k-1}\varepsilon_k}}
(-A^{4(-1)^{k-1}})^{a_1+\cdots+a_{k-1}}
\nonumber\\
&=
\frac{(-1)^{a_k}A^{4(-1)^{k-1}a_k}-1}{d}\,
(-A^{4(-1)^{k-1}})^{a_1+\cdots+a_{k-1}}.
\label{eq:gammaA}
\end{align}

Keeping in mind $t=A^{-4}$, equation~\eqref{eq:gammaA} may be rewritten as
\begin{equation}
\label{eq:gammat}
\gamma_k(a_1,\ldots,a_k)
=
\frac{(-t)^{(-1)^k(a_1+\cdots+a_k)}-
      (-t)^{(-1)^k(a_1+\cdots+a_{k-1})}}{d}.
\end{equation}
This yields a more compact expression for $\kappa_n$ in
Corollary~\ref{cor:Kbfinal}.

\medskip
Using~\eqref{eq:qA}, the substitution rule~\eqref{eq:Msubs} may be rewritten as
\[
M_k
\longmapsto
\left[\begin{matrix}
\dfrac{1-(-A^4)^{(-1)^k a_k}}{d} & (-A^4)^{(-1)^k a_k}\\[4pt]
1 & 0
\end{matrix}\right].
\]
Substituting $t=A^{-4}$ (so that $-A^4=-t^{-1}$) and simplifying gives the
following consequence of Theorem~\ref{thm:lrk}.

\begin{corollary}
\label{cor:lrk}
Assume $d\neq 0$.
Then
\[
K_n
=
(-A)^{3\sum_{k=1}^n (-1)^{k-1}a_k}\,\rho_n,
\]
where
\[
\rho_n
=
\left[\begin{matrix}1&0\end{matrix}\right]
\left[\begin{matrix}
\dfrac{1-(-t)^{(-1)^{n-1}a_n}}{d} & (-t)^{(-1)^{n-1}a_n}\\[4pt]
1&0
\end{matrix}\right]
\cdots
\left[\begin{matrix}
\dfrac{1-(-t)^{a_1}}{d} & (-t)^{a_1}\\[4pt]
1&0
\end{matrix}\right]
\left[\begin{matrix}1\\ d\end{matrix}\right].
\]
\end{corollary}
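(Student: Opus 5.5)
This follows directly from Theorem~\ref{thm:lrk}: the prefactor $(-A)^{3\sum_{k}(-1)^{k-1}a_k}$ and the outer vectors $[1\ 0]$ and $[1\ d]^T$ stay the same. So it suffices to show that, when $d\neq 0$, each matrix $M_k$ of \eqref{eq:Msubs} equals the $k$-th matrix in the displayed product. I will compare the two matrices entry by entry.

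\emph{Top-right entry.} In \eqref{eq:Msubs} it is $(-A^{4(-1)^k})^{a_k}$. The two sign cases give the same power:
\begin{itemize}
\item for $k$ even it is $(-A^4)^{a_k}$;
\item for $k$ odd it is $(-A^{-4})^{a_k}=(-A^4)^{-a_k}$, since $(-1)^{a_k}=(-1)^{-a_k}$.
\end{itemize}
Hence it equals $(-A^4)^{(-1)^k a_k}$. Because $t=A^{-4}$, we have $-A^4=(-t)^{-1}$, so the entry becomes $(-t)^{(-1)^{k-1}a_k}$. This matches the corollary.

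\emph{Top-left entry.} Set $\delta=(-1)^k\varepsilon_k$ and $m=|a_k|$. Applying \eqref{eq:qA} with $\varepsilon=\delta$ gives
\[
-A^{2\delta}[m]_{-A^{4\delta}}=\frac{1-(-1)^mA^{4m\delta}}{d}.
\]
Since $m\delta=(-1)^k a_k$ and $(-1)^m=(-1)^{a_k}$, we get $(-1)^mA^{4m\delta}=(-A^4)^{(-1)^ka_k}$. By the previous step this equals $(-t)^{(-1)^{k-1}a_k}$, which is the required top-left entry $\bigl(1-(-t)^{(-1)^{k-1}a_k}\bigr)/d$.

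The bottom rows of the two matrices are identical. The matrices therefore agree, and the corollary follows from Theorem~\ref{thm:lrk}.

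The main obstacle is bookkeeping of signs. One must check that \eqref{eq:qA} applies for both values of $\delta$, which holds because Corollary~\ref{cor:qA} is symmetric in $\varepsilon$. One must also confirm that $[|a_k|]$ together with $\varepsilon_k$ combine into the signed exponent $(-1)^k a_k$, using $|a_k|\varepsilon_k=a_k$.
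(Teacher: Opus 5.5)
Your proposal is correct and follows the same route as the paper. Like the paper, you rewrite each factor $M_k$ of Theorem~\ref{thm:lrk} entrywise using \eqref{eq:qA}, with $|a_k|\varepsilon_k=a_k$ and $(-1)^{|a_k|}=(-1)^{a_k}$, so that the entries are expressed in $(-A^4)^{(-1)^ka_k}$, and then substitute $t=A^{-4}$, i.e.\ $-A^4=(-t)^{-1}$; your explicit sign bookkeeping, which the paper only summarizes as ``simplifying,'' checks out.
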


\medskip
Exploring substitutions of roots of unity into the Jones polynomial is
a subject of intensive ongoing research, see the third remark on page
383 in~\cite{O}. In this exploration the following result may be helpful.
\begin{theorem}
\label{thm:rootunity}
Let $\delta>1$ be an integer such that $\delta\mid (|a_1|-1)$ and
$\delta\mid a_i$ for $i=2,\ldots,n$.
Let $\rho$ be a $\delta$th root of unity.
Then the Jones polynomial of the rational link encoded by
$[0,a_1,a_2,\ldots,a_n]$, evaluated at $t=-\rho$, is a $(4\delta)$th root of unity.
\end{theorem}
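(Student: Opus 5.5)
The plan is to evaluate the matrix product of Corollary~\ref{cor:lrk} at $t=-\rho$, where $-t=\rho$. We must have $\rho\neq 1$: for $\rho=1$ we get $t=-1$, i.e.\ $d=0$, and Theorem~\ref{thm:t=-1} shows the value is then a phase times $q$. So we assume $\rho\ne1$, hence $d\neq 0$, and Corollary~\ref{cor:lrk} applies.

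\textbf{Step 1: the matrices for $k\ge 2$.} Since $\delta\mid a_k$ for $k\ge2$, we get $(-t)^{(-1)^{k-1}a_k}=\rho^{\pm a_k}=1$. Each such matrix therefore collapses to the permutation matrix $\left[\begin{smallmatrix}0&1\\1&0\end{smallmatrix}\right]$.

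\textbf{Step 2: the matrix for $k=1$.} Since $|a_1|\equiv 1\pmod\delta$, we have $(-t)^{a_1}=\rho^{\sigma}$ with $\sigma=\sign(a_1)$. Hence
\[
M_1\begin{bmatrix}1\\ d\end{bmatrix}=\begin{bmatrix}\frac{1-\rho^{\sigma}}{d}+\rho^{\sigma}d\\[2pt] 1\end{bmatrix}.
\]
Applying $n-1$ swaps and then $[1\ 0]$ picks out one of the two entries. If $n$ is even we get $\rho_n=1$. If $n$ is odd we get the top entry, which we simplify using $d^2=t+2+t^{-1}=(1+t)^2/t$. With $\zeta=\rho^\sigma$ (which is again a nontrivial $\delta$th root of unity), this gives $d^2=-(1-\zeta)^2/\zeta$, so that $(1-\zeta)/d=-\zeta d/(1-\zeta)$. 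Therefore
\[
\rho_n=\zeta d\Bigl(1-\frac1{1-\zeta}\Bigr)=-\frac{\zeta^2 d}{1-\zeta}.
\]
Here the two entries of $M_1$ are the images of $(-t)^{\sigma}$, so when $a_1<0$ the same computation runs with $\zeta=\rho^{-1}$ in place of $\rho$. Finally $\bigl(d/(1-\zeta)\bigr)^2=-1/\zeta$ is a root of unity, so $\rho_n$ is a root of unity in either parity case.

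\textbf{Step 3: assemble the Jones polynomial.} By Corollary~\ref{cor:lrk} and \eqref{eq51}, $V_L(-\rho)$ equals $\rho_n$ times $(-A)^{3\sum(-1)^{k-1}a_k}(-A^{-3})^{w(D)}$, which is $\pm$ a power of $A$. Now $A^{-4}=t=-\rho$, so $t^\delta=\pm1$ and $t^{2\delta}=1$. Together with the factor $d/(1-\zeta)$ (whose square is $-1/\zeta$, a power of $t$ up to sign), this shows that $V_L(-\rho)$ is a root of unity expressed through powers of $t^{1/2}$, signs, and $\delta$th roots of unity.

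\textbf{Main obstacle: the exact order $4\delta$.} Naively $A$ is only an $8\delta$th root of unity, so the bound on the order needs more care. Here I would use the known fact that $V_L(t)\in t^{1/2}\mathbb Z[t^{\pm1}]$ or $\mathbb Z[t^{\pm1}]$, according to the number of components. Then the value lies in $\langle t^{1/2},-1,\rho\rangle$. Since $(t^{1/2})^{4\delta}=t^{2\delta}=1$, and both $-1$ and $\rho$ have order dividing $4\delta$ (indeed $2\delta$), the value is a $4\delta$th root of unity. The delicate point is checking that the odd-$n$ factor $d/(1-\zeta)$ is consistent with a fixed choice of $t^{1/2}$, i.e.\ that $d/(1-\zeta)=\pm (t^{1/2})^{-1}$ up to the same branch of $A$. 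I would verify this by writing $d=-A^{-2}(1+A^4)=-A^{-2}(1-\zeta^{-1})$ explicitly, which gives $d/(1-\zeta)=A^{-2}\zeta^{-1}$ directly, and then track the powers of $A^2=t^{-1/2}$ throughout.
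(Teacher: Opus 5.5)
Your proof is correct. It follows the paper's strategy: specialize a matrix product at $t=-\rho$, observe that every factor with $k\ge 2$ collapses to the swap matrix, and compute the $k=1$ factor explicitly. The difference is that you work through Corollary~\ref{cor:lrk}, while the paper uses Corollary~\ref{cor:Kbfinal}.

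In the paper's version $\gamma_k=0$ for $k\ge2$ and $\gamma_1=A^{2\varepsilon_1}$. So $\kappa_n$ is either $1$ or $\gamma_1+d=-A^{-2\varepsilon_1}$, visibly a monomial. This avoids your manipulation with $d^2=(1+t)^2/t$ and the identification of $d/(1-\zeta)$. In exchange, your version has the simpler prefactor $(-A)^{3\sum_k(-1)^{k-1}a_k}$ in place of $\omega_n$.

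There is a small slip: for $a_1<0$ one has $1+A^4=1-\rho^{-1}=1-\zeta$, so $d/(1-\zeta)=-A^{-2}$ rather than $A^{-2}\zeta^{-1}$. This is still a monomial, so nothing breaks.

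Your concern about the order $4\delta$ is well founded. The paper closes by asserting $A^{4\delta}=1$, but $A^{-4\delta}=(-\rho)^{\delta}=(-1)^{\delta}$, so this fails for odd $\delta$. What is actually needed is that the total exponent of $A$ is even.

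Your step ``hence the value lies in $\langle t^{1/2},-1,\rho\rangle$'' does not follow from $V_L\in\mathbb{Z}[t^{\pm1/2}]$ alone. That fact places the value in the ring $\mathbb{Z}[t^{1/2}]$, not in the group. It is easily made rigorous, though:
\begin{itemize}
\item $\rho_n$ and $\zeta$ involve only $A^{\pm2}$.
\item The exponent of $A$ in $(-A)^{3\sum_k(-1)^{k-1}a_k}(-A^{-3})^{w(D)}$ is $3\bigl(\sum_k(-1)^{k-1}a_k-w(D)\bigr)$. This is even, since each of the $\sum_k|a_k|$ crossings contributes $\pm1$ to $w(D)$.
\end{itemize}
Equivalently, your whole computation uses only $A^{-4}=-\rho$, so it is equally valid with $-A$ in place of $A$. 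Since $V_L$ is even in $A$ and the value is nonzero, the exponent must be even. Hence the value lies in $\langle -1,A^2\rangle$, and $(A^2)^{4\delta}=t^{-2\delta}=1$.
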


\begin{proof}
If $\delta\mid m$, then for any $\delta$th root of unity $\rho$ we have
\[
[m]_q=\frac{q^m-1}{q-1}=0
\quad\text{at } q=\rho,
\]
and likewise at $q=\rho^{-1}$.

Now substitute $(t=)A^{-4}=-\rho$.
Since $\delta\mid a_k$ for $k\ge2$, we have
\[
[|a_k|]_{-A^{\pm 4}}=0
\quad\text{for } k\ge2.
\]
On the other hand, $\delta\mid (|a_1|-1)$ implies
\[
[|a_1|]_{-A^{\pm4}}=1.
\]

Under this substitution, in the matrix product of
Corollary~\ref{cor:Kbfinal} we therefore obtain
\[
\gamma_k=0 \quad\text{for } k\ge2,
\qquad
\gamma_1=A^2.
\]
Hence
\[
\begin{bmatrix}
\kappa_n\\
\kappa_{n-1}
\end{bmatrix}
=
\begin{bmatrix}
0&1\\
1&0
\end{bmatrix}^{n-1}
\begin{bmatrix}
A^2&1\\
1&0
\end{bmatrix}
\begin{bmatrix}
1\\
d
\end{bmatrix}.
\]

Since
\[
\begin{bmatrix}
A^2&1\\
1&0
\end{bmatrix}
\begin{bmatrix}
1\\
d
\end{bmatrix}
=
\begin{bmatrix}
A^2+d\\
1
\end{bmatrix}
=
\begin{bmatrix}
-A^{-2}\\
1
\end{bmatrix}
\]
(using $d=-A^2-A^{-2}$),
we obtain
\[
\begin{bmatrix}
\kappa_n\\
\kappa_{n-1}
\end{bmatrix}
=
\begin{bmatrix}
0&1\\
1&0
\end{bmatrix}^{n-1}
\begin{bmatrix}
-A^{-2}\\
1
\end{bmatrix}.
\]

A straightforward computation now gives
\[
\kappa_n
=
\begin{cases}
-A^{-2}, & \text{if $n$ is odd},\\
1, & \text{if $n$ is even}.
\end{cases}
\]
Equivalently,
\[
\kappa_n=(-1)^n A^{(-1)^n-1}.
\]

The remaining factors relating $\kappa_n$ to $V_L(t)$,
namely $(-A^{-3})^{w(D)}$ and $\omega_n$,
are products of powers of $A$.
Therefore $V_L(-\rho)$ is a power of $\pm A$.
Since $t=A^{-4}=-\rho$ and $\rho^\delta=1$,
it follows that $A^{4\delta}=1$.
Hence $V_L(-\rho)$ is a $(4\delta)$th root of unity.
\end{proof}

\section{A combinatorial formula for the Jones polynomial of a rational link}
\label{sec:LR}

In this section we derive a variant of the Jones polynomial formula of
Lawrence and Rosenstein~\cite[Theorem~4.2]{La-Ro}
and show that it admits a natural combinatorial reformulation.

Following the notation of Lawrence and Rosenstein, we introduce
\begin{equation}
u=(-t^{-1})^{1/2}= iA^2,
\end{equation}
and define
\begin{equation}
m_k = (-1)^{k-1} a_k.
\end{equation}
By~\eqref{eq:tsign}, the sign of $m_k$,
\[
\sign(m_k)=(-1)^{k-1}\sign(a_k),
\]
coincides with the twisting sign of the crossings in the twist box
associated with $a_k$.

\begin{definition}
Let $D$ be a rational link diagram encoded by
\[
[0,a_1,a_2,\ldots,a_n]
=
[0,m_1,-m_2,\ldots,(-1)^{n-1}m_n].
\]
The \emph{Lawrence–Rosenstein polynomial} of $D$ is defined by
\[
R_D(t)
=
u^{\sum_{k=1}^n m_k}
\cdot
\frac{d^n\, V_{L(D)}(t)}
{(-A)^{- 3w(D)+3\sum_{k=1}^n m_k }}.
\]
\end{definition}

\medskip

The polynomial $R_D(t)$ is a Laurent polynomial in $\sqrt{t}$,
just like the Jones polynomial, as shown by the following result.

\begin{proposition}
\label{prop:lr}
Let $D_n$ be a rational link diagram encoded by
\[
[0,a_1,\ldots,a_n]
=
[0,m_1,-m_2,\ldots,(-1)^{n-1}m_n].
\]
Then
\[
R_D(t)
=
\begin{bmatrix}1 & 0\end{bmatrix}
\left[
\begin{matrix}
u^{m_n}-u^{-m_n} & u^{-m_n} d \\
u^{m_n} d & 0
\end{matrix}
\right]
\cdots
\left[
\begin{matrix}
u^{m_1}-u^{-m_1} & u^{-m_1} d \\
u^{m_1} d & 0
\end{matrix}
\right]
\begin{bmatrix}1\\ d\end{bmatrix}.
\]
\end{proposition}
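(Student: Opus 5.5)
The plan is to derive the formula directly from Corollary~\ref{cor:lrk}, together with the relation~\eqref{eq51} of Theorem~\ref{T41} between the Jones polynomial and the Kauffman bracket. The whole argument is bookkeeping of scalar factors, with one small observation about the matrices.

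First, I will strip the prefactors off $R_D(t)$. By~\eqref{eq51}, $V_{L(D)}(t)=(-A^{-3})^{w(D)}K_n$. Corollary~\ref{cor:lrk} gives $K_n=(-A)^{3\sum_k m_k}\rho_n$, since $\sum_k(-1)^{k-1}a_k=\sum_k m_k$. Substituting these into the definition of $R_D(t)$ leaves the factor
\[
(-A^{-3})^{w(D)}/(-A)^{-3w(D)}=(-1)^{w(D)}A^{-3w(D)}/\bigl((-1)^{w(D)}A^{-3w(D)}\bigr)=1 .
\]
The factor $(-A)^{3\sum m_k}$ also cancels against the denominator. Hence
\[
R_D(t)=u^{\sum_{k=1}^n m_k}\,d^n\,\rho_n .
\]

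Second, I will rewrite the matrices of Corollary~\ref{cor:lrk} in terms of $u$. The entries there involve $(-t)^{(-1)^{k-1}a_k}=(-t)^{m_k}$. Since $u=iA^2$, we have $u^2=-A^4=-t^{-1}$, so $(-t)^{m_k}=u^{-2m_k}$. The $k$-th factor is therefore
\[
N_k=\begin{bmatrix}\frac{1-u^{-2m_k}}{d} & u^{-2m_k}\\ 1&0\end{bmatrix}.
\]

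Third, I will distribute the scalar $u^{\sum m_k}d^n=\prod_{k=1}^n (d\,u^{m_k})$ over the product, giving one factor to each matrix. A direct check shows that $d\,u^{m_k}N_k$ is exactly the $k$-th matrix in the statement:
\begin{itemize}
\item the $(1,1)$ entry becomes $u^{m_k}-u^{-m_k}$;
\item the $(1,2)$ entry becomes $u^{-m_k}d$;
\item the $(2,1)$ entry becomes $u^{m_k}d$;
\item the $(2,2)$ entry stays $0$.
\end{itemize}
Since scalars commute with matrix products, the row vector $[1\;0]$ and the column vector $[1;d]$ are unchanged, and the proposition follows.

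There is no real obstacle. The one point needing care is consistency of square roots: one must check that the choice $u=iA^2$ indeed satisfies $u^2=-t^{-1}$ with $t=A^{-4}$, so that $(-t)^{m_k}=u^{-2m_k}$ holds for negative $m_k$ as well. The resulting entries are Laurent polynomials in $u$, and hence in $\sqrt{t}$ up to powers of $i$, which is consistent with the remark preceding the proposition.
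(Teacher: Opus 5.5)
Your route is the paper's: strip the prefactors using \eqref{eq51} and Corollary~\ref{cor:lrk} to get $R_D(t)=u^{\sum m_k}d^n\rho_n$, rewrite $(-t)^{m_k}=u^{-2m_k}$, and absorb one factor $d\,u^{m_k}$ into the $k$th matrix. Your entry-by-entry check is correct.

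What you skipped is the hypothesis of Corollary~\ref{cor:lrk}: it assumes $d\neq 0$, since its matrices contain $1/d$ and $\rho_n$ is undefined otherwise. The proposition carries no such hypothesis. The excluded case $d=0$, i.e.\ $A^4=-1$ or $t=-1$, is exactly the one the paper handles separately in Theorem~\ref{thm:t=-1}. As written, your derivation proves the identity only for those $A$ with $d\neq 0$.

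The paper closes this in one sentence, and you should add it. Both sides are Laurent polynomials in $A$:
\begin{itemize}
\item the left side because $V_{L(D)}$ is one and the remaining prefactors are Laurent monomials or powers of $d$;
\item the right side because, after absorbing $d$ into each factor, no denominators remain.
\end{itemize}
The two sides agree for every $A$ except the four roots of $A^4=-1$. Hence they are equal as Laurent polynomials, and the identity also holds at $d=0$.

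Equivalently, run your computation formally in $\mathbb{Q}(i)(A)$. There $d=-A^2-A^{-2}$ is a nonzero element, so Corollary~\ref{cor:lrk} applies. Since both sides lie in the Laurent polynomial ring, you may then specialize $A$ freely.
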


\begin{proof}
Define
\[
\widetilde{R}_D(t)
=
u^{-\sum_{k=1}^n m_k} R_D(t).
\]
We first show that
$$
\widetilde{R}_D(t)
=
\begin{bmatrix}1 & 0\end{bmatrix}
\left[
\begin{matrix}
1-(-t)^{m_n} & (-t)^{m_n} d \\
d & 0
\end{matrix}
\right]
\cdots
\left[
\begin{matrix}
1-(-t)^{m_1} & (-t)^{m_1} d \\
d & 0
\end{matrix}
\right]
\begin{bmatrix}1\\ d\end{bmatrix}.
$$
When $d\neq0$, this follows directly from
Corollary~\ref{cor:lrk}, since
\(
\widetilde{R}_D(t)=d^n \rho_n.
\)
The general case then follows due to the fact that both sides of the above are Laurent polynomials of $A$. 

Now multiply both sides by $u^{\sum_{k=1}^n m_k}$.
Since
\[
(-t)^{m_k}=u^{-2m_k},
\]
each matrix factor
\[
\begin{bmatrix}
1-u^{-2m_k} & u^{-2m_k} d\\
d & 0
\end{bmatrix}
\]
becomes
\[
u^{m_k}
\begin{bmatrix}
1-u^{-2m_k} & u^{-2m_k} d\\
d & 0
\end{bmatrix}
=
\begin{bmatrix}
u^{m_k}-u^{-m_k} & u^{-m_k} d\\
u^{m_k} d & 0
\end{bmatrix}.
\]
Substituting these into the product yields the stated formula.
\end{proof}

Introducing the notation
\[
L=
\begin{bmatrix}
1 & 0\\
d & 0
\end{bmatrix}
\]
and defining the $\star$–operation by
\[
\begin{bmatrix}
a_{11} & a_{12}\\
a_{21} & a_{22}
\end{bmatrix}^{\star}
=
\begin{bmatrix}
a_{11} & -a_{21}\\
-a_{12} & a_{22}
\end{bmatrix},
\]
the formula of Proposition~\ref{prop:lr} may be rewritten as
\begin{equation}
\label{eq:lr}
R_D(t)
=
\begin{bmatrix}1 & 0\end{bmatrix}
(u^{m_n}L-u^{-m_n}L^\star)
\cdots
(u^{m_1}L-u^{-m_1}L^\star)
L
\begin{bmatrix}1\\0\end{bmatrix}.
\end{equation}

\begin{lemma}
\label{lem:lprod}
Let
\[
E=
\begin{bmatrix}
1 & 0\\
0 & 0
\end{bmatrix}.
\]
Then the matrices $E$, $L$ and $L^\star$ satisfy
\[
E^2=E, \quad L^2=L, \quad (L^\star)^2=L^\star,\quad L^\star L=(1-d^2)\,E
\]
and 
\[
LE=L, \quad EL=E, \quad
L^\star E=E, \quad EL^\star=L^\star.
\]
\end{lemma}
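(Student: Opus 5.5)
The plan is to verify all eight identities by explicit $2\times 2$ matrix multiplication. Since this is entirely mechanical, I would organize it through a rank-one factorization so that each product reduces to a scalar inner product.

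First I would write out the matrices explicitly. By definition $L=\begin{bmatrix}1&0\\ d&0\end{bmatrix}$, and applying the $\star$-operation (swap the off-diagonal entries and negate them) gives $L^\star=\begin{bmatrix}1&-d\\ 0&0\end{bmatrix}$. Write $e_1=\begin{bmatrix}1\\0\end{bmatrix}$ and $c=\begin{bmatrix}1\\ d\end{bmatrix}$, and let $r=\begin{bmatrix}1 & -d\end{bmatrix}$. Then
\[
E=e_1e_1^{T},\qquad L=c\,e_1^{T},\qquad L^\star=e_1\,r.
\]

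For any rank-one matrices, $(ab^{T})(a'b'^{T})=(b^{T}a')\,ab'^{T}$, so every product in the lemma is a scalar multiple of a rank-one matrix. The scalars needed are
\[
e_1^{T}e_1=1,\qquad e_1^{T}c=1,\qquad r\,e_1=1,\qquad r\,c=1-d^2.
\]

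Each identity then follows in one line:
\begin{itemize}
\item $E^2=(e_1^Te_1)E=E$.
\item $L^2=(e_1^Tc)\,c\,e_1^T=L$.
\item $(L^\star)^2=(r e_1)\,e_1 r=L^\star$.
\item $L^\star L=e_1(rc)e_1^T=(1-d^2)E$.
\item $LE=c(e_1^Te_1)e_1^T=L$.
\item $EL=e_1(e_1^Tc)e_1^T=E$.
\item $L^\star E=e_1(re_1)e_1^T=E$.
\item $EL^\star=e_1(e_1^Te_1)r=L^\star$.
\end{itemize}

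There is no real obstacle here. The only point needing care is computing $L^\star$ correctly from the $\star$-operation, in particular the sign of the $-d$ entry. That sign is exactly what produces the factor $1-d^2$ in $L^\star L$, rather than $1+d^2$. As a sanity check, I would confirm $L^\star L$ by direct entrywise multiplication:
\[
\begin{bmatrix}1&-d\\0&0\end{bmatrix}\begin{bmatrix}1&0\\ d&0\end{bmatrix}=\begin{bmatrix}1-d^2&0\\0&0\end{bmatrix}.
\]
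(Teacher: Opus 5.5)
Your proof is correct and takes the same approach as the paper, which only remarks that the verification is straightforward (direct $2\times 2$ multiplication). Your rank-one factorizations $E=e_1e_1^{T}$, $L=c\,e_1^{T}$, $L^\star=e_1 r$ are simply a tidy way to organize that computation, and you computed $L^\star=\begin{bmatrix}1&-d\\0&0\end{bmatrix}$ correctly; its $-d$ entry is what gives $1-d^2$ in $L^\star L$.
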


\noindent
The verification is straightforward.

The polynomial $R_D(t)$ admits a combinatorial expansion
that is most naturally expressed in terms of interval
decompositions of subsets of $\{1,\ldots,n\}$.

\begin{definition}
Let $S$ be a set of positive integers.
An \emph{interval of $S$} is a maximal subset of $S$
of the form
\[
\{k,k+1,\ldots,\ell\}
\]
for some integers $k\le \ell$.
Equivalently, it is a maximal block of consecutive integers contained in $S$.
We denote the set of intervals of $S$ by $I(S)$.
\end{definition}

For example, if
\[
S=\{2,3,5,6,7,9\},
\]
then
\[
I(S)=\{[2,3],\, [5,7],\, [9,9]\}.
\]

\begin{theorem}
\label{thm:lrc}
We have
\[
R_D(t)
=
\sum_{S\subseteq \{1,\ldots,n\}}
\left(
\prod_{k\notin S} u^{m_k}
\right)
\left(
\prod_{k\in S} (-u^{-m_k})
\right)
(u^2+u^{-2}-1)^{|I(S)|}.
\]
\end{theorem}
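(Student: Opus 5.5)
The plan is to expand the matrix product in \eqref{eq:lr} multiplicatively and simplify each resulting word using the relations of Lemma~\ref{lem:lprod}.

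\textbf{Expansion.} Each factor $u^{m_k}L-u^{-m_k}L^\star$ contributes either $u^{m_k}L$ or $-u^{-m_k}L^\star$. Record this choice by a subset $S\subseteq\{1,\ldots,n\}$, the set of indices $k$ for which $L^\star$ is chosen. Then
\[
R_D(t)=\sum_{S}\Bigl(\prod_{k\notin S}u^{m_k}\Bigr)\Bigl(\prod_{k\in S}(-u^{-m_k})\Bigr)\,
\begin{bmatrix}1&0\end{bmatrix}W_S\,L\begin{bmatrix}1\\0\end{bmatrix},
\]
where $W_S=W_n\cdots W_1$, with $W_k=L^\star$ if $k\in S$ and $W_k=L$ otherwise. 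It therefore suffices to prove
\[
\begin{bmatrix}1&0\end{bmatrix}W_S L\begin{bmatrix}1\\0\end{bmatrix}=(1-d^2)^{|I(S)|}
\]
and to identify the constant. Since $u^2=-A^4$, we have
\[
u^2+u^{-2}-1=-A^4-A^{-4}-1=1-(A^2+A^{-2})^2=1-d^2 .
\]

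\textbf{Reducing the word.} Read the word $W_SL$ from right to left as a sequence of letters.
\begin{enumerate}
\item Each interval $[k,\ell]\in I(S)$ is a maximal run of consecutive $L^\star$'s. By $(L^\star)^2=L^\star$, this run collapses to a single $L^\star$.
\item This $L^\star$ is immediately followed on the right by an $L$. If $k>1$, that $L$ is $W_{k-1}$, since $k-1\notin S$. If $k=1$, it is the extra terminal factor $L$. Hence every interval produces a subword $L^\star L=(1-d^2)E$.
\item Do this for all intervals simultaneously; the runs are disjoint because the intervals are maximal. The result is $(1-d^2)^{|I(S)|}$ times a nonempty word in the letters $L$ and $E$ only. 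Consecutive letters $L$ have not been reused, because each $L$ directly to the right of a run is consumed exactly once.
\end{enumerate}

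\textbf{Evaluating the remaining word.} A nonempty word in $L$ and $E$ equals its leftmost letter. Indeed, $L^2=L$, $E^2=E$, $LE=L$ and $EL=E$ show that multiplying any letter on the right by $L$ or $E$ leaves it unchanged. So the word is either $L$ or $E$. In both cases the first column has top entry $1$, so $\begin{bmatrix}1&0\end{bmatrix}L\begin{bmatrix}1\\0\end{bmatrix}=\begin{bmatrix}1&0\end{bmatrix}E\begin{bmatrix}1\\0\end{bmatrix}=1$. This gives the stated formula.

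\textbf{Main obstacle.} The only delicate point is the bookkeeping in the reduction step. One must check that each maximal $L^\star$-block is always followed by an $L$, including the block containing position $1$, which uses the extra factor $L$ in \eqref{eq:lr}. One must also check that no factor $L$ is shared between two blocks. Both follow from the maximality of intervals.
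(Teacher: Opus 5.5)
Your proposal is correct and follows the paper's proof. You expand the product in \eqref{eq:lr} over subsets $S$, use Lemma~\ref{lem:lprod} to collapse each maximal $L^\star$-block against the adjacent $L$ into $(1-d^2)E$, and identify $1-d^2=u^2+u^{-2}-1$, with your observation that any word in $L$ and $E$ equals its leftmost letter (so the final evaluation is $1$ whether or not $n\in S$) filling in bookkeeping the paper leaves implicit.
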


\begin{proof}
Expanding the matrix product in~\eqref{eq:lr}
produces $2^n$ terms.
For each subset $S\subseteq\{1,\ldots,n\}$,
select the factor $-u^{-m_k}L^\star$ whenever $k\in S$,
and $u^{m_k}L$ whenever $k\notin S$.

For example, if $n=6$ and $S=\{2,4,5,6\}$ (so $|I(S)|=2$),
the corresponding contribution is
\[
\begin{bmatrix}1 & 0\end{bmatrix}
u^{m_1}L(-u^{-m_2}L^\star)
u^{m_3}L
(-u^{-m_4}L^\star)
(-u^{-m_5}L^\star)
(-u^{-m_6}L^\star)
L
\begin{bmatrix}1\\0\end{bmatrix}.
\]
Factoring out the scalar terms gives
\[
u^{m_1-m_2+m_3-m_4-m_5-m_6}
(-1)^{|S|}
\begin{bmatrix}1 & 0\end{bmatrix}
L L^\star L (L^\star)^3 L
\begin{bmatrix}1\\0\end{bmatrix}.
\]
By Lemma~\ref{lem:lprod}, 
$$
\begin{bmatrix}1 & 0\end{bmatrix}L L^\star L (L^\star)^3 L\begin{bmatrix}1\\0\end{bmatrix}=
(1-d^2)^2 \begin{bmatrix}1 & 0\end{bmatrix}L\begin{bmatrix}1\\0\end{bmatrix}=(1-d^2)^2.
$$

In general, each maximal block of consecutive indices in $S$
produces one factor $L^\star L$.
Hence for each $S$ the corresponding matrix product reduces to
\[
(1-d^2)^{|I(S)|}.
\]
Therefore the contribution of $S$ equals
\[
\left(
\prod_{k\notin S} u^{m_k}
\right)
\left(
\prod_{k\in S} (-u^{-m_k})
\right)
(1-d^2)^{|I(S)|}.
\]

Finally, since
\[
1-d^2
=
1-(A^{-2}+A^2)^2
=
1-(iu^{-1}-iu)^2
=
u^2+u^{-2}-1,
\]
the stated formula follows.
\end{proof}

It is worth noting that
\[
u^2+u^{-2}-1=-\frac{1+t+t^2}{t},
\]
so Theorem~\ref{thm:lrc} may be restated for
\(
\widetilde{R}_D(t)=u^{-\sum_{k=1}^n m_k}R_D(t)
\)
as
\[
\widetilde{R}_D(t)
=
\sum_{S\subseteq \{1,\ldots,n\}}
(-1)^{|S|+|I(S)|}
\prod_{k\in S} (-t)^{m_k}
\left(\frac{1+t+t^2}{t}\right)^{|I(S)|}.
\]

\begin{example}{\em 
For $A=\exp(\pi i/3)$, $t=A^{-4}=\exp(-4\pi i/3)=\exp(2\pi i/3)$ and $1+t+t^2=0$.
Hence the only nonzero contribution in
Theorem~\ref{thm:lrc} comes from $S=\emptyset$.
Therefore
\[
\widetilde{R}_D(\exp(2\pi i/3))=1.
\]
Since $(-A)^3=-\exp(\pi i)=1$ and
$d=-A^2-A^{-2}=1$,
it follows that
\[
V_D(\exp(2\pi i/3))=1.
\]
}
\end{example}

\section{The Tutte polynomial of the Tait graph of an alternating rational link diagram}
\label{sec:Taitalt}

All results of this paper are directly applicable in the special case
when the rational link is represented by an alternating diagram. For such diagrams there is,
however, another way to compute the Kauffman bracket: one may compute
the Tutte polynomial of the Tait graph and then apply
Corollary~\ref{cor:alternating}.  In this section we outline this
approach and obtain a generalization of~\cite[Theorem~3.4]{Q}.

It should be noted that to apply Corollary~\ref{cor:alternating} we
only need the evaluation of $T(G;X,Y)$ at $Y=X^{-1}$. Under this
specialization, many of the formulas below simplify further.

Without loss of generality we may assume that the rational
link diagram in standard form is represented by the continued fraction
$[0,a_1,a_2,\ldots,a_n]$, where
$a_1,a_2,\ldots,a_n$ are all positive. In this case all edges of
the Tait graph have positive sign. The ordinary Tutte polynomial may
then be obtained from the signed Tutte polynomial by setting
\[
x_+=1, \qquad y_+=1, \qquad X_+=X, \qquad Y_+=Y.
\]
Consequently, equations~\eqref{eq:ycondsu} and~\eqref{eq:xcondsv}
become
\begin{equation}
\label{eq:condsuvalt}
u=X-1
\qquad\text{and}\qquad
v=Y-1.
\end{equation}

We may still apply Theorems~\ref{Ttensor} and~\ref{structure} to obtain
the Tutte polynomial of the Tait graph from the colored Tutte
polynomial $T_n$ of the core graph $G_n$ by using suitable
specializations of the substitution rules
\eqref{eq:Pm-e}--\eqref{eq:Pm*TL}.

\medskip

For an even index $2j$ we have $\widehat{Q}_{2j}=\widehat{P}_{a_{2j}}$
and the following substitution rules:
\begin{eqnarray}
\label{eq:Xe+}
X_{2j} &\mapsto& T(\widehat{Q}_{2j}-e)\mapsto X^{a_{2j}},\\
\label{eq:Ye+}
Y_{2j} &\mapsto& T(\widehat{Q}_{2j}/e)\mapsto [a_{2j}]_X + Y - 1,\\
\label{eq:xe+}
x_{2j} &\mapsto& T_L(\widehat{Q}_{2j},e)\mapsto 1,\\
\label{eq:ye+}
y_{2j} &\mapsto& T_C(\widehat{Q}_{2j},e)\mapsto [a_{2j}]_X .
\end{eqnarray}

Similarly, for an odd index $2j+1$ we have
$\widehat{Q}_{2j+1}=\widehat{P}^*_{a_{2j+1}}$ and
\begin{eqnarray}
\label{eq:Xo+}
X_{2j+1} &\mapsto& T(\widehat{Q}_{2j+1}-e)
      \mapsto [a_{2j+1}]_Y + X - 1,\\
\label{eq:Yo+}
Y_{2j+1} &\mapsto& T(\widehat{Q}_{2j+1}/e)\mapsto Y^{a_{2j+1}},\\
\label{eq:xo+}
x_{2j+1} &\mapsto& T_L(\widehat{Q}_{2j+1},e)\mapsto [a_{2j+1}]_Y,\\
\label{eq:yo+}
y_{2j+1} &\mapsto& T_C(\widehat{Q}_{2j+1},e)\mapsto 1 .
\end{eqnarray}

These substitutions determine the parameters $u_k$ and $v_k$:
\begin{equation}
\label{u_nalt}
u_k \mapsto
\begin{cases}
X^{a_{k-1}}, & \text{if $k$ is odd},\\
Y^{a_{k-1}}, & \text{if $k$ is even},
\end{cases}
\end{equation}

\begin{equation}
\label{v_nalt}
v_k \mapsto
\begin{cases}
[a_k]_Y, & \text{if $k$ is odd},\\
[a_k]_X, & \text{if $k$ is even}.
\end{cases}
\end{equation}

In particular, we set
\[
u_1=y_1\mapsto 1=X^0,
\qquad
v_1=x_1\mapsto [a_1]_Y.
\]

Substituting these rules into~\eqref{eq:Tmatrixu} yields the following
formula.

\begin{theorem}
\label{thm:Taitalt}
Let $G$ be the Tait graph of a standard rational link diagram encoded by
the continued fraction
\[
[0,a_1,a_2,\ldots,a_n],
\]
where $a_1,a_2,\ldots,a_n$ are positive.
Then the Tutte polynomial $T(G;X,Y)$ is given by
\[
T(G;X,Y)=
\begin{bmatrix}1 & 0\end{bmatrix}
A_n A_{n-1}\cdots A_1
\begin{bmatrix}
1\\
X-1
\end{bmatrix}.
\]

Here $a_0=0$, and
\[
A_k=
\begin{cases}
\begin{bmatrix}
[a_k]_Y & X^{a_{k-1}}\\
1 & 0
\end{bmatrix},
& \text{if $k$ is odd}, \\[16pt]

\begin{bmatrix}
[a_k]_X & Y^{a_{k-1}}\\
1 & 0
\end{bmatrix},
& \text{if $k$ is even}.
\end{cases}
\]
\end{theorem}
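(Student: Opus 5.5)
The plan is to specialize the uniform matrix identity \eqref{eq:Tmatrixu} for the core graph and push it through the tensor product. By Theorem~\ref{structure}, $G=G_n\otimes_1\widehat{Q}_1\cdots\otimes_n\widehat{Q}_n$, where every non-distinguished edge has the positive color. To keep the substitutions of Theorem~\ref{Ttensor} independent for different twist boxes, I regard the $n$ edges of $G_n$ as carrying $n$ distinct colors $1,\ldots,n$. Then Theorem~\ref{Ttensor}, applied once per color, says that $T(G)$ is obtained from $T_n=T(G_n)$ by substituting, for each $k$, the four pointed invariants of $\widehat{Q}_k$ for $X_k,x_k,Y_k,y_k$.

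These invariants are given by \eqref{eq:Pm-e}--\eqref{eq:Pm*TL} with $x_+=y_+=1$, $X_+=X$, $Y_+=Y$, and with $u=X-1$, $v=Y-1$ by \eqref{eq:condsuvalt}. This specialization is legitimate because the ordinary Tutte polynomial satisfies \eqref{eq:ycondu} and \eqref{eq:xcondv} identically. The result is the list \eqref{eq:Xe+}--\eqref{eq:yo+}. For example, for even $k$ we get $y_k\mapsto [a_k]_X$ from \eqref{eq:PmTC}, since $x^{m-1}y=1$ and $X_\lambda/x_\lambda=X$.

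Next I substitute into \eqref{u_n} and \eqref{v_n}. For odd $k$, $u_k=X_{k-1}y_k\mapsto X^{a_{k-1}}\cdot 1$ and $v_k=x_k\mapsto[a_k]_Y$. For even $k$, $u_k=Y_{k-1}x_k\mapsto Y^{a_{k-1}}$ and $v_k=y_k\mapsto[a_k]_X$. For $k=1$, \eqref{eq:uv0} gives $u_1=y_1\mapsto 1=X^{a_0}$ with $a_0=0$, and $v_1=x_1\mapsto[a_1]_Y$. The initial vector $(1,u)^T$ becomes $(1,X-1)^T$. Under these rules each matrix $\begin{bmatrix}v_k&u_k\\1&0\end{bmatrix}$ becomes exactly $A_k$, and taking the first row via $\begin{bmatrix}1&0\end{bmatrix}$ extracts $T_n\mapsto T(G;X,Y)$.

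The main obstacle is justifying the use of \eqref{eq:Tmatrixu} in this setting. That identity was derived in the fraction field under the relation $X_1=x_1+y_1u$, and Theorem~\ref{Ttensor} in the colored setting requires the Bollob\'as--Riordan conditions. My first choice is to sidestep both issues: apply the substitution to the polynomial recurrence \eqref{eq:Tmatrix}, which holds in $\mathbb Z[\Lambda]$ modulo $I_0$, and verify only the first step by hand. That step is $X_1\mapsto [a_1]_Y+X-1=[a_1]_Y\cdot 1+1\cdot(X-1)$, which is precisely $A_1(1,X-1)^T$'s top entry, while the bottom entry is $1$. This bypasses any division. The tensor product step is still valid because the specialized pointed invariants satisfy \eqref{TLC1} and \eqref{TLC2} with the common $u=X-1$, $v=Y-1$, so the image lies in a quotient satisfying the hypotheses of Theorem~\ref{T_BR}.
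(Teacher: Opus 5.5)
Your proposal is correct and follows the paper's own argument: specialize the pointed invariants of $\widehat{P}_m$ and $\widehat{P}_m^*$ via $x_+=y_+=1$, $X_+=X$, $Y_+=Y$, $u=X-1$, $v=Y-1$, read off $u_k,v_k$, and substitute into the matrix identity \eqref{eq:Tmatrixu}. Starting instead from \eqref{eq:Tmatrix} and checking $X_1\mapsto [a_1]_Y+X-1$ by hand is the same factorization $X_1=x_1+y_1u$ the paper uses to pass from \eqref{eq:Tmatrix} to \eqref{eq:Tmatrixu}, so it adds a bit of rigor (no division needed) without changing the route.
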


\begin{remark}
\emph{Substituting $Y=X^{-1}$ in Theorem~\ref{thm:Taitalt} yields
\[
A_k \mapsto
\begin{bmatrix}
[a_k]_{X^{(-1)^k}} &
X^{(-1)^{k-1}a_{k-1}}\\
1 & 0
\end{bmatrix}
\qquad\text{for all $k$.}
\]}
\end{remark}

It should be noted that even for odd $n$, the formula stated in
Theorem~\ref{thm:Taitalt} looks very different from the one stated
in~\cite[Theorem~3.4]{Q}, which may be rewritten in our notation as
follows.

\begin{theorem}[Qazaqzeh--Yasein--Abu-Qamar]
Let $G$ be the Tait graph of a standard rational link diagram encoded by
the continued fraction
\[
[0,a_1,a_2,\ldots,a_n],
\]
where $a_1,a_2,\ldots,a_n$ are positive and $n=2k+1$ is odd. Then the Tutte polynomial $T(G;X,Y)$ is
given by
\[
T(G;X,Y)=
\sum_{\substack{0\le l\le k\\
1\le i_1<i_2<\cdots<i_l\le k}}
\prod_{j=1}^l [a_{2i_j}]_X
\prod_{j=0}^l \big([c_j]_Y+X-1\big),
\]
where we set $i_0=0$, $i_{l+1}=k+1$, and
\[
c_j=a_{2i_j+1}+a_{2i_j+3}+\cdots+a_{2i_{j+1}-1},
\qquad j=0,1,\ldots,l .
\]
\end{theorem}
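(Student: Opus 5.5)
The plan is to derive the Qazaqzeh--Yasein--Abu-Qamar formula from Theorem~\ref{thm:Taitalt}, or equivalently from the recurrences \eqref{R_T2n+1}--\eqref{R_T2n}, by induction on $k$. Under the substitutions \eqref{eq:Xe+}--\eqref{eq:yo+} these recurrences become
\[
T_{2k+1}=X^{a_{2k}}T_{2k-1}+[a_{2k+1}]_Y\,T_{2k},\qquad
T_{2k}=Y^{a_{2k-1}}T_{2k-2}+[a_{2k}]_X\,T_{2k-1},
\]
with initial values $T_0=1$ and $T_1=[a_1]_Y+X-1$. Write $Q(a_1,\ldots,a_{2k+1})$ for the right-hand side of the Qazaqzeh--Yasein--Abu-Qamar formula. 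The base case $k=0$ is immediate: only $l=0$ occurs and $c_0=a_1$.

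For the inductive step I would first split the sum $Q$ according to the largest chosen index $i_l$. The terms with $i_l=k$ factor as
\[
[a_{2k}]_X\,\bigl([a_{2k+1}]_Y+X-1\bigr)\,Q(a_1,\ldots,a_{2k-1}),
\]
since the last block is $c_l=a_{2k+1}$ and the earlier choices form an arbitrary term of the sum for the prefix. The terms with $i_l<k$ (including $l=0$) are exactly the terms of $Q(a_1,\ldots,a_{2k-1})$ whose last block $c_l$ has been lengthened by $a_{2k+1}$. I will use the $q$-identity
\[
[c+a]_Y=[c]_Y+Y^{c}[a]_Y,
\]
which gives $[c_l+a_{2k+1}]_Y+X-1=([c_l]_Y+X-1)+Y^{c_l}[a_{2k+1}]_Y$. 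The first summand reproduces $Q(a_1,\ldots,a_{2k-1})$. The second summand yields an auxiliary sum $W_{k-1}$, defined like $Q$ but with the last factor $[c_l]_Y+X-1$ replaced by $Y^{c_l}$.

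This leads to the recurrence
\[
Q_k=\bigl(1+[a_{2k}]_X([a_{2k+1}]_Y+X-1)\bigr)Q_{k-1}+[a_{2k+1}]_Y\,W_{k-1}.
\]
By splitting $W$ on its last chosen index in the same way, I get a companion recurrence for $W$:
\[
W_k=Y^{a_{2k+1}}\bigl(W_{k-1}+[a_{2k}]_X\,Q_{k-1}\bigr).
\]

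On the Tait-graph side, I would eliminate the even terms. Substituting the formula for $T_{2k}$ into that for $T_{2k+1}$ and using $X^{a}=1+(X-1)[a]_X$ gives
\[
T_{2k+1}=\bigl(1+[a_{2k}]_X([a_{2k+1}]_Y+X-1)\bigr)T_{2k-1}+[a_{2k+1}]_Y\,Y^{a_{2k-1}}T_{2k-2}.
\]
Comparing with the recurrence for $Q_k$, the natural conjecture to prove simultaneously is
\[
W_{k-1}=Y^{a_{2k-1}}T_{2k-2}.
\]
This is checked from $T_{2k}=Y^{a_{2k-1}}T_{2k-2}+[a_{2k}]_XT_{2k-1}$ together with the $W$-recurrence and the inductive hypothesis $Q_{k-1}=T_{2k-1}$: multiplying the expression for $T_{2k}$ by $Y^{a_{2k+1}}$ reproduces $W_k$ exactly. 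The base cases are $W_0=Y^{a_1}$ and $T_0=1$.

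The main obstacle I expect is bookkeeping in the split of $Q$ by $i_l$. One point is to make sure the lengthening of the last odd block $c_l$ correctly absorbs the factor $X^{a_{2k}}$ coming from a $2\times1$ tile. The other is to identify the auxiliary sum $W$ with $Y^{a_{2k-1}}T_{2k-2}$ rather than with something subtly off by one index. If the two-sequence induction becomes unwieldy, I would fall back on pairing the matrices $A_{2j+1}A_{2j}$ from Theorem~\ref{thm:Taitalt} into a single $2\times2$ step matrix and verifying the recurrence at the matrix level.
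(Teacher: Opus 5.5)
Your argument is correct, but it is not what the paper does: the paper gives no proof of this statement. It quotes it from~\cite{Q} and only observes that it must agree with Theorem~\ref{thm:Taitalt} because the Tutte polynomial of $G$ is uniquely determined, checking this on the example $n=3$, $a_3=1$.

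You instead derive the formula from the substituted core-graph recurrences by a simultaneous induction on $Q_k$ and the auxiliary sum $W_k$. I checked the following steps, and the induction closes as you describe, so the matrix-pairing fallback is not needed:
the split by the largest index $i_l$;
the recurrences $Q_k=(1+[a_{2k}]_X([a_{2k+1}]_Y+X-1))Q_{k-1}+[a_{2k+1}]_YW_{k-1}$ and $W_k=Y^{a_{2k+1}}(W_{k-1}+[a_{2k}]_XQ_{k-1})$;
the elimination of $T_{2k}$ via $X^{a}=1+(X-1)[a]_X$;
the base cases $Q_0=T_1$ and $W_0=Y^{a_1}T_0$.

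The paper's route costs nothing, but it rests on the correctness of~\cite{Q} and explains nothing about why two such different-looking expressions coincide. Yours makes \cite[Theorem~3.4]{Q} a consequence of the paper's structure theorem and tensor product formula alone. It also gives a by-product. Since $W_k$ depends on $a_{2k+1}$ only through the factor $Y^{a_{2k+1}}$, the identity $T_{2k}=Y^{-a_{2k+1}}W_k$ is a formula of the same Qazaqzeh--Yasein--Abu-Qamar type for diagrams with an even number of twist boxes. That is exactly the case the paper stresses is outside the scope of~\cite{Q}.
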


Our formula expresses $T(G;X,Y)$ directly in terms of the partial
denominators $a_j$, without introducing the auxiliary quantities $c_j$,
whose definition depends on the choice of a subset
$\{i_1,\ldots,i_l\}\subseteq\{1,\ldots,k\}$.
Since the Tutte polynomial of a graph is uniquely determined, the two
expressions must agree when $n$ is odd, as demonstrated by the following simple example.

\begin{example}
{\em
Consider the case $n=3$ with $a_3=1$.
By Theorem~\ref{thm:Taitalt} the Tutte polynomial of the Tait graph is
\begin{align*}
T(G;X,Y)
&=
\begin{bmatrix}1 & 0\end{bmatrix}
\begin{bmatrix}
1 & X^{a_2}\\
1 & 0
\end{bmatrix}
\begin{bmatrix}
[a_2]_X & Y^{a_1}\\
1 & 0
\end{bmatrix}
\begin{bmatrix}
[a_1]_Y & 1\\
1 & 0
\end{bmatrix}
\begin{bmatrix}
1\\
X-1
\end{bmatrix}\\[6pt]
&=
\begin{bmatrix}
1 & X^{a_2}
\end{bmatrix}
\begin{bmatrix}
[a_2]_X & Y^{a_1}\\
1 & 0
\end{bmatrix}
\begin{bmatrix}
[a_1]_Y+X-1\\
1
\end{bmatrix}\\[6pt]
&=
\begin{bmatrix}
[a_2]_X+X^{a_2} & Y^{a_1}
\end{bmatrix}
\begin{bmatrix}
[a_1]_Y+X-1\\
1
\end{bmatrix}\\[6pt]
&=
\big([a_2]_X+X^{a_2}\big)\big([a_1]_Y+X-1\big)+Y^{a_1}\\
&=
[a_2+1]_X\big([a_1]_Y+X-1\big)+Y^{a_1}.
\end{align*}

Translating the notation of~\cite{Q} into ours,
\cite[Corollary~3.5]{Q} gives
\begin{align*}
T(G;X,Y)
&=
X[a_2]_X\big([a_1]_Y+X-1\big)+[a_1+1]_Y+X-1\\
&=
\big([a_2+1]_X-1\big)\big([a_1]_Y+X-1\big)+[a_1+1]_Y+X-1\\
&=
[a_2+1]_X\big([a_1]_Y+X-1\big)
-\big([a_1]_Y+X-1\big)+[a_1+1]_Y+X-1\\
&=
[a_2+1]_X\big([a_1]_Y+X-1\big)+Y^{a_1},
\end{align*}
which coincides with the formula obtained above.
}
\end{example}

Using the substitution rules \eqref{u_nalt} and \eqref{v_nalt},
Theorem~\ref{thm:coremain} has the following consequence.

\begin{corollary}
\label{cor:coremainalt}
Let $G$ be the Tait graph of a standard alternating rational link
diagram encoded by the continued fraction
$[0,a_1,a_2,\ldots,a_n]$, where 
$a_1,a_2,\ldots,a_n$ are positive.
Define the Laurent polynomial $P(G;X,Y)$ by
\[
P(G;X,Y)=
\begin{cases}
X^{-(a_{n-1}+a_{n-3}+\cdots)}\,T(G;X,Y), & \text{if $n$ is odd},\\
Y^{-(a_{n-1}+a_{n-3}+\cdots)}\,T(G;X,Y), & \text{if $n$ is even}.
\end{cases}
\]
Then
\[
P(G;X,Y)=
\begin{bmatrix}
1 & 0
\end{bmatrix}
B_n B_{n-1}\cdots B_1
\begin{bmatrix}
1\\
X-1
\end{bmatrix}.
\]

Here the matrix $B_k$ is given by
\[
B_k=
\begin{cases}
\begin{bmatrix}
\dfrac{[a_k]_Y\,Y^{a_{k-2}+a_{k-4}+\cdots}}
     {X^{a_{k-1}+a_{k-3}+\cdots}} & 1\\
1 & 0
\end{bmatrix}, & \text{if $k$ is odd},\\[18pt]
\begin{bmatrix}
\dfrac{[a_k]_X\,X^{a_{k-2}+a_{k-4}+\cdots}}
     {Y^{a_{k-1}+a_{k-3}+\cdots}} & 1\\
1 & 0
\end{bmatrix}, & \text{if $k$ is even}.
\end{cases}
\]
\end{corollary}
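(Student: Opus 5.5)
This follows from Theorem~\ref{thm:coremain} once we specialize it with the alternating substitution rules \eqref{u_nalt}, \eqref{v_nalt} and track the prefactor $u_n!!$. By Theorems~\ref{Ttensor} and~\ref{structure}, $T(G;X,Y)$ is the image of $T_n$ under the rules \eqref{eq:Xe+}--\eqref{eq:yo+}, with $x_+=y_+=1$, $X_+=X$, $Y_+=Y$. Theorem~\ref{thm:coremain} gives $T_n=u_n!!\,S_n$, where $S_n$ is the first coordinate of the product of the matrices $\begin{bmatrix} v_k u_{k-1}!!/u_k!! & 1\\ 1& 0\end{bmatrix}$ applied to $\begin{bmatrix}1\\ u\end{bmatrix}$. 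Under \eqref{eq:condsuvalt} we have $u=X-1$. It therefore suffices to show two things: first, that $u_n!!$ maps to $X^{a_{n-1}+a_{n-3}+\cdots}$ for odd $n$ and to $Y^{a_{n-1}+a_{n-3}+\cdots}$ for even $n$; second, that the $(1,1)$ entry $v_ku_{k-1}!!/u_k!!$ maps to the entry of $B_k$.

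To compute the semifactorials, I use \eqref{u_nalt} together with the convention $u_1\mapsto 1=X^{a_0}$ with $a_0=0$, and $u_0=u_{-1}=1$. For odd $k=2j+1$,
\[
u_k!!=u_1u_3\cdots u_{2j+1}\mapsto X^{a_0+a_2+\cdots+a_{2j}}=X^{a_{k-1}+a_{k-3}+\cdots}.
\]
For even $k=2j$,
\[
u_k!!=u_2u_4\cdots u_{2j}\mapsto Y^{a_1+a_3+\cdots+a_{2j-1}}=Y^{a_{k-1}+a_{k-3}+\cdots}.
\]
Taking $k=n$ gives the prefactor in the definition of $P(G;X,Y)$. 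Dividing $T(G;X,Y)$ by this prefactor leaves exactly the image of $S_n$.

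Next comes the quotient $u_{k-1}!!/u_k!!$. For odd $k$ the numerator $u_{k-1}!!$ is a product over even indices, so it maps to $Y^{a_{k-2}+a_{k-4}+\cdots}$, while the denominator maps to $X^{a_{k-1}+a_{k-3}+\cdots}$. Multiplying by $v_k\mapsto[a_k]_Y$ gives the odd case of $B_k$. For even $k$ the computation is symmetric, and multiplying by $v_k\mapsto[a_k]_X$ gives the even case of $B_k$. The endpoint $k=1$ needs a separate check: there $u_0!!=1$ and $u_1!!\mapsto 1$, so the entry is $[a_1]_Y$, which agrees with $B_1$ when the empty exponent sums are read as $0$. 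Similarly, $k=2$ gives $[a_2]_X/Y^{a_1}$, as claimed.

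The main obstacle is justification rather than computation. Theorem~\ref{thm:coremain} is stated in the fraction field $K$ of $\mathbb Z[\Lambda]/P$, under the assumption that no variable lies in $P$. Here the ordinary specialization $x_+=y_+=1$ is a ring map into $\mathbb Z[X,Y]$, not obviously a quotient by such a prime. I would handle this in one of two ways. One option is to observe that the kernel of the specialization into $\mathbb Z[X,Y]$ is a prime containing $I_1$, because \eqref{eq:condsuvalt} holds with $u=X-1$ and $v=Y-1$; it contains none of the variables, since their images are nonzero; and Theorem~\ref{thm:coremain} then applies in $\mathbb Q(X,Y)$. The other option is to bypass $K$ altogether: apply \eqref{eq:Tmatrixu} directly in $\mathbb Z[X,Y]$, note that all the $u_k$ map to monomials, and conjugate the product by the diagonal matrices $\operatorname{diag}(u_k!!,u_{k-1}!!)$ inside $\mathbb Z[X^{\pm1},Y^{\pm1}]$. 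Either way $P(G;X,Y)$ is a Laurent polynomial and the identity holds.
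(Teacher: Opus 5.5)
Your proposal is correct and follows the paper's route: the paper obtains this corollary by substituting \eqref{u_nalt} and \eqref{v_nalt} into Theorem~\ref{thm:coremain}, and your computation of $u_n!!$ and of the entries $v_ku_{k-1}!!/u_k!!$ (with the conventions $a_0=0$ and $u_0=u_{-1}=1$) carries out exactly that substitution. Your extra paragraph on why Theorem~\ref{thm:coremain} applies here addresses a point the paper leaves implicit, and both of your options work. For the first, what actually puts $I_1$ in the kernel is that the images of the core-graph variables satisfy $X_k-x_k=(X-1)y_k$ and $Y_k-y_k=(Y-1)x_k$ for every $k$, which follows from \eqref{TLC1}--\eqref{TLC2} or can be checked directly. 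The second option, diagonal conjugation in $\mathbb{Z}[X^{\pm1},Y^{\pm1}]$, works because every $u_k$ maps to a monomial.
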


\begin{remark}
{\em Substituting $Y=X^{-1}$ in Corollary~\ref{cor:coremainalt} yields
  $$
P(G;X,X^{-1})=X^{(-1)^n(a_{n-1}+a_{n-3}+\cdots )}T(G;X,X^{-1})\quad\mbox{and}
$$
\begin{align*}
B_k&\mapsto
\left[\begin{matrix}
[a_k]_{X^{(-1)^k}}\cdot X^{(-1)^k (a_1+a_2+\cdots+a_{k-1})}&1\\
1&0\\
  \end{matrix}\right]\\
&=
\left[\begin{matrix}
[a_1+\cdots+a_k]_{X^{(-1)^k}}-[a_1+\cdots+a_{k-1}]_{X^{(-1)^k}}&1\\
1&0\\
  \end{matrix}\right]
\end{align*}
}
\end{remark}  

Using the substitution rules \eqref{u_nalt} and \eqref{v_nalt},
Theorem~\ref{thm:lr} has the following consequence.   

\begin{corollary}
\label{cor:lralt}
Let $G$ be the Tait graph of a standard alternating rational link
diagram encoded by the continued fraction
$[0,a_1,a_2,\ldots,a_n]$, where 
$a_1,a_2,\ldots,a_n$ are positive.
Then the Laurent polynomial
\[
R(G;X,Y)
=
\frac{T(G;X,Y)}
{\prod_{1\le j\le \lfloor n/2\rfloor} X^{a_{2j}}
 \prod_{1\le j\le \lfloor (n+1)/2\rfloor} Y^{a_{2j-1}}}
\]
is given by
$$
R(G;X,Y)=\begin{bmatrix}
1 & 0\\
  \end{bmatrix}
C_n C_{n-1}\cdots C_1
\begin{bmatrix}
1\\
X-1\\
\end{bmatrix}.
$$
Here
$$
C_k=
\begin{cases}
\begin{bmatrix}
\frac{[a_{k}]_X}{X^{a_k}}& \frac{1}{X^{a_k}} \\
1& 0\\
\end{bmatrix},&\text{if $k$ is even;}\\[18pt]
\begin{bmatrix}
\frac{[a_k]_Y}{Y^{a_k}}& \frac{1}{Y^{a_k}} \\
1& 0\\
\end{bmatrix},&\text{if $k$ is odd.}\\
\end{cases}  
$$
\end{corollary}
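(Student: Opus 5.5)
This is a direct specialization of Theorem~\ref{thm:lr}. By Theorems~\ref{Ttensor} and~\ref{structure}, $T(G;X,Y)$ is obtained from $T_n=T(G_n)$ by the substitution rules \eqref{eq:Xe+}--\eqref{eq:yo+}, with $u=X-1$ by \eqref{eq:condsuvalt}. Theorem~\ref{thm:lr} is a polynomial identity in the field of fractions $K$: $T_n=R_n\cdot\prod X_{2j}\prod Y_{2j-1}$, with $R_n$ a product of the matrices $M_k$. The plan is to apply the substitution to both sides.

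First, the denominator. The rules give $X_{2j}\mapsto X^{a_{2j}}$ and $Y_{2j-1}\mapsto Y^{a_{2j-1}}$. Hence $\prod_{j\le\lfloor n/2\rfloor}X_{2j}\prod_{j\le\lfloor (n+1)/2\rfloor}Y_{2j-1}$ maps exactly to the denominator in the definition of $R(G;X,Y)$. This is a nonzero monomial, so division is legitimate: $R(G;X,Y)$ is the image of $R_n$.

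Next, the matrices $M_k$. For $k$ even, \eqref{eq:ye+} and \eqref{eq:xe+} give $y_k/X_k\mapsto [a_k]_X/X^{a_k}$ and $x_k/X_k\mapsto 1/X^{a_k}$, which is the even case of $C_k$. For $k$ odd, \eqref{eq:xo+} and \eqref{eq:yo+} give $x_k/Y_k\mapsto [a_k]_Y/Y^{a_k}$ and $y_k/Y_k\mapsto 1/Y^{a_k}$, which is the odd case of $C_k$. The variables $Y_{2j}$ and $X_{2j+1}$ for $j\ge1$ do not appear in the $M_k$, so their more complicated images play no role. The vector $[1,u]^T$ becomes $[1,X-1]^T$.

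The only subtle step is justifying that Theorem~\ref{thm:lr}, proved in $K$, survives the specialization. Two points need checking. First, the specialization map must factor through a prime ideal $P\supseteq I_1$ that contains none of the variables. It does: the substituted values satisfy \eqref{eq:ycondu} and \eqref{eq:xcondv} with $u=X-1$, $v=Y-1$, as the computation in Section~\ref{sec:path} shows. Second, the images must be nonzero. They are monomials or the polynomials $[a]_X$, $[a]_Y$, $[a]_Y+X-1$, $[a]_X+Y-1$, all nonzero in $\mathbb{Q}(X,Y)$.

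A cleaner route avoids these checks. Apply the specialization directly to the polynomial identity $T_n=\prod X_{2j}\prod Y_{2j-1}\cdot R_n$, where the right side, after clearing the denominators of the $M_k$, is a polynomial identity equivalent to Corollary~\ref{cor:tilings}. The tile weights there also specialize consistently.
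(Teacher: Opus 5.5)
Your proposal is correct and follows the paper's route: the paper obtains this corollary by specializing Theorem~\ref{thm:lr} under the alternating substitution rules. That is exactly your computation of the monomial denominator, the entries of the $C_k$, and the vector $[1,X-1]^T$ with $u=X-1$. Your additional check supplies a justification the paper leaves implicit: the specialization kills $I_1$ with common $u=X-1$, $v=Y-1$, and sends every variable to a nonzero element of a domain, so Theorem~\ref{thm:lr} legitimately transfers.
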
  

\begin{remark}
{\em Substituting $Y=X^{-1}$ in Corollary~\ref{cor:lralt} yields
$$R(G;X,X^{-1})=T(G;X,X^{-1})\cdot \prod_{1\leq k\leq n} X^{(-1)^{k-1}
    a_{k}}\quad\mbox{and}$$ 
  $$
C_k\mapsto \left[\begin{matrix}
[a_{k}]_{X^{(-1)^k}} X^{(-1)^{k-1}a_k}& X^{(-1)^{k-1} a_k} \\
1& 0\\
\end{matrix}\right].
  $$
}
\end{remark}

\section{Computing the writhe of a rational link diagram}
\label{sec:writhe}

Inspired by a finite automaton introduced in~\cite{DEH} and by the
illustrations in~\cite{Du}, we introduce two finite automata that can
be used to compute the writhe of any oriented rational link diagram in
standard form, together with a third automaton that computes the number
of connected components. These automata allow us to simplify the proofs
of the results of Qazaqzeh, Yasein, and Abu-Qamar on the writhe of
certain rational link diagrams~\cite[Propositions~4.2 and 4.3]{Q}.
Moreover, our approach also applies to rational link diagrams with an
even number of twist boxes, whereas in~\cite{Q} rational
link diagrams are required to be in {\em canonical form}, meaning that they must have an odd number of twist boxes. This is the point where our approach is genuinely more general.
Extending the method beyond alternating diagrams is in fact not essential,
because of Lemma~\ref{lem:writhe} below. Given an oriented link diagram
$\mathcal{D}$, we call the unoriented diagram obtained by simply
``forgetting'' the orientation of its strands the {\em underlying
unoriented link diagram}.  

\begin{lemma}
\label{lem:writhe}  
Let $\mathcal{D}$ be an oriented rational link diagram whose
underlying unoriented diagram is in standard form and encoded by the
continued fraction $[0,a_1,a_2,\ldots,a_n]$. Let $\mathcal{D}'$ be the
alternating oriented link diagram obtained from $\mathcal{D}$ by
changing all overcrossings to undercrossings in the twist boxes
associated with negative partial denominators $a_i$. The underlying
unoriented diagram of $\mathcal{D}'$ is then encoded by
$[0,|a_1|,|a_2|,\ldots,|a_n|]$.

If the writhe of $\mathcal{D}'$ is
\[
\sum_{i=1}^n s_i\,|a_i|, \qquad s_i\in\{1,-1\},
\]
then the writhe of $\mathcal{D}$ is
\[
\sum_{i=1}^n s_i\,a_i .
\]
\end{lemma}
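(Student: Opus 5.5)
The writhe is a sum of crossing signs over all crossings, and the crossings of $\mathcal{D}$ and $\mathcal{D}'$ are in natural bijection. Crossing changes do not alter the underlying projection (the 4-valent plane graph), so the orientation of every strand, as a direction of travel along the projection, is the same in $\mathcal{D}$ and $\mathcal{D}'$. I will therefore argue twist box by twist box. The plan is to show two things. First, within a single twist box all $|a_i|$ crossings of $\mathcal{D}'$ carry the same crossing sign; call it $s_i$. Second, the crossings of the same box in $\mathcal{D}$ all carry the sign $s_i\,\sign(a_i)$. Summing then gives $\sum_i |a_i| s_i\sign(a_i)=\sum_i s_i a_i$.

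I first verify the claim about the underlying diagram of $\mathcal{D}'$. By the twist sign convention of Figure~\ref{fig:rationallink} and \eqref{eq:tsign}, the twist sign of a crossing in box $B_i$ is determined by which strand is over. Switching over- and undercrossings in box $B_i$ therefore reverses its twist sign, which is the same as replacing $a_i$ by $-a_i$ while keeping the box size $|a_i|$. Hence $\mathcal{D}'$ is the standard diagram of $[0,|a_1|,\ldots,|a_n|]$. By the criterion recalled after \eqref{eq:tsign}, a diagram whose partial denominators all have the same sign is alternating, so $\mathcal{D}'$ is indeed alternating.

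Next I check that crossing signs are constant within each twist box of $\mathcal{D}'$. A twist box consists of two strands twisted around each other $|a_i|$ times. Following the two strands through the box, each strand keeps its direction of travel, so the two strands are either parallel or antiparallel throughout the box. In a twist region, consecutive crossings are related by a half-turn of the pair of strands. Such a half-turn preserves both the handedness of the twist and the relative orientation of the two strands, and hence preserves the crossing sign. So every crossing in the box has a common sign $s_i\in\{1,-1\}$. The sign is well defined because $|a_i|\ge1$, and this matches the hypothesis that the writhe of $\mathcal{D}'$ has the form $\sum s_i|a_i|$. This step is the main obstacle: it needs a clean local picture argument, and I would handle it by drawing a single half-twist with both orientation cases, parallel and antiparallel.

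Finally, I compare the two diagrams box by box. Changing a crossing from over to under, while keeping the orientations of its strands, negates its crossing sign. This is visible directly from the right-hand side of Figure~\ref{channel}. In $\mathcal{D}'$ the crossings of box $i$ were switched exactly when $a_i<0$, so each crossing of that box in $\mathcal{D}$ has sign $s_i\,\sign(a_i)$. The contribution of box $i$ to $w(\mathcal{D})$ is then $|a_i|\,s_i\,\sign(a_i)=s_i a_i$, and summing over $i$ completes the proof.
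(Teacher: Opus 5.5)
Your proposal is correct and follows essentially the same route as the paper. Both arguments rest on three facts: all crossings in a twist box share a common sign; switching the crossings of a box with $a_i<0$ leaves the strand orientations unchanged; and such a switch negates each crossing sign, so the contribution $s_i a_i$ becomes $s_i|a_i|$. Your version is somewhat more complete, since you justify the constancy of crossing signs within a box and the encoding $[0,|a_1|,\ldots,|a_n|]$ of $\mathcal{D}'$, whereas the paper simply asserts both.
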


\begin{proof}
The crossings in the twist box
represented by $a_i$ contribute $s_i a_i$ to the writhe for some
$s_i\in\{1,-1\}$. The same also holds for $\mathcal{D}'$.
Thus it suffices to show that the same coefficients $s_i$ occur in both
expressions.

Consider the operation of changing all overcrossings to undercrossings
in a twist box corresponding to some $a_i<0$. This move changes the
link represented by the diagram, but it does not change the number of
components or the orientations of the strands. Consequently the sign
of every crossing in that twist box changes to its opposite. At the
same time we replace $a_i<0$ with $|a_i|>0$. These two changes cancel
each other in the writhe computation. If the original twist box
contributes $s_i a_i$ to the writhe of $\mathcal{D}$, then the modified
twist box contributes $s_i |a_i|$ to the writhe of $\mathcal{D}'$.
\end{proof}

Consider an unoriented rational link diagram in standard form encoded by the 
continued fraction $[0,a_1,a_2,\ldots,a_n]$.
We number its strands $1$, $2$, $3$, and $4$ from top to bottom.
Next we turn our unoriented link diagram into an oriented link diagram.
Without loss of generality we may assume that the lowest strand is
oriented from right to left, as shown in Figure~\ref{fig:rationallink}.
This choice also determines the orientation at the left end of the
second lowest strand. To construct our automaton we will use the sample
links shown in Figure~\ref{fig:samplelink} as a guide.

\begin{figure}[htb]
\begin{center}
  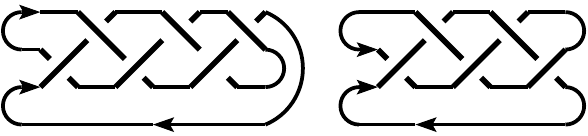
\end{center}
\caption{Two sample links}
\label{fig:samplelink}
\end{figure}

At the left end of each link, strand $1$ is connected with strand $2$
and strand $3$ is connected with strand $4$. Taking the orientation of
the strands into account, we may write these connections as ordered
pairs: if, following the orientation, we go from strand $i$ to strand
$j$, we write the ordered pair $(i,j)$. Initially we may assume that
the orientation of the bottom two strands is $(4,3)$ and we have two
possible initial states:
$(2,1)(4,3)^*$ (as on the left-hand side of
Figure~\ref{fig:samplelink}) and $(1,2)(4,3)^*$ (as on the right-hand
side of Figure~\ref{fig:samplelink}). 

We use a star to indicate that currently we have an even number of
twist boxes (at the beginning this number is zero). If we want to complete
the drawing of our link at this point, we must use the closing shown in
the second row of Figure~\ref{fig:rationallink}. We omit the star for the
states reached after parsing an odd number of twist boxes. If we stop in
any of those states, we must use the closing shown in the first row of
Figure~\ref{fig:rationallink}. We refer to the starred states as
{\em even states} and to the unstarred states as {\em odd states}, and
to the presence or absence of the star as the {\em parity} of a state.

\begin{figure}[htb]
\begin{center}
  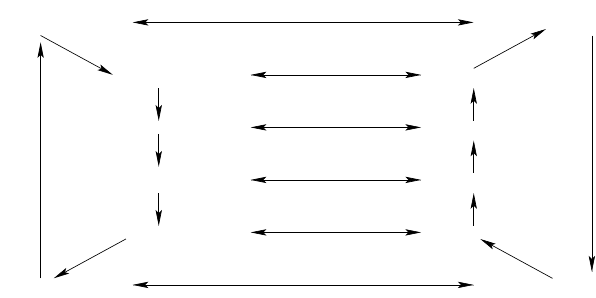
\end{center}
\caption{An automaton that helps compute the writhe and the number of
connected components of a rational link diagram}
\label{fig:big_automaton}
\end{figure}

These two possible initial states are underlined in
Figure~\ref{fig:big_automaton}. A common feature of the links shown in
Figure~\ref{fig:samplelink} is that there is exactly one crossing (an
odd number) in each twist box. Parsing these sample links we may easily
describe the states reached from the initial states by using the
transition rule corresponding to the case when $a_i$ is odd. Parsing the
sample link on the left-hand side yields the oriented cycle
\begin{align*}
&(2,1)(4,3)^* \rightarrow (3,1)(4,2) \rightarrow (3,2)(4,1)^*
\rightarrow (2,3)(4,1)\\
&\rightarrow (1,3)(4,2)^* \rightarrow (1,2)(4,3)
\rightarrow (2,1)(4,3)^* .
\end{align*}
We arrive again at the original state, and to close the link we must
connect strand $1$ with strand $4$ and strand $2$ with strand $3$. The
result is a knot. Note that we only need to know
the final state to determine the number of connected components: if we
stop at an even state we always pair the strands $\{1,4\}$ and
$\{2,3\}$, whereas if we stop at an odd state we must pair the strands
$\{1,2\}$ and $\{3,4\}$.

It is easy to check that the state $(2,3)(4,1)$ also represents a knot,
the states $(3,2)(4,1)^*$ and $(1,2)(4,3)$ represent two-component
links, whereas $(3,1)(4,2)$ and $(1,3)(4,2)^*$ are not valid stopping
states: closing the strands in these cases yields a conflict of orientations.
The letters $K$, $L$, and $I$ in Figure~\ref{fig:big_automaton}
indicate whether stopping at the corresponding state yields a knot, a
(two-component) link, or an invalid result.

Similarly to the above explanation, parsing the sample link on the right-hand
side yields the oriented path
\begin{align*}
&(1,2)(4,3)^* \rightarrow (1,3)(4,2) \rightarrow (2,3)(4,1)^* \rightarrow
(3,2)(4,1)\\
&\rightarrow (3,1)(4,2)^* \rightarrow (2,1)(4,3),
\end{align*}
and our sample link has two components.
The completion of Figure~\ref{fig:big_automaton} is easy using the following
two observations:
\begin{enumerate}
\item Reading a twist box containing an even number of crossings takes
the state $(i,j)(k,l)$ into the state $(i,j)(k,l)^*$ and vice versa.
\item Applying a central symmetry to Figure~\ref{fig:big_automaton}
corresponds to reversing the ordered pair not containing $4$:
$(1,2)(4,3)^*$ goes into $(2,1)(4,3)^*$, and $(2,3)(4,1)$
goes into $(3,2)(4,1)$. This involution matches an $L$-state to
another $L$-state and an $I$-state to a $K$-state.
\end{enumerate}

The main results of this section state that the labelings in
Figure~\ref{fig:big_automaton} are correct.

\begin{theorem}
\label{thm:components}
Consider an unoriented rational link diagram encoded by
$[0,a_1,a_2,\ldots,a_n]$. The left ends of this diagram may be oriented as
indicated in a starting state of the automaton shown in
Figure~\ref{fig:big_automaton} if and only if reading the vector
$(a_1,a_2,\ldots,a_n)$ results in a final state that is not marked
$I$. The unoriented link diagram represents a knot if and only if the
corresponding final state in Figure~\ref{fig:big_automaton} is marked
$K$ or $I$.
\end{theorem}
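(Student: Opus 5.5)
We will prove Theorem~\ref{thm:components} by induction on the number $n$ of twist boxes. The invariant maintained is that the automaton state after reading $(a_1,\ldots,a_k)$ records how the four strand ends at the right edge of the partial diagram are connected and oriented. The partial diagram consists of the left closing followed by twist boxes $B_1,\ldots,B_k$. The record has two parts. First, it lists the two arcs through the left part as ordered pairs $(i,j)$: travelling along the arc in the direction of its orientation, one enters at right end $i$ and leaves at right end $j$, where entering and leaving are read at the right boundary. Second, it records the parity of $k$ (the star). Note that ``entering'' and ``leaving'' here refer only to the right boundary.

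\emph{Base case and transition rules.} The base case $k=0$ is the left closing itself. This closing joins $1$ with $2$ and $3$ with $4$, and the convention orients strand $4$ as $(4,3)$. This gives exactly the two underlined initial states. For the inductive step, recall that odd-indexed boxes twist strands $2,3$ and even-indexed boxes twist strands $3,4$ (as in Figure~\ref{fig:rationallink}); we read this off the picture.
\begin{itemize}
\item A twist box with an even number of half-twists returns each strand to its original position, so the arcs are unchanged and only the parity flips. This is observation (1).
\item A box with an odd number of half-twists acts as the transposition of the two twisted positions, followed by the parity flip.
\end{itemize}
The crossing signs and the over/under information are irrelevant here. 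Only the permutation of endpoints matters, and it depends only on $a_i \bmod 2$. So the state after $k+1$ boxes is obtained from the state after $k$ boxes by applying either the identity or the appropriate transposition. Then we renormalize so that the pair containing $4$ is written second, and reverse a pair if needed so that it keeps its entering/leaving meaning. Checking the edges of Figure~\ref{fig:big_automaton} is then a finite verification. The two sample cycles computed in the text already exhibit all odd transitions, and the central symmetry (observation (2)) together with the even transitions supplies the rest.

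\emph{Closing.} At the end we apply the right closing: it pairs $\{1,4\},\{2,3\}$ in an even state and $\{1,2\},\{3,4\}$ in an odd state. For each of the twelve states we decide two things:
\begin{enumerate}
\item Is the orientation consistent? Each closing arc must join an end where a strand leaves to an end where a strand enters. Otherwise the state is marked $I$.
\item How many components result? Adding the closing arcs to the two open arcs gives either one closed curve or two.
\end{enumerate}
This is a simple combinatorial check on each state $(i,j)(k,l)$. Consistency holds iff each closing pair contains exactly one first coordinate and one second coordinate. The component count is $1$ iff the closing pairs do not match the arcs' endpoint sets $\{i,j\},\{k,l\}$. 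Carrying this out reproduces the labels $K$, $L$, $I$. For example, $(2,1)(4,3)^*$ closed by $\{1,4\},\{2,3\}$ is consistent and gives a knot.

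\emph{Conclusion.} The first claim of the theorem follows because an orientation of the left ends extends to the whole diagram exactly when the final closing is consistent. The second claim needs one extra observation, since an $I$ state is still a knot. By the central-symmetry involution, reversing the orientation of the non-$4$ arc maps $I$ states to $K$ states. Hence if the final state is $I$, the other choice of initial orientation ends in a $K$ state, so the unoriented diagram is a knot. (For a knot only one of the two starting orientations is valid.) Conversely, $L$ states are mapped to $L$ states, so the diagram is a two-component link.

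The main obstacle is the bookkeeping in the transition step. We must define the normalization of states precisely, especially what ``entering'' means at the right boundary and how the pairs are reordered after a transposition. Only with that in place can the transitions and the $K/L/I$ labels be verified uniformly rather than figure by figure. We would fix this convention first and then derive the transition table from it.
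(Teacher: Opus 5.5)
There is a concrete error in your transition step. The even-indexed twist boxes do not twist strands $3,4$.

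In the standard diagram the lowest strand $4$ runs straight across and is never twisted. That is why it can be oriented right to left once and for all, and why every state in Figure~\ref{fig:big_automaton} has the form $(\cdot,\cdot)(4,\cdot)$. Odd-indexed boxes twist strands $2,3$ and even-indexed boxes twist strands $1,2$. This agrees with the permutation labeling $(23)^{a_1}(12)^{a_2}\cdots$ at the end of Section~\ref{sec:writhe} and with the sample cycle in the text.

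Under your rule, an even-indexed box with an odd number of crossings, applied to $(2,3)(4,1)$, gives the pairs $(2,4)$ and $(3,1)$. Here strand $4$ becomes a leaving end. This state does not occur in the automaton, and it contradicts the transition $(2,3)(4,1)\to(1,3)(4,2)^*$ computed in the text. So the finite verification you describe would fail as set up.

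With the correct positions, the renormalization you identify as the main obstacle disappears. A transposition of positions $\{2,3\}$ or $\{1,2\}$ just relabels entries of the ordered pairs, and $4$ stays in the first slot of its pair. No pair ever needs reversing. In fact, reversing a pair changes the orientation it encodes, so that step as you wrote it is not legitimate.

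With this fixed, your argument works and is a more explicit version of the paper's. Your closing criteria are correct:
\begin{itemize}
\item a state $(i,j)(k,l)$ is consistent iff each closing pair contains one first and one second coordinate;
\item it yields a knot iff the closing matching differs from $\{\{i,j\},\{k,l\}\}$.
\end{itemize}
These reproduce all $K$, $L$, $I$ labels.

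The paper handles the component count differently. It passes to a quotient automaton on unordered pairs, where the count visibly depends only on the left pairing and the parity. It then lifts back, noting that each unoriented state has two oriented preimages: both valid for links, one valid and one invalid for knots. Your direct check gives the same result and makes the consistency test explicit, which the paper leaves to inspection.

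Your final appeal to the central symmetry is unnecessary and slightly circular, since observation (2) is part of what the theorem certifies. Your closing check already computes the component count of an $I$ state from the unoriented matching. Also, a two-component closing is automatically consistent, so every inconsistent state is a knot.
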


\begin{proof}
We prove this statement by presenting a ``quotient'' of the
automaton shown in Fig.~\ref{fig:big_automaton}. This quotient, shown in
Fig.~\ref{fig:c-automaton}, is obtained by identifying the state
$(a,b)(4,c)^*$ with the state $(b,a)(4,c)^*$ and the state $(a,b)(4,c)$
with the state $(b,a)(4,c)$.

\begin{figure}[htb]
\begin{center}
  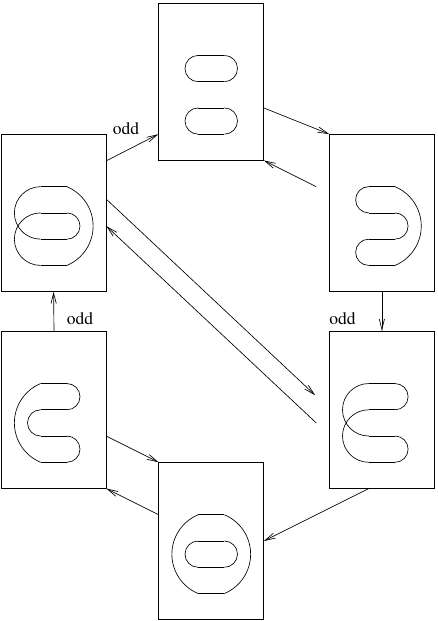
\end{center}
\caption{A simplified automaton that helps compute the number of
connected components of a rational link diagram}
\label{fig:c-automaton}
\end{figure}

This identification corresponds to forgetting the orientation of the
strands, hence we label the states with unordered pairs of numbers.
Regardless of which state we are in that corresponds to the same pairing
of strands on the left, the number of connected components depends only
on the pairing of the strands on the right, which in turn depends only
on the parity of the number of twist boxes parsed so far. It depends
only on the parity of $a_i$ which pairing of the four strands we obtain
next. The visual left and right pairings of the strands at each state
are shown in Fig.~\ref{fig:c-automaton}. It is immediate to see which
states represent (unoriented) knots and which represent two-component
links. Each state of Fig.~\ref{fig:c-automaton} corresponds to a pair of
states in Fig.~\ref{fig:big_automaton}: for two-component links both
states represent valid orientations, whereas for knots one state
corresponds to a valid oriented knot diagram and the other to an
invalid final state.
\end{proof}

\begin{theorem}
\label{thm:writhe}
To compute the writhe of the corresponding oriented link diagram,
multiply each $a_i$ by the sign associated with the state reached
immediately before reading $a_i$.
\end{theorem}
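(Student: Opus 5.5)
First, Lemma~\ref{lem:writhe} lets us assume all $a_i>0$: it says the writhe of $\mathcal{D}$ is $\sum s_i a_i$ as soon as the writhe of the alternating diagram $\mathcal{D}'$ is $\sum s_i|a_i|$, with the \emph{same} coefficients $s_i$. The coefficient $s_i$ depends only on the orientations of the strands entering twist box $B_i$. That is exactly the information recorded by the state reached before reading $a_i$. Flipping all crossings of a box changes neither the component structure nor the strand orientations, so the automaton runs identically on $(a_1,\ldots,a_n)$ and on $(|a_1|,\ldots,|a_n|)$. Hence it suffices to prove the statement for positive $a_i$, and more precisely to show: the crossings of $B_i$ all have the same crossing sign, and that sign is the sign attached in Figure~\ref{fig:big_automaton} to the state before $B_i$.

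Next I show that the sign is constant inside a twist box and is read off from the entering state. A twist box $B_i$ involves two adjacent strands: strands $2,3$ for odd $i$ and strands $3,4$ for even $i$, following Figure~\ref{fig:rationallink}. Inside the box the two strands cross $|a_i|$ times, swapping positions at each crossing, and each strand keeps its direction of travel (left-to-right or right-to-left) along the box. So every crossing in the box sees the same pair of directions, and has the same crossing sign. That sign is determined by three things: whether the two strands are parallel or antiparallel, which one goes over, and the twist sign $\sign_t(a_i)=(-1)^{i-1}$ from \eqref{eq:tsignalt}. The parity of $i$ is the parity of the state, given by the star. The directions of the strands at the left end of the box are encoded by the ordered pairs in the state. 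A pair $(i,j)$ means that, following the orientation, we travel from strand $i$ to strand $j$ through the left closure. So the strand with first coordinate $i$ runs right-to-left and the one with second coordinate $j$ runs left-to-right. Strand $4$ is always a first coordinate, consistent with the lowest strand being oriented right to left.

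The verification then goes state by state over the twelve states of Figure~\ref{fig:big_automaton}. For each state I determine the directions of the two strands entering the next box, apply Figure~\ref{channel} with the appropriate twist sign, and compare with the label. I would organize this by parity. For even-starred states the box uses strands $2,3$ with twist sign $+$. Parallel strands (both in first coordinates, or both in second coordinates) give one fixed sign, and antiparallel strands give the opposite sign. For odd states the box uses strands $3,4$ with twist sign $-$, so the parallel and antiparallel cases give the reversed signs. The central symmetry in observation (2) reverses the pair not containing $4$, and it preserves or flips parallelism in a controlled way, which halves the number of checks. The sample links of Figure~\ref{fig:samplelink} then serve as sanity checks.

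I expect the main obstacle to be the bookkeeping of orientations. I have to make sure the state stores the directions of the strands at the left end of the \emph{next} box, not at the left end of the diagram. This needs a small induction: reading a box with $a_i$ crossings permutes which horizontal strand carries which oriented arc when $a_i$ is odd, and leaves the positions unchanged when $a_i$ is even. I would check that this matches the transitions already derived from the sample links, namely the odd-$a_i$ cycle and observation (1) for even $a_i$. Once this invariant is established, the sign assignment is a local check of Figure~\ref{channel} in each of the cases.
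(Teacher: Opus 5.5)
There is one concrete error to fix, though your strategy is otherwise the paper's. You reduce to positive $a_i$ via Lemma~\ref{lem:writhe}; your remark that the automaton only sees the parities of the $a_i$, so $\mathcal{D}$ and $\mathcal{D}'$ pass through the same states, is a detail the paper leaves implicit. You then check locally that the crossing sign in the next box is determined by the parity together with the directions of the strands entering it. That is exactly what the merged automaton of Figure~\ref{fig:s-automaton} encodes, and your ``parallel/antiparallel times twist sign'' rule is a tidy way to carry out the check.

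The error is that you have misidentified the strands. The even-indexed boxes, read from the unstarred states, twist strands $1$ and $2$, not $3$ and $4$. Strand $4$ is never twisted at all. That is precisely why it always runs right to left and always appears as a first coordinate, so your two claims contradict each other. The transitions confirm this: an odd $a_i$ acts by $(23)$ for odd $i$ and by $(12)$ for even $i$ (compare the labels in Figure~\ref{fig:big_automaton_perm}). For example, the sample paths contain $(2,3)(4,1)\to(1,3)(4,2)^*$ and $(1,2)(4,3)\to(2,1)(4,3)^*$.

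The consequence falls on the step you yourself single out as the main obstacle: the induction showing that the state records the orientations at the left end of the next box. As planned, it fails. Swapping strands $3$ and $4$ would send $(2,3)(4,1)$ to $(2,4)(3,1)^*$ and $(1,2)(4,3)$ to $(1,2)(3,4)^*$. In these states the bottom strand runs left to right, and they do not match the transitions of Figure~\ref{fig:big_automaton}.

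The per-state sign check itself survives the slip. Strand $4$ and exactly one other strand run right to left, so strands $1,2$ are parallel if and only if strands $3,4$ are, and your case analysis would still produce the right labels. Once you replace ``strands $3,4$'' by ``strands $1,2$'' for even $i$, the induction matches the automaton's transitions and your argument is complete, essentially as in the paper.
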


\begin{proof}
By Lemma~\ref{lem:writhe}, it suffices to prove the statement for the
special case when all $a_i$ are positive. In this case the diagram
represents an alternating link. By inspecting the sample oriented links
shown in Figure~\ref{fig:samplelink}, it is easy to see that the signs
of the crossings in the twist boxes are exactly as indicated in the
states of our automaton given in Figure~\ref{fig:big_automaton}.

Another way to verify the correctness of the signs is to consider the
simplified automaton shown in Figure~\ref{fig:s-automaton}.

\begin{figure}[htb]
\begin{center}
  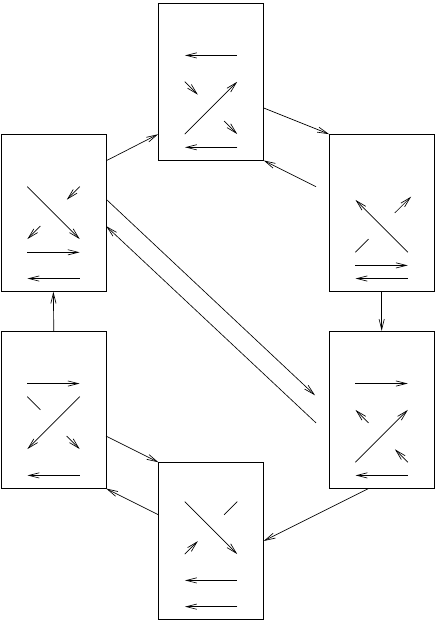
\end{center}
\caption{A simplified automaton that helps compute the writhe of a rational link diagram}
\label{fig:s-automaton}
\end{figure}

This automaton is obtained by merging pairs of states of the same
parity that represent the same orientation of the strands immediately
before the next twist box. For example, the states $(1,3)(4,2)^*$ and
$(1,2)(4,3)^*$ are the two even states in which strands $1$ and $4$
are oriented from right to left and strands $2$ and $3$ are oriented
from left to right. Because these are even states, the crossings in
the next twist box involve strands $2$ and $3$. When all $a_i$ are
positive, the crossings appear exactly as shown in the diagram
associated with the state. The description of the states
$(1,3)(4,2)$ and $(1,2)(4,3)$ is completely analogous, the only
difference being that the crossings in the next twist box involve
strands $1$ and $2$.

In general the state $(u_1,v_1)(u_2,v_2)$ is paired with the state
$(u_1,v_2)(u_2,v_1)$, and the state $(u_1,v_1)(u_2,v_2)^*$ is paired
with the state $(u_1,v_2)(u_2,v_1)^*$. If we start from either state
of such a pair, after reading the next $a_i$ we arrive at one of two
states that are again paired. Thus the automaton shown in
Figure~\ref{fig:s-automaton} is also a homomorphic image of the
automaton shown in Figure~\ref{fig:big_automaton}.
\end{proof}

We conclude this section with a relabeled variant of the automaton shown
in Figure~\ref{fig:big_automaton}. The pairs of permutations labeling
the states in Figure~\ref{fig:big_automaton_perm} extend the notation
introduced in~\cite{Q}. Our labeling of the strands is the same as
theirs. In~\cite{Q} each canonical oriented link diagram $D$ is represented by
a permutation
\[
\sigma_D=(23)^{a_1}(12)^{a_2}\cdots(12)^{a_{n-1}}(23)^{a_n}
\]
of the set $\{1,2,3\}$, where $n$ is an odd integer and
$[0,a_1,\ldots,a_n]$ is the continued fraction representing the
underlying unoriented rational link. In~\cite{Q} the partial
denominators $a_i$ are all positive integers, and multiplication of the
involutions is performed from left to right.

We extend this permutation labeling to all states of our automaton
shown in Figure~\ref{fig:big_automaton}. Allowing negative integers
$a_i$ does not make a significant difference, since the involutions
$(12)$ and $(23)$ are their own inverses. Multiplication by such an
involution occurs when we read an odd $a_i$, otherwise we copy the same
permutation and toggle the presence of a star. If $i$ is odd and $a_i$
is odd, then we follow an arrow leading from an even state to an odd
state and multiply by $(23)$. If $i$ is even and $a_i$ is odd, then we
follow an arrow from an odd state to an even state and multiply by
$(12)$. Note that the parity of $i$ may be determined simply by
checking which end of the arrow is marked by a star.

\begin{figure}[htb]
\begin{center}
  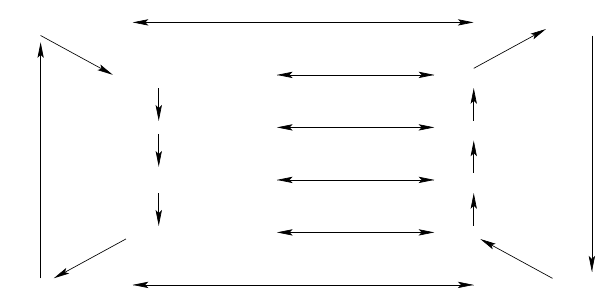
\end{center}
\caption{The automaton shown in Figure~\ref{fig:big_automaton} with
permutations labeling its states}
\label{fig:big_automaton_perm}
\end{figure}

Each state is labeled by a pair of permutations because there are two
valid starting states. The first permutation corresponds to starting
at the state labeled with the underlined identity permutation
$\underline{\mathrm{id}}^*$ in the upper left corner, while the second
permutation corresponds to starting at the state labeled with
$\underline{\mathrm{id}}^*$ in the lower right corner. We preserve the
symbols $K$, $I$, and $L$ indicating whether the current state is a
knot state, a link state, or an invalid stopping state.

Qazaqzeh, Yasein, and Abu-Qamar impose the additional restriction that
the diagram must be {\em canonical}, meaning that it contains an odd
number of twist boxes. In terms of our diagram this means that they
allow stopping only in an odd state. To obtain a canonical knot we must
either start in the upper left corner and stop at the state whose label
has first coordinate $(13)$ or $(132)$, or start in the lower right
corner and stop at the state whose label has second coordinate $(23)$
or $(123)$. Note that these permutations are pairwise distinct, so the
starting state can be reconstructed from the final permutation together
with the assumption that the diagram represents a canonical knot.

These possibilities are discussed in~\cite[Proposition~4.3]{Q}. Three
cases are distinguished there:

\begin{enumerate}
\item If the label of the final state is $(23)$ or $(123)$, then we must
have started in the lower right corner. This state carries a negative
sign, and so do both states reachable from it in a single step. The
writhe formula therefore begins with $-a_1-a_2$, to which we add the
contributions of the remaining partial denominators.

\item If the label of the final state is $(13)$ or $(132)$, then we must
have started in the upper left corner. This state has a positive sign,
but the sign of the next state depends on the parity of $a_1$. This
gives rise to two cases in~\cite[Proposition~4.3]{Q}: the writhe formula
begins with $a_1-a_2$ if $a_1$ is even and $a_1+a_2$ if $a_1$ is odd.
\end{enumerate}

A similar analysis of arriving at the even link states is left to the
reader. A proof of~\cite[Proposition~4.4]{Q} may be recovered in this
way. Here we only highlight the following observation made in~\cite{Q}.
In all cases a writhe formula of the form
\[
w(D)=\sum_{i=1}^n s_i a_i
\]
can be obtained with each $s_i\in\{1,-1\}$. The determination of
$s_1$ and $s_2$ varies from case to case, but for the remaining
coefficients the following rule holds in all cases:
\begin{equation}
\label{eq:signs}
s_i=
\begin{cases}
-s_{i-2} & \text{if $a_{i-1}$ is odd, $i-1$ is even, and $s_{i-1}=-1$,}\\
-s_{i-2} & \text{if $a_{i-1}$ is odd, $i-1$ is odd, and $s_{i-1}=1$,}\\
s_{i-2} & \text{otherwise.}
\end{cases}
\end{equation}

Equation~\eqref{eq:signs} can be verified by inspecting
Figure~\ref{fig:s-automaton}. Consider the state reached immediately
before reading $a_{i-2}$. The sign associated with this state is
$s_{i-2}$, while $s_i$ is the sign of the state reached after reading
$a_{i-2}$ and $a_{i-1}$, that is, after moving along two consecutive
arrows. For each state there is exactly one pair of arrows that leads to
a state of opposite sign in two steps. These possibilities are listed in
Table~\ref{tab:signchange}.

\begin{table}[h]
\centering
\begin{tabular}{l|c|c||c|c}
State before $a_{i-2}$ & $a_{i-2}$ & $a_{i-1}$ & $i-1$ & $s_{i-1}$\\
\hline
$(1,3)(4,2)^*$/$(1,2)(4,3)^*$ & any  & odd & even & $-1$\\
$(1,2)(4,3)$/$(1,3)(4,2)$     & odd  & odd & odd  & $1$\\
$(2,1)(4,3)^*$/$(2,3)(4,1)^*$ & even & odd & even & $-1$\\
$(3,1)(4,2)$/$(3,2)(4,1)$     & any  & odd & odd  & $1$\\
$(3,2)(4,1)^*$/$(3,1)(4,2)^*$ & odd  & odd & even & $-1$\\
$(2,3)(4,1)$/$(2,1)(4,3)$     & any  & odd & odd  & $-1$\\
\end{tabular}
\vspace{0.1in}
\caption{A complete list of cases where $s_i=-s_{i-2}$}
\label{tab:signchange}
\end{table}

The choices are made in the first three columns; the last two columns
follow from these. In particular, $i-1$ is odd if and only if the state
reached after the first step is an even state, and the last column gives
the sign of the state reached after one step along the unique
sign-changing directed path of length two. Since the table represents a
complete enumeration of possibilities, equation~\eqref{eq:signs}
follows.

Using our automaton also allows the computation of the writhe formula
for rational link diagrams containing an even number of twist boxes.
Rather than producing separate formulas case by case, we emphasize that
only the determination of $s_1$ and $s_2$ requires special
consideration; equation~\eqref{eq:signs} always determines the
remaining coefficients in the writhe formula.

\begin{example}{\em
Consider the unoriented rational link diagram encoded by the continued fraction
\[
[0,3,7,-5,3,-4,6,5,4,6,-8,9,3,2,4].
\]
For the orientation assignment corresponding to the initial state
$(2,1)(4,3)^*$, the final state determined by the automaton in
Figure~\ref{fig:big_automaton} is $I(1,2)(4,3)^*$. Hence this
orientation assignment is invalid. The other orientation assignment corresponds to the initial state
$(1,2)(4,3)^*$ and leads to the final state $K(2,1)(4,3)^*$. In this
case we obtain
$
s_1=s_2=-1$, $s_3=s_4=s_5=s_7=1$,
$s_8=-1$,
$s_9=1$,
$s_{10}=-1$,
$s_{11}=s_{12}=s_{13}=s_{14}=1$.
It follows that
\[
w(D)
=-3-7-5+3-4+6+5-4-6+8+9+3+2+4
=11.
\]

Note that this computation does not require drawing the link diagram,
and the procedure is readily programmable.
}
\end{example}

\section{Concluding remarks}
\label{sec:concl}

The Jones polynomial $V(t)$ of a link may be obtained from its HOMFLY
polynomial by setting $l=it^{-1}$ and
$m=i(t^{-\frac{1}{2}}-t^{\frac{1}{2}})$ according to the notation
of~\cite{Li-Mi}, or equivalently by setting
\begin{equation}
\label{eq:HOMFLYJones}
a=-t^{-1}\quad\mbox{and}\quad z=t^{-\frac{1}{2}}-t^{\frac{1}{2}},
\end{equation}
according to the notation in~\cite{DEH}. A formula for the HOMFLY
polynomial of an oriented rational link is given by Lickorish and
Millett~\cite{Li-Mi}. In their formula the rational link must be
represented by a diagram encoded by $[0,a_1,a_2,\ldots,a_n]$ in which
all partial denominators $a_i$ are even. A translation of the
Lickorish--Millett result to alternating links may be found in
\cite[Theorem~7.5]{DEH}. The Jones polynomial formulas obtained by
substituting into these HOMFLY polynomial formulas appear somewhat
cumbersome, although computationally they are about as ``efficient'' as
the Tait graph approach. Rather than reproducing the explicit
computations, we highlight the following observation.

The formulas in~\cite{DEH} contain substitutions into the {\em
Fibonacci polynomials $F_n(x)$}, defined by the initial conditions
\begin{equation}
F_0(x)=0\quad\mbox{and}\quad F_1(x)=1,
\end{equation}
and the recurrence
\begin{equation}
\label{eq:fibrec}
F_{n+1}(x)=xF_n(x)+F_{n-1}(x)\quad\mbox{for $n\geq1$}.
\end{equation}
These polynomials satisfy the following remarkable identity
\begin{equation}
\label{eq:fib}
F_n\!\left(x-\frac{1}{x}\right)=x^{\,n-1}\,[n]_{-x^{-2}}.
\end{equation}
A straightforward induction proof is left to the reader. As a
consequence of~\eqref{eq:fib}, the substitutions into the Fibonacci
polynomials in~\cite[Theorem~7.5]{DEH} take the form of $q$-analogues of
integers, similarly to the formulas obtained using the colored Tait
graph approach.

By Theorem~\ref{thm:t=-1}, the Jones polynomial encodes at least the
numerator (or the denominator, if we use a variant of our encoding) of
the rational number representing the rational link. It is well known
that a rational link cannot be recovered from its Jones polynomial;
infinitely many counterexamples were constructed by
Kanenobu~\cite{Kan}, and a more systematic approach was developed by
Lawrence and Rosenstein~\cite{La-Ro}. The structure of our formulas,
which involve $q$-analogues of the partial denominators, suggests the
possible existence of another link invariant of comparable complexity
to the Jones polynomial such that the two invariants together uniquely
determine a rational link.

\end{document}